\documentclass[review,hidelinks,onefignum,onetabnum]{siamart220329}
\nolinenumbers



\usepackage{graphicx}
\usepackage{amsmath}
\usepackage{amssymb}
\usepackage{epstopdf}
\usepackage{xcolor}
\usepackage{enumerate}
\usepackage{multirow}
\usepackage{bbm}
\usepackage[normalem]{ulem}
\usepackage{subfigure}

\usepackage{soul}

\newcommand{\blue}[1]{\textcolor{black}{#1}}

\newcommand{\eps}{\varepsilon}
\newcommand{\R}{\mathbb{R}}

\newcommand{\N}{\mathbb{N}}

\newcommand{\rme}{\mathrm{e}}

\newcommand{\spann}{\mathrm{span}}

\newcommand{\tO}{\tilde{\Omega} }

\newtheorem{Lemma}{Lemma}[section]
\newtheorem{Theorem}{Theorem}
\newtheorem*{Theorem**}{Theorem}
\newtheorem{Proposition}[Lemma]{Proposition}
\newtheorem{Remark}[Lemma]{Remark}
\newtheorem{Definition}[Lemma]{Definition}
\newtheorem{Hypothesis}[Lemma]{Hypothesis}

\setlength{\oddsidemargin}{-2.5mm}
\setlength{\evensidemargin}{-2.5mm}
\addtolength{\textwidth}{4cm}
\addtolength{\textheight}{2cm}
\setlength{\topmargin}{-1cm}
\setlength{\parskip}{4pt}
\setlength{\parindent}{0.5cm}

\setlength{\unitlength}{1in}

\DeclareGraphicsRule{.tif}{png}{.png}{`convert #1 `dirname #1`/`basename #1 .tif`.png}

%


\newsiamremark{remark}{Remark}
\newsiamremark{hypothesis}{Hypothesis}
\crefname{hypothesis}{Hypothesis}{Hypotheses}
\newsiamthm{claim}{Claim}

\headers{Analysis and Simulation of a Nonlocal Gray--Scott model}{L. Cappanera, G. Jaramillo, C. Ward}

\title{Analysis and Simulations of a Nonlocal Gray-Scott Model \thanks{
\funding{This work is supported in part by the National Science
  Foundation under grant DMS-1911742 (GJ).}}}

\author{Loic Cappanera\thanks{Department of Mathematics, University of Houston, Houston TX
  (\email{lmcappan@central.uh.edu}).}
\and Gabriela Jaramillo\footnotemark[2] \thanks{Corresponding author,
  (\email{gabriela@math.uh.edu}).}
\and Cory Ward\footnotemark[2] \thanks{(\email{crywrd@gmail.com}).}}

\usepackage{amsopn}

\begin{document}
\nolinenumbers

\maketitle

\begin{abstract}
The Gray-Scott model is a set of reaction-diffusion equations that describes chemical systems far from equilibrium.
Interest in this model stems from its ability to generate spatio-temporal structures, including
pulses, spots, stripes, and self-replicating patterns. We consider an extension of this model in which
the spread of the different chemicals  is assumed to 
be nonlocal, and can thus be represented by an integral operator.
In particular, we focus on the case of strictly positive, symmetric, $L^1$ convolution kernels that
have a finite second moment.
Modeling the equations on a finite interval,
we prove the existence of small-time weak solutions in the case of
nonlocal Dirichlet and Neumann boundary constraints.
We then use this result to develop a finite element numerical scheme 
that helps us explore the effects of nonlocal diffusion on the formation of pulse solutions.

\vspace*{0.1in}
{\small
{\bf Running head:} {Analysis and Simulations of a Nonlocal Gray-Scott Model}

{\bf Keywords:} pattern formation, nonlocal diffusion, integro-differential equations, finite element method.

{\bf AMS subject classification: }
45K05, 45G15, 46N20, 35Q92
}
\end{abstract}

\maketitle

\section{Introduction}

Since the 1950's chemical reactions far from  equilibrium
have been a source of interesting mathematical problems.
The beautiful and often intricate spatio-temporal structures that emerge  have led 
to numerous studies related to
 pattern formation,
 not only in the context of chemical reactions but also in other physical and biological systems, 
\cite{winfree2001, crosshohenberg, hoyle2006pattern, ball2009shapes}. 
In the past, most  mathematical and numerical analyses 
of these problems
have been done under the assumption that  
 they are well described by reaction-diffusion equations.
 However, when considering biological phenomena
 a nonlocal form of diffusion often provides a better description of the
 transport processes that are involved.
 This is the case of vegetation and population models, where the spread 
 of plant seeds and individuals is better represented by a convolution operator \cite{ bullock2017, humphries2010, nathan2003, nathan2006, nathan2008, nathan2012, pueyo2008, reynolds2007, sims2008}.
 This modeling choice then leads to  integro-differential equations, which are 
 often difficult to analyze and simulate. In this paper  we take on this challenge and consider 
a {\it nonlocal} Gray-Scott model.
 We prove the well-posedness of the equations, and construct a numerical scheme to solve them using  finite elements.

Our starting point is the {\it local} Gray-Scott model, which
 describes a general autocatalytic reaction involving two chemical species, $U$ and $V$,
of the form,
\[ U + 2 V \longrightarrow 3V, \qquad V \longrightarrow P.\]
This reaction is assumed to take place inside a well-stirred isothermal open-flow reactor, 
where species $U$ is constantly supplied, and the product $P$ is always removed \cite{gray1984}.
Experiments \cite{lee1993, lee1994},  numerical simulations \cite{pearson1993, reynolds1994, doelman1997, morgan2000}, and mathematical analysis \cite{reynolds1994, doelman1997, doelman1998, morgan2000, doelman2000, doelman2001, wei2001, morgan2004, kolokolnikov2005, mcgaugh2004}, then show that under these conditions
the system exhibits periodic solutions, including spots and stripes,
self-replicating patterns, and other interesting spatio-temporal structures.

Our interest in the Gray-Scott model also comes from its connection to the generalized Klausmeier model,
a system of reaction-diffusion equations used to describe vegetation patterns in dry-land ecosystems \cite{doelman2013}.
Indeed, in \cite{doelman2013} it is shown that  by a suitable rescaling of the variables
the steady states of the Gray-Scott equations can be mapped to those of the Klausmeier model. 
It is this relationship between both systems and the fact that seed dispersal is better
 modeled using a nonlocal form of diffusion \cite{ bullock2017,  nathan2003, nathan2006, nathan2008, nathan2012, pueyo2008}, that leads us to consider 
 the following nonlocal version of the Gray-Scott model
 \begin{equation}\label{e:gs_nonlocal}
\begin{split}
 u_t= & d_u K  u - uv^2 + f(1-u), \\
 v_t= & d_v K  v + uv^2 -(f+\kappa)v,
 \end{split}
\end{equation}

 which we assume is posed on a bounded interval, $\Omega = [-L,L]$.
In the equations, the variables $u$ and $v$ represent the concentrations of the chemical species,
$U$ and $V$, respectively, while the  linear operator, $K$, represents a nonlocal form of diffusion.
In addition, the constants $d_u,d_v$, denote the diffusive rates, and the parameters
$f$ and $\kappa$ represent the feed and kill rate, respectively.

Although the  use of nonlocal diffusion in a chemical model might seem at first artificial, one can justify this choice 
by looking at chemical reactions involving a fast component.
For instance, consider a system of the form
\begin{align*}
u_t= & (w-u) - uv^2 + f(1-u), \\
 \eps w_t &= (\Delta -1) w + u
 \end{align*}
with $\eps$ representing a small parameter.
In the limit as $\eps \to 0$,  one can adiabatically eliminate the fast variable.
This means using the second equation to solve for $w$ in terms of $u$ via the Green's function, $G$, for the operator $(\Delta -1)$. When this expression is then inserted back into the first equation, the result is  the \blue{integral operator}, 
\[ (w-u)  = G \ast u -u ,\]
which can be interpreted as a nonlocal form of diffusion.
In particular, the above calculations tell us that 
because the fast component diffuses more quickly,
it is able to interact in a meaningful way with elements that are far away, thus creating long-range 
dispersal effects.

To study the role that  nonlocal diffusion has in shaping solutions,
in this paper we first 
  establish the  weak formulation of the above nonlocal Gray-Scott model and prove that 
these equations are well-posed. 
We then use this information to develop a numerical scheme to solve the evolution problem. 
We use finite elements to discretize the nonlocal operator, and a BDF scheme to 
perform the time stepping. 
Finally, we simulate the equations in order to determine how nonlocal diffusion affects single-pulse solutions.

While previous numerical studies of  nonlocal Gray-Scott models assume that long-range 
dispersal is described by a fractional Laplacian 
\cite{wang2019, mostafa2019, liu2020, owolabi2020, mostafa2022, han2022}, 
in this article we focus on 
\blue{integral} operators described by 
spatially-extended, positive, and symmetric $L^1$ convolution kernels.
We also assume nonlocal homogeneous Dirichlet and Neumann boundary constraints following the 
approach taken by Du et al in \cite{du2012, du2013}.
 In the Dirichlet case 
this means that the values of $u$ and $v$ are set to zero outside the computational domain, $\Omega$,
while in the Neumann case  the conditions 
 $K  u = 0,$ and $K  v =0$,  are enforced in $\Omega^c$. 
Our mathematical analysis and formulation of the numerical scheme is also influenced by 
the theory developed in these last two references. However, while these works
focus on kernels with compact support, here we remove this assumption when
considering nonlocal Dirichlet constraints. We do reinstate this hypothesis when studying
the nonlocal Neumann problem. 
 
To prove the well-posedness of the equations, and thus obtain consistency of the numerical scheme,
 we follow the Galerkin approach.
One of the main difficulties that we face when pursuing this endeavor comes from 
the nonlocal operator. Unlike the Laplacian, our  nonlocal map, $K$,  does not define a bilinear form with domain
$H^1(\Omega) \times H^1(\Omega)$,
but with domain $L^2(\Omega) \times L^2(\Omega)$ instead.
This forces us to pick solutions in a space that does not have enough regularity. 
As a result, we are not able to use Aubin's compactness theorem to 
prove that the nonlinear terms in the equations converge weakly to the appropriate limit.
To get around this issue, we extend the nonlocal Gray-Scott system to include
complementary equations for two additional variables, $u_x$ and  $v_x$.
We then prove that at the level of the Galerkin approximation, these variables correspond to the
derivatives of the approximations $(u^m,v^m)$, and thus obtain the needed regularity to carry out the rest of the proof.
The result is the following theorem, which gives conditions on the parameters and the initial data 
that guarantee the existence of small-time weak solutions to the nonlocal Gray-Scott model.

\begin{Theorem}\label{t:existence}
Let $u_0,v_0,\partial_x u_0,\partial_x v_0$ be in $L^2(\Omega)$. Then,
there exists positive constants $C_1,C_2$ and  $T$, such that if
\[ \| u_0\|_{L^2(\Omega)} + \| v_0\|_{L^2(\Omega)} <C_1,\quad \| \partial_x u_0\|_{L^2(\Omega)} + \| \partial_x v_0\|_{L^2(\Omega)} <C_2,\]
 the system of equations
\begin{align} 
\langle u_t, \phi \rangle_{(V' \times V)} + d_u B_{D,N} [ u, \phi] & =  ( - uv^2 + f(1-u), \phi)_{L^2(\Omega)},\\
\langle v_t, \phi \rangle_{(V' \times V)} + d_v B_{D,N} [ v, \phi] & =  (  uv^2 - (f+\kappa)v), \phi)_{L^2(\Omega)},
\end{align}
has a unique weak solution
 $(u,v) \in [L^2(0,T, H^1(\Omega) )] \times [L^2(0,T, H^1(\Omega)] $ 
valid on the time interval $[0,T]$,
and satisfying $u(x,0) = u_0$ and $v(x,0)=v_0$.
\end{Theorem}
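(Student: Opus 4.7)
The plan is to construct solutions by a Galerkin scheme, following the route indicated in the introduction, with the essential twist that we carry along auxiliary equations for the spatial derivatives of the approximants in order to regain enough regularity to pass to the limit in the nonlinearities. First I fix an $L^2(\Omega)$-orthonormal basis $\{\phi_k\}$ compatible with the chosen boundary constraints and stable under differentiation, in the sense that $\partial_x\phi_k$ lies in the span of $\{\phi_j\}_j$ (a trigonometric basis adapted to the Dirichlet or Neumann condition suffices). Setting $u^m = \sum_{k=1}^m \alpha_k(t)\phi_k$ and $v^m = \sum_{k=1}^m \beta_k(t)\phi_k$, projection of the weak formulation onto $V_m = \mathrm{span}\{\phi_1,\ldots,\phi_m\}$ yields a system of ODEs for the coefficients with polynomial right-hand side, and Picard--Lindel\"of provides a local-in-time Galerkin pair $(u^m,v^m)$.

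The heart of the argument is to derive energy estimates uniform in $m$. Testing against $u^m$ and $v^m$ produces an $L^2$ bound together with the nonnegative contributions $B_{D,N}[u^m,u^m]$ and $B_{D,N}[v^m,v^m]$. Because $K$ is a convolution operator it commutes with $\partial_x$, and the choice of basis guarantees that $\partial_x u^m$ and $\partial_x v^m$ satisfy the system obtained by formally differentiating the Gray--Scott equations. Testing those differentiated equations against $\partial_x u^m$ and $\partial_x v^m$, expanding $(uv^2)_x = u_x v^2 + 2 u v v_x$, and using the one-dimensional embedding $H^1(\Omega)\hookrightarrow L^\infty(\Omega)$ to absorb the cubic terms, produces a Riccati-type differential inequality
\begin{equation*}
\frac{d}{dt} E(t) \le c_1 E(t)^2 + c_2 E(t) + c_3, \qquad E(t) := \|u^m(t)\|_{H^1(\Omega)}^2 + \|v^m(t)\|_{H^1(\Omega)}^2.
\end{equation*}
The smallness hypotheses on the initial data are precisely what is needed so that the comparison ODE does not blow up on a short, $m$-independent interval $[0,T]$, and this yields uniform $L^\infty(0,T;H^1(\Omega))$ bounds on $(u^m,v^m)$. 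Reading off $u^m_t$ and $v^m_t$ from the equations then gives a uniform $L^2(0,T;V')$ bound on the time derivatives.

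With these bounds in hand, Banach--Alaoglu delivers weak-$\ast$ convergent subsequences. The main obstacle, namely passage to the limit in the cubic nonlinearities $u^m (v^m)^2$, is resolved by Aubin--Lions applied to the chain $H^1(\Omega)\hookrightarrow C(\overline{\Omega})\hookrightarrow V'$, the first embedding being compact in one space dimension; this gives strong convergence of $(u^m,v^m)$ in $L^2(0,T;C(\overline{\Omega}))$ and hence convergence of $u^m(v^m)^2$ to $uv^2$ in $L^2(0,T;L^2(\Omega))$. Identifying the resulting weak limit as a solution of the weak formulation and verifying the initial condition are then routine. Uniqueness is obtained by the standard subtraction argument: testing the equations for the difference of two solutions against that difference and using the $L^\infty$ control coming from the $H^1$ bound yields a Gr\"onwall inequality that forces the two solutions to coincide on $[0,T]$.
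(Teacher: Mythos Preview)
Your overall architecture matches the paper's: a Galerkin scheme, an augmented system carrying the spatial derivatives so as to recover $H^1$ control, Aubin--Lions compactness, and then passage to the limit in the cubic term. The two approaches differ in how the a~priori bounds are obtained: the paper first truncates the nonlinearity by a smooth cutoff $\sigma$ to obtain a globally Lipschitz ODE (hence a uniform-in-$m$ existence time), derives separate $L^2$ bounds on $(u^m,v^m)$ by exploiting the cancellation in $u+v$, bounds $(u^m_x,v^m_x)$ in $L^2$, and finally removes the truncation via the Sobolev embedding; you instead go straight for a Riccati inequality on the full $H^1$ energy. Both routes are legitimate, and yours is arguably more streamlined, though the paper's truncation makes the uniform-in-$m$ lifespan explicit rather than implicit in a comparison ODE.

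There is, however, a genuine gap in the Neumann case. Your argument hinges on the assertion that ``$K$ is a convolution operator so it commutes with $\partial_x$,'' and hence the differentiated system again involves $B_N[\partial_x u,\cdot]$. This fails when the kernel has compact support and the integration is over the bounded set $\tilde\Omega=\Omega\cup\Omega_0$: a direct computation gives, for $x\in\Omega$,
\[
\partial_x(Ku)(x)=\int_{\tilde\Omega}(u(y)-u(x))\,\partial_x\gamma_R(|x-y|)\,dy \;-\;\Gamma\,\partial_x u(x),
\]
which is \emph{not} $K(\partial_x u)$. The paper therefore writes the auxiliary equations for $u_x,v_x$ in the Neumann case with the pair $(G u,\Gamma u_x)$ in place of $B_N[u_x,\cdot]$, where $Gu=\int_{\tilde\Omega}(u(y)-u(x))\partial_x\gamma_R\,dy$, and carries the corresponding extra term through the energy estimate (this is why Hypothesis~\ref{h:kernelN} requires $\gamma\in W^{1,1}$). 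Relatedly, in the Neumann setting the basis elements are obtained by solving an exterior problem on $\Omega_0$, and that extension does \emph{not} commute with differentiation; so the ``stability under differentiation'' you invoke only holds on $\Omega$, not on $\tilde\Omega$, and one must check carefully (as in the paper's Lemma establishing $u_x^m=\partial_x u^m$ on $\Omega$) that this is enough. Once you replace the commutation claim by the correct identity and adjust the derivative-level energy estimate accordingly, the rest of your outline goes through.
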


In the theorem the letter $V$ denotes the space of functions being considered, which can vary
depending on whether we are solving the Dirichlet problem, or the Neumann problem. 
In either case, the space $V$ is equivalent to $L^2(\Omega)$, as further explained in Section~\ref{s:preliminaries}. The bilinear form $B_{D,N}$ represents the action of the operator $K$, and it is also defined in Section~\ref{s:preliminaries}.
 Here, and throughout the paper, $\Omega$ always represents the domain of interest where the equations are
defined, while $\Omega_0$ denotes the region where the nonlocal boundary constraints are imposed. We then 
let 
$\tilde{\Omega} = \Omega \cup \Omega_0$ and refer to this region as the extended domain.

\begin{Remark}
A couple of remarks are in order.

\begin{enumerate}[1.]

\item Because nonlocal problems do not satisfy a particular type of `local' boundary condition on $\partial \Omega$,
 to construct our Galerkin approximations we use a special set of basis functions. This basis includes elements in $L^2(\Omega)$
 that satisfy homogenous Dirichlet boundary conditions, as well as elements that satisfy  Neumann and periodic boundary conditions on $\partial \Omega$.
Thus, this choice of basis functions allows for the existence of solutions that are discontinuous across the boundary,
so that the results presented here are consistent with previous work by Du and Yin \cite{du2019}. 

\item In the Neumann case, one can extend  basis elements from functions defined on $\Omega$ to functions defined on $\tilde{\Omega} = \Omega \cup \Omega_0$. This is done by
treating the Neumann constraints as an exterior problem, as is shown in 
\cite[Proposition 2.2]{burkovska2021}, (see also comments following Proposition \ref{p:burkovska} below and Lemma \ref{l:exteriorproblem}). 
This result also allows us to consider initial data, $(u_0,v_0)$, that is defined only on $\Omega$, even in the Neumann case, since reference
\cite[Proposition 2.2]{burkovska2021} shows that this boundary constraint provides enough information to determine the form of these functions on $\Omega_0$.

\item Finally, Theorem \ref{t:existence} shows that if we start initial data that is continuous on the computational domain (i.e. in $H^1(\Omega)$),  then we can expect the weak solution to also be continuous on this region.
However, we make no claims as to the regularity of the solution on $\Omega_0$. In particular, 
our result allows for solutions that are discontinuous across the boundary $\partial \Omega$.

\end{enumerate}
\end{Remark}

{\bf Outline:}
We finish this short introduction by establishing the notation that will be used throughout the paper and 
stating our main assumptions.
In Section~\ref{s:preliminaries} we provide a summary of preliminary results that are known in the literature.
These are then used in Section~\ref{s:existence} to prove the well-posedness of the nonlocal Gray-Scott model.
Section~\ref{sec:num} contains a detailed description of our numerical scheme along with 
the results of non-trivial numerical experiments that prove
the algorithm has first order convergence. In this section we also use our algorithm to explore the effects of nonlocal diffusion on the formation of single pulse solutions. We conclude the paper with a discussion and future directions in Section~\ref{s:discussion}.

\subsection{Model Equations}

We consider the nonlocal Gray-Scott model, 
\begin{align}\label{e:GS}
 u_t= & d_u \blue{K u} - uv^2 + f(1-u)\\
 \nonumber
 v_t= & d_v \blue{K v} + uv^2 -(f+\kappa)v
\end{align}
 posed on a bounded interval, $\Omega = [-L,L]$.
Here, the operator $K$ represents a nonlocal form of diffusion and is given by
\[ \blue{K u} = \int_{\tilde{\Omega}} (u(y) - u(x)) \gamma(x,y) \; dy,\]
where the domain of integration, $\Omega \subset \tilde{\Omega} \subseteq \R$, 
depends on the type of boundary constraints being considered.
In this paper we look at nonlocal Dirichlet and Neumann boundary constraints. 
We also restrict our attention to convolution kernels $\gamma$ satisfying 
Hypothesis \blue{\ref{h:kernelD} or Hypothesis \ref{h:kernelN}},  given bellow.  
As in the case of the `diffusive' Gray-Scott model, the constants $d_u,d_v$ are assumed to be small and represent the
rate of diffusion of the variables $u$ and $v$, respectively, while the coefficients $f,k$ are assumed to be constant.

\begin{Hypothesis}\label{h:kernelD}
The convolution kernel, $\gamma(z) = \gamma(|z|)$, is a positive radial function in $H^1(\R) \cap W^{1,1}(\R)$,
which is non-zero on all of $\R$ and satisfies:
 \begin{itemize}
 \item $\gamma (x,y) = \gamma(|x-y|) = \gamma (|z|)$, and
 \item $\int_\R z^2 \gamma(|z|) \;dz < \infty$.
 \end{itemize}
\end{Hypothesis}

Examples of kernels that meet Hypothesis \ref{h:kernelD} are:
 \begin{itemize}
 \item $\gamma(x) = \exp( -x^2)$,
 \item $ \gamma(x) = \exp(-|x|)$.
 \end{itemize}
 
\begin{Hypothesis}\label{h:kernelN}
The convolution kernel, $\gamma(z) = \gamma(|z|)$, is a positive radial function in $H^1(\R) \cap W^{1,1}(\R)$, with compact support  and satisfying: 
 \begin{itemize}
 \item $\gamma (x,y) = \gamma(|x-y|) = \gamma (|z|)$, and
 \item $\int_\R z^2 \gamma(|z|) \;dz < \infty$.
 \end{itemize}
\end{Hypothesis}

\subsection{ Notation}\label{sub:notation}
As already mentioned, we consider two types of boundary conditions,
\begin{enumerate}[I.]
\setlength \itemsep{2ex}
\item {\bf Nonlocal Dirichlet:} 
\begin{align} \label{e:D}
 u & = 0 \hspace{15ex} \mbox{for all} \quad x \in \R\setminus \Omega,\\
\nonumber
 \tilde{\Omega} & = \R.
\end{align}
In this case 
the operator $K$ is identified with a convolution kernel, $\gamma$,  strictly positive on all of $\R$ and satisfying Hypothesis \ref{h:kernelD}. 
We also assume the unknown variables $u,v$ belong to the space $V_D$, defined as
\[  V_D = \{ u \in L^2(\R) : u(x) = 0 \quad \forall x \in \Omega^c\}\]
and endowed with the $L^2(\Omega)$ norm.

\item {\bf Nonlocal Neumann:} 
\begin{align} \label{e:N} 
\blue{K u} & = 0 \hspace{15ex} \mbox{for  all} \quad x \in \Omega_0, \\
\nonumber
\tilde{\Omega} & = \Omega \cup \Omega_0.
\end{align}
In this case we assume the convolution kernel, $\gamma$, has compact support and satisfies Hypothesis \ref{h:kernelN}. 
 For simplicity, we assume $\gamma  = \chi_R \tilde{\gamma}$, where $ \tilde{\gamma}$ is a positive radial function in $W^{1,1}(\R) \cap H^1(\R)$,
 and $\chi_R = \chi_R(|x|) \in C^\infty_0(\R)$ is a cut off function with support in the interval $[-R,R]$. Consequently,
 $\Omega_0= [-L-R,-L] \cup [L,L+R]$.
 In addition, we assume that the unknowns $u,v$ belong to the space $V_N$, which we define as
\[ V_N = \{ u \in L^2(\tO): \blue{K u}(x) = 0 \quad x \in \Omega_0\}.\]
We equip this space with the norm
$ \| u\|^2_{V_N} = \| u\|^2_{L^2(\Omega)} + |u|^2_{V_N}$, where
\[  |u|^2_{V_N} = \frac{1}{2} \int_{\tO} \int_{\tO} (u(y) - u(x))^2 \gamma(x,y) \;dy \;dx. \]
In Section \ref{s:preliminaries} we will also refer to the closed subspace 
\[\tilde{V}_N = \{  u \in L^2(\tO): \blue{K u}_{|\Omega_0} = 0, \quad \int_{\tO} u = 0 \}.\]

 \end{enumerate}

\section{Preliminary Results}\label{s:preliminaries}

\subsection{Bilinear Forms}
In this section we consider the bilinear forms associated with the operator $K$, and summarize
some of their properties.

We start by defining the two bilinear forms we will be working with.
\begin{Definition}\label{d:BD}
We define the bilinear form $B_D: V_D \times V_D \longrightarrow \R$ by the expression
\[ B_D[u,\phi] = \frac{1}{2} \int_\R \int_\R (u(y) - u(x)) \gamma(x,y) (\phi(y) - \phi(x)) \;dy\;dx,\]
where the kernel $\gamma $ satisfies Hypothesis \ref{h:kernelD}.
\end{Definition}

\begin{Definition}\label{d:BN}
We define the bilinear $B_N: V_N \times V_N \longrightarrow \R$ by the expression
\[ B_N[u,\phi] = \frac{1}{2} \int_{\tO} \int_{\tO} (u(y) - u(x)) \gamma_R(x,y) (\phi(y) - \phi(x)) \;dy\;dx,\]
where $\gamma_R$  satisfies Hypothesis \ref{h:kernelN} and  thus has compact support.
\end{Definition}
Notice that thanks to our definition of the space $V_D$ and $V_N$ we have that
\begin{align*}
 B_D[u, \phi] = & - (\blue{K u}, \phi)_{L^2(\Omega)},\\
 B_N[u, \phi] = & - (\blue{K u}, \phi)_{L^2(\Omega)}.
\end{align*}

The following two lemmas show that the bilinear forms $B_D$ and $B_N$ are bounded and
coercive.

\begin{Lemma}
Let $B_D: V_D \times V_D \longrightarrow \R$ be defined as in \ref{d:BD}, then there exists positive constants 
$C$ and $\beta$, such that
\begin{enumerate}[i)]
\item $B_D[u, \phi ] \leq C \| u\|_{L^2(\Omega)} \| \phi\|_{L^2(\Omega)} $,
\item $B_D[u,u] \geq \beta \| u\|^2_{L^2(\Omega)}$.
\end{enumerate}
\end{Lemma}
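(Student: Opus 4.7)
For the boundedness estimate (i), the plan is to expand the product $(u(y)-u(x))(\phi(y)-\phi(x))$ into four terms and exploit the symmetry $\gamma(x,y)=\gamma(|x-y|)$ to show
\[
B_D[u,\phi] \;=\; c \int_{\R} u(x)\phi(x)\,dx \;-\; (\gamma \ast u,\phi)_{L^2(\R)},
\]
where $c := \|\gamma\|_{L^1(\R)}$. Since $u,\phi \in V_D$ vanish outside $\Omega$, both terms reduce to integrals over $\Omega$. Applying Cauchy--Schwarz to the first term and Young's convolution inequality $\|\gamma \ast u\|_{L^2(\R)} \le c \|u\|_{L^2(\R)} = c\|u\|_{L^2(\Omega)}$ to the second then gives $|B_D[u,\phi]| \le 2c\,\|u\|_{L^2(\Omega)}\|\phi\|_{L^2(\Omega)}$.

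For the coercivity estimate (ii), the plan is to use $u|_{\Omega^c}=0$ to split $B_D[u,u]$ into an interior piece and a boundary piece:
\[
B_D[u,u] \;=\; \tfrac{1}{2}\!\int_\Omega\!\int_\Omega (u(y)-u(x))^2 \gamma(|x-y|)\,dy\,dx \;+\; \int_\Omega u(x)^2\, G(x)\,dx,
\]
where, after a change of variables,
\[
G(x) \;:=\; \int_{\Omega^c} \gamma(|x-y|)\,dy \;=\; \int_{L+x}^{\infty}\!\gamma(z)\,dz \;+\; \int_{L-x}^{\infty}\!\gamma(z)\,dz.
\]
The first double integral is nonnegative, so it may be discarded. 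For the second term I would argue that $G$ is continuous on the compact set $\bar\Omega = [-L,L]$ because $\gamma \in L^1(\R)$, and that $G(x)$ is strictly positive on $\bar\Omega$ because $\gamma > 0$ on all of $\R$ and $L\pm x$ is finite. Compactness then yields $\beta := \min_{x \in \bar\Omega} G(x) > 0$ and $B_D[u,u] \ge \beta \|u\|^2_{L^2(\Omega)}$.

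The main obstacle, though a mild one, is conceptual rather than technical: for a convolution operator with an $L^1$ kernel the Fourier symbol $c - \hat\gamma(\xi)$ vanishes at $\xi=0$, so there is no coercivity on $L^2(\R)$ for compactly supported test functions absent the Dirichlet condition. The strict positivity of $\gamma$ everywhere on $\R$ (a standing assumption in Hypothesis~\ref{h:kernelD}) is precisely what makes $G(x)$ strictly positive on $\bar\Omega$ and hence bounded below, converting the zero mode into a genuine zeroth-order positive coefficient. Without the assumption that $\gamma$ does not vanish on $\R$ (e.g., if $\gamma$ were compactly supported), the argument would instead require some interior Poincar\'e-type inequality for the bilinear form.
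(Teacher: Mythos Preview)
Your proof is correct and takes a genuinely different, more elementary route than the paper's.

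For (i), the paper observes that convolution with the $L^2$ kernel $\gamma$ is Hilbert--Schmidt, so $-K=\Gamma\,\mathrm{Id}-\gamma\ast(\cdot)$ is a bounded operator on $L^2(\Omega)$, and then applies Cauchy--Schwarz to $B_D[u,\phi]=-(Ku,\phi)_{L^2(\Omega)}$. Your expansion of the bilinear form followed by Young's inequality reaches the same conclusion using only $\gamma\in L^1(\R)$, which is a slightly weaker hypothesis.

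For (ii), the paper argues abstractly: since $-K$ is a compact perturbation of a positive multiple of the identity and has trivial kernel on $V_D$ (this is where $\gamma>0$ on all of $\R$ enters), Fredholm theory makes $-K$ invertible with bounded inverse; a contradiction argument then converts the bound $\|u\|\le C\|Ku\|$ into coercivity of $B_D$. Your decomposition into the nonnegative interior double integral plus the weighted term $\int_\Omega u^2\,G$ is direct and produces the explicit constant $\beta=\min_{\bar\Omega}G$. Both proofs ultimately hinge on the same hypothesis that $\gamma>0$ everywhere on $\R$: the paper uses it to kill the nullspace of $K$, while you use it to ensure $G>0$ on the compact set $\bar\Omega$. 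Your argument is more constructive and transparent; the paper's functional-analytic route is less explicit but would transfer more readily to settings where a pointwise lower bound on $G$ is not as easy to extract.
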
 
\begin{proof}
Because the kernel $\gamma(x,y) = \gamma(|x-y|)$ is in $L^2(\Omega \times \Omega)$, the convolution
with $\gamma$ is a Hilbert-Schmidt operator. As a result the map
$- K: L^2(\Omega) \longrightarrow L^2(\Omega)$, written as
$$ -K u =  \Gamma u - \gamma \ast u, \qquad \Gamma = \int_\R \gamma (z) \;dz,$$
is a compact perturbation of a constant multiple of the
 identity, and it is therefore bounded.
 Since $B_D[u, \phi] = - (\blue{K u}, \phi)_{L^2(\Omega)}$,  we then obtain the boundedness of the bilinear form using Cauchy-Schwartz.
 
Keeping in mind that the kernel satisfies $\gamma(x,y) >0 $ for all $ (x,y) \in \R^2$, and using the notation
 \[ -K u = \int_\R (u(x) -u(y) ) \gamma(x,y)\;dy\]
 one can prove that the only element in the null-space of the operator $K$ is the trivial solution. 
Indeed, if $Ku =0$ for some $u \in V_D$ then, $-(K \ast u, u)_{L^2(\R)} =0$, and since
 \[ -(K \ast u, u)_{L^2(\R)} = \int_\R \int_\R (u(x)-u(y) )^2 \gamma(x,y) \;dy \;dx =0\] 
one must have $(u(x) - u(y))=0$ for a.e. $(x,y) \in \R^2$. 
 It then follows that $u$ is a constant in $\R$ and since $u =0 $ on $\Omega^c$, we arrive at $u \equiv 0$.
The invertibility of the operator $-K$ then shows that the bilinear form $B_D$ is coercive. 
To see this, suppose to the contrary that $B_D$ is not coercive and thus,
there exists a sequence, $\{ u_n\}$, of non-trivial elements in $ V_D$ with norm $\|u_n \|_{L^2(\Omega)} =1$ such that $B_D[u_n, u_n] \rightarrow 0$. As shown below this implies that $K(u_n) \rightarrow 0$ in the $L^2$-norm. Since $-K$ is invertible we then obtain that
\[1= \|u_n\|_{L^2(\Omega)} \leq C \|K(u_n) \|_{L^2(\Omega)} \to 0,\]
reaching a contradiction.

To show $K(u_n) \rightarrow 0$,  notice that
\begin{align*} B_D[u,\phi] = &
 \frac{1}{2} \int_{\R \times \R} (u(y) - u(x)) \gamma(x,y) (\phi(y) - \phi(x)) \; dA\\
 & \leq   \frac{1}{2} \left( \int_{\R \times \R} (u(y) - u(x))^2 \gamma(x,y)  \; dA \right)^{1/2}
 \left( \int_{\R \times \R}  \gamma(x,y) (\phi(y) - \phi(x))^2 \; dA \right)^{1/2}\\
 & \leq \frac{1}{2} \Big( B_D[u,u]\Big)^{1/2}  \Big( B_D[\phi,\phi]\Big)^{1/2} 
 \end{align*}
 where the second line follows by an application of the Cauchy-Schwartz inequality.
Letting  $u = u_n$ and taking $\phi = -K(u_n)$, we see that
\begin{align*}
(K(u_n),K(u_n) )_{L^2(\Omega)} = & B_D[u_n,-K(u_n)] \\
\| K(u_n)\|^2_{L^2(\Omega)}& \leq  \frac{1}{2} \Big( B_D[u_n,u_n]\Big)^{1/2}  \Big( B_D[K(u_n),K(u_n)]\Big)^{1/2} \\
\| K(u_n)\|^2_{L^2(\Omega)}& \leq \frac{1}{2} \Big( B_D[u_n,u_n]\Big)^{1/2} \Big( C \| K(u_n)\|_{L^2(\Omega)}\| K(u_n)\|_{L^2(\Omega)} \Big)^{1/2}\\
\| K(u_n)\|_{L^2(\Omega)} & \leq \frac{\sqrt{C}}{2} \Big( B_D[u_n,u_n]\Big)^{1/2}, 
\end{align*}
where in the third line we used the boundedness of the bilinear form. The result then follows from our assumption that 
 $B_D[u_n,u_n] \rightarrow 0$ as $n\to \infty$.
 \end{proof}

 In Lemma \ref{l:BN} we prove that the bilinear form $B_N[u,v]$ is bounded and coercive. 
 To do this we employ the following proposition, which appears in \cite[Proposition 2.2]{burkovska2021}.
 
 \begin{Proposition}\label{p:burkovska}
 The space $(V_N, \| \cdot\|_{V_N})$ is a Hilbert space that is equivalent to $L^2(\Omega)$.
 \end{Proposition}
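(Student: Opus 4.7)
The plan is to reduce the claim to the invertibility of a Fredholm operator of the second kind on $L^2(\Omega_o)$. Specifically, I would show that the restriction map $R : V_N \to L^2(\Omega)$, $Ru := u|_\Omega$, is a bounded bijection with bounded inverse; the equivalence of norms, the Hilbert space structure (the inner product is the obvious polarization of $\|\cdot\|_{V_N}^2$), and the completeness of $V_N$ all follow by transport from $L^2(\Omega)$. Boundedness of $R$ is immediate since $\|u\|_{L^2(\Omega)} \leq \|u\|_{V_N}$, so the substance lies in surjectivity, injectivity, and the bounded inverse estimate.

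The key reformulation is the \emph{exterior problem}: for $u \in L^2(\tilde{\Omega})$ with $u|_\Omega = u_0$ prescribed, the Neumann condition $Ku(x) = 0$ on $\Omega_o$ is equivalent to the integral equation
$$(I - T)(u|_{\Omega_o}) = g \qquad \text{on } \Omega_o,$$
where $\Gamma(x) := \int_{\tilde{\Omega}} \gamma(x,y)\,dy$, $(Tw)(x) := \Gamma(x)^{-1}\int_{\Omega_o} w(y)\gamma(x,y)\,dy$, and $g(x) := \Gamma(x)^{-1}\int_{\Omega} u_0(y)\gamma(x,y)\,dy$. Because every $x \in \Omega_o$ lies within distance $R$ of $\Omega$ and $\gamma$ is strictly positive on its support (by Hypothesis~\ref{h:kernelN}), $\Gamma$ is bounded below by a positive constant on $\Omega_o$ and the kernel $\gamma(x,y)/\Gamma(x)$ lies in $L^2(\Omega_o \times \Omega_o)$. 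Hence $T$ is a Hilbert--Schmidt, and in particular compact, operator on $L^2(\Omega_o)$, and $g \in L^2(\Omega_o)$ with $\|g\|_{L^2(\Omega_o)} \leq C \|u_0\|_{L^2(\Omega)}$. By the Fredholm alternative it suffices to establish $\ker(I - T) = \{0\}$ in order to obtain both the uniqueness and existence of the extension, together with a bounded inverse.

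The main obstacle is precisely this kernel-triviality step, and it is where the structural features of $V_N$ enter. Given $w \in L^2(\Omega_o)$ with $(I-T)w = 0$, extend $w$ by zero on $\Omega$; the resulting function (still denoted $w$) lies in $V_N$ with $w|_\Omega = 0$, and satisfies $Kw(x) = 0$ for a.e.\ $x \in \Omega_o$. Multiplying by $w(x)$, integrating over $\Omega_o$, symmetrizing in $x \leftrightarrow y$ using the symmetry of $\gamma$, and using that $w$ vanishes on $\Omega$, the routine algebra yields the identity
$$\frac{1}{2}\int_{\Omega_o}\!\int_{\Omega_o} (w(y) - w(x))^2 \gamma(x,y)\,dy\,dx \;+\; \int_{\Omega_o} w(x)^2 \left(\int_{\Omega} \gamma(x,y)\,dy\right) dx = 0.$$
Both summands are non-negative, so each vanishes. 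Since $\int_{\Omega} \gamma(x,y)\,dy > 0$ for a.e.\ $x \in \Omega_o$ (again by the geometric fact that $\Omega_o$ sits within the interaction radius of $\Omega$ together with the strict positivity of $\gamma$ on its support), the second integral forces $w \equiv 0$ a.e.\ on $\Omega_o$, establishing injectivity of $I-T$.

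With $I - T$ invertible on $L^2(\Omega_o)$, the extension operator $E := R^{-1}: L^2(\Omega) \to L^2(\tilde{\Omega})$ is well-defined and satisfies $\|Eu_0\|_{L^2(\tilde{\Omega})} \leq C \|u_0\|_{L^2(\Omega)}$. The elementary bound
$$|u|_{V_N}^2 \;\leq\; \int_{\tilde{\Omega}}\!\int_{\tilde{\Omega}} \bigl(u(x)^2 + u(y)^2\bigr)\gamma(x,y)\,dy\,dx \;\leq\; 2\|\Gamma\|_{L^\infty(\tilde{\Omega})}\,\|u\|_{L^2(\tilde{\Omega})}^2$$
(valid since $\gamma \in L^1$ with compact support ensures $\|\Gamma\|_{L^\infty} < \infty$) then yields $\|Eu_0\|_{V_N} \leq C' \|u_0\|_{L^2(\Omega)}$, and together with the trivial reverse inequality this gives the norm equivalence. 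Completeness of $V_N$ is inherited from $L^2(\Omega)$ via the bi-Lipschitz map $R$, and because the norm is induced by an inner product, $V_N$ is a Hilbert space.
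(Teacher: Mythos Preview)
Your proof is correct and complete. Both you and the paper reduce the claim to the unique solvability of the exterior problem ($Ku=0$ on $\Omega_o$, $u=g$ on $\Omega$), but the routes diverge at that point. The paper (via Lemma~\ref{l:exteriorproblem} and the cited reference) passes to the weak formulation $B_N[\hat u,v]=(K\tilde g,v)_{L^2(\Omega_o)}$ on the space $V_N^0$ and invokes Lax--Milgram, with coercivity supplied by the nonlocal Poincar\'e inequality of Mengesha--Du. You instead rewrite the constraint as a second-kind Fredholm equation $(I-T)w=g$ on $L^2(\Omega_o)$, establish compactness of $T$ via the Hilbert--Schmidt criterion, and dispatch injectivity by a direct energy identity. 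That identity is exactly the statement $B_N[w,w]=0$ for $w\in V_N^0$, so you are in effect proving the qualitative kernel-triviality case of the Poincar\'e inequality by hand rather than citing it. Your approach is thus more self-contained and needs only the weaker injectivity statement (Fredholm supplies the quantitative bound), while the paper's Lax--Milgram route is the standard framework in the nonlocal literature and reuses machinery already needed elsewhere. One small caution: your assertion that $\int_\Omega\gamma(x,y)\,dy>0$ for a.e.\ $x\in\Omega_o$ relies on $\gamma>0$ on the open interval $(-R,R)$, which is implicit in the paper's setup ($\gamma=\chi_R\tilde\gamma$ with $\tilde\gamma$ positive) but worth stating explicitly.
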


Heuristically, the results of the above proposition follow from the Neumann boundary constraint,
which defines an exterior problem:
\begin{align*} 
Ku =0 & \qquad x \in \Omega_0\\
u = g & \qquad x \in \Omega.
\end{align*}
 In \cite{burkovska2021} the authors prove that this nonlocal Dirichlet problem has a unique solution. Thus, the values of $u$ in the set $\Omega_0$ are determined by the values of $u$ in  the domain $\Omega$, leading to the inequality   $\| u\|_{V_N} \leq C \|u\|_{L^2(\Omega)}$.
As a result, the two norms, $\| \cdot \|_{V_N} $ and $\|\cdot\|_{L^2(\Omega)}$, are equivalent, and the result of Proposition \ref{p:burkovska} then follow.
For reference we provide a detailed definition, and a proof of the existence of solutions to the exterior problem in Subsection \ref{ss:basis}, Lemma \ref{l:exteriorproblem}

Notice that as a  consequence of Proposition \ref{p:burkovska} we also obtain that the embeddings 
$V_N \subset L^2(\Omega) \subset V'_N$ are dense, and therefore these spaces define 
a Gelfand triple. Here $V'_N$ denotes the dual to $V_N$.

\begin{Lemma}\label{l:BN}
Let $B_N: \tilde{V}_N \times \tilde{V}_N \longrightarrow \R$ be defined as in \ref{d:BN}, then there exists positive constants 
$C_1,C_2$ and  $\beta$, such that
\begin{enumerate}[i)]
\item $B_N[u, \phi ] \leq C_1 \| u\|_{L^2(\Omega)} \| \phi\|_{L^2(\Omega)} $,
 \item $B_N[u, \phi ] \leq C_2 \| u\|_{V_N} \| \phi\|_{V_N} $,
\item $B_N[u,u] \geq \beta \| u\|^2_{L^2(\Omega)}$.
\end{enumerate}
\end{Lemma}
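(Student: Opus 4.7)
The plan is to treat the three items separately, modelling parts (i) and (iii) on the argument just given for $B_D$ in the preceding lemma and handling (ii) by a direct estimate.

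Part (ii) is the most immediate: I would apply Cauchy--Schwartz inside the double integral defining $B_N[u,\phi]$ by splitting $\gamma(x,y) = \gamma(x,y)^{1/2}\gamma(x,y)^{1/2}$, which yields $B_N[u,\phi] \leq |u|_{V_N}|\phi|_{V_N}$ and hence the bound by $\|u\|_{V_N}\|\phi\|_{V_N}$. For part (i), I would first use the symmetry $\gamma(x,y)=\gamma(y,x)$ to rewrite $B_N[u,\phi] = -(Ku,\phi)_{L^2(\tilde{\Omega})}$, and then use the fact that $u \in \tilde{V}_N$ satisfies $Ku=0$ on $\Omega_o$ to reduce this to $-(Ku,\phi)_{L^2(\Omega)}$. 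The operator $-K = \Gamma(x)(\cdot) - \gamma\ast(\cdot)$, with $\Gamma(x) = \int_{\tilde{\Omega}}\gamma(x,y)\,dy \in L^\infty(\tilde{\Omega})$, is a Hilbert--Schmidt perturbation of multiplication by a bounded function on the bounded domain $\tilde{\Omega}$, so it is bounded on $L^2(\tilde{\Omega})$. Combining this with Cauchy--Schwartz gives $|B_N[u,\phi]| \leq C\|u\|_{L^2(\tilde{\Omega})}\|\phi\|_{L^2(\Omega)}$, and the norm equivalence from Proposition \ref{p:burkovska} (encoded by the unique solvability of the nonlocal exterior problem, which bounds $\|u\|_{L^2(\tilde{\Omega})}$ by $\|u\|_{L^2(\Omega)}$) then yields (i).

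For the coercivity in part (iii), I would argue by contradiction exactly as in the $B_D$ case: assuming a sequence $\{u_n\} \subset \tilde{V}_N$ with $\|u_n\|_{L^2(\Omega)}=1$ and $B_N[u_n,u_n] \to 0$, and taking $\phi = -Ku_n$ (which lies in $L^2(\tilde{\Omega})$ and vanishes on $\Omega_o$), the same double Cauchy--Schwartz computation together with the trivial bound $B_N[\phi,\phi] \leq C\|\phi\|_{L^2(\tilde{\Omega})}^2$ produces $\|Ku_n\|_{L^2(\Omega)} \leq C\bigl(B_N[u_n,u_n]\bigr)^{1/2} \to 0$. To close the argument I need a reverse estimate $\|u\|_{L^2(\Omega)} \leq C\|Ku\|_{L^2(\Omega)}$ on $\tilde{V}_N$; this is essentially a nonlocal Poincar\'e inequality on the zero-mean subspace, and it is where the role of the zero-mean constraint defining $\tilde{V}_N$ becomes essential.

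The hard step, and the main obstacle, is establishing this inverse estimate because $\gamma$ has compact support in the Neumann setting. My plan is: (a) observe that $-K$ is a compact perturbation on $L^2(\tilde{\Omega})$ of multiplication by $\Gamma(x)$, which is strictly positive and bounded below on the interval $\tilde{\Omega}$ (it equals the constant $\int_{-R}^{R}\gamma\,dz$ on $\Omega$ and remains positive on the collar $\Omega_o$), so Fredholm theory applies; (b) show that the kernel of $-K$ on $L^2(\tilde{\Omega})$ reduces to the constants by arguing that $Ku=0$ forces $u(x)$ to equal a local weighted average of itself on $|y-x|\leq R$, and then propagating this along the connected interval $\tilde{\Omega}$ using positivity of $\tilde{\gamma}$ on $[-R,R]$; (c) conclude that $-K$ is invertible on the zero-mean subspace, which contains $\tilde{V}_N$, and therefore $\|u_n\|_{L^2(\tilde{\Omega})} \leq C\|Ku_n\|_{L^2(\Omega)} \to 0$, contradicting $\|u_n\|_{L^2(\Omega)}=1$. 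The propagation argument in step (b) is where the compact support of $\gamma$ introduces complications absent from the Dirichlet case, and I expect it to require the most care.
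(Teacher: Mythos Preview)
Your proposal is correct. For item (i) it essentially matches the paper: both use $B_N[u,\phi]=-(Ku,\phi)_{L^2(\Omega)}$, bound $K$ as a bounded operator on $L^2$ (the paper via Young's inequality, you via the Hilbert--Schmidt framing), and then invoke Proposition~\ref{p:burkovska} to pass from $L^2(\tilde{\Omega})$ to $L^2(\Omega)$.

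The genuine differences are in (ii) and (iii). For (ii) you do a direct Cauchy--Schwartz in the double integral, getting $B_N[u,\phi]\leq |u|_{V_N}|\phi|_{V_N}$ immediately; the paper instead derives (ii) from (i) by the norm equivalence of Proposition~\ref{p:burkovska}. Your route is slightly more elementary since it avoids the norm equivalence altogether. For (iii), the paper does not reproduce any contradiction argument: it simply cites the generalized nonlocal Poincar\'e inequality of Mengesha and Du (\cite[Proposition~1]{mengesha2013}), which gives $B_N[u,u]\geq\beta\|u\|_{L^2(\Omega)}^2$ on the zero-mean subspace $\tilde{V}_N$ directly. Your plan amounts to re-proving that inequality in this concrete setting, via the Fredholm structure $-K=M_{\Gamma(x)}-(\text{compact})$ on $L^2(\tilde{\Omega})$ together with the connectedness/propagation argument identifying $\ker(-K)$ with the constants. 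This is sound (note that $\Gamma(x)$ is continuous and strictly positive on the compact interval $\tilde{\Omega}$, so $M_{\Gamma}$ is invertible and Fredholm index zero applies), and it has the virtue of being self-contained; the paper's route is much shorter but outsources the work to an external reference. One small point to keep explicit in your write-up: when you take $\phi=-Ku_n$, this function need not lie in $\tilde{V}_N$, but the double-integral form of $B_N$ and the bound $B_N[\phi,\phi]\leq 2\|\Gamma\|_\infty\|\phi\|_{L^2(\tilde{\Omega})}^2$ extend to all of $L^2(\tilde{\Omega})$, so the computation goes through as you indicate.
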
 
\begin{proof}[Sketch proof]
Using the fact that $B_N[u, \phi] = -(\blue{K u}, \phi)_{L^2(\Omega)}$, the assumption that $\gamma_R \in L^1(\R)$ 
together with Young's inequality for convolutions shows that
\[ B_N[u, \phi] \leq C \| u\|_{L^2(\Omega)} \|\phi \|_{L^2(\Omega)}, \]
 for some positive constant $C$. Since by Proposition \ref{p:burkovska} the norms $\| \cdot \|_{V_N}$ and $ \| \cdot \|_{L^2(\Omega)}$ are equivalent, we obtain item ii).

The third item in the Lemma follows from a result of Mengesha and Du  in \cite[Proposition 1]{mengesha2013} (or \cite{mengesha2014nonlocal}, Proposition 2) where it is shown that 
this type of bilinear form satisfies a generalized Poincar\'e inequality, provided we work with the space $\tilde{V}_N$ instead of $V_N$.
\end{proof}

\section{Existence of Solutions}\label{s:existence}

Our goal in this section is to prove the existence of solutions to the weak formulation 
of the nonlocal Gray-Scott model. That is, we want to show that there is a positive time, $T$, and a pair of solutions $u,v$, each in $L^2( [0,T], V) $ and with $u_t,v_t $ in $ L^2( [0,T], V') $, satisfying the initial conditions $u(x,0) =u_0, v(x,0)=v_0$, as well as the equations
\begin{align*}
\langle u_t, \phi \rangle_{(V' \times V)} + d_u B_{D,N} [ u, \phi] = & ( - uv^2 + f(1-u), \phi)_{L^2(\Omega)},\\
\langle v_t, \phi \rangle_{(V' \times V)} + d_v B_{D,N} [ v, \phi] = & (  uv^2 - (f+\kappa)v), \phi)_{L^2(\Omega)},
\end{align*}
for all $\phi \in V$ and for a.e. $t \in [0,T]$.

In terms of notation, the subscript $D$ or $N$ in the expression for the bilinear form denote
the type of boundary constraints that are being considered, nonlocal Dirichlet or Neumann, respectively.
The choice of space $V$ also depends on the boundary constraints. Using the notation established in 
Subsection \ref{sub:notation}, we let $V= V_D$ in the case of Dirichlet boundary constraints,
and pick $V= V_N$ in the case of Neumann boundary constraints.

To prove the existence of weak solutions we use the Galerkin method.
Before proceeding with the proof, we recall the steps involved in this approach in order to highlight 
some of the challenges inherent in our problem.
Very briefly this method consists in picking an orthonormal basis for the space $V$,
 and approximating the unknowns, $u$ and $v$, 
by a \emph{finite} linear combination of elements in this basis.
Inserting this approximation into the weak formulation of the problem results in 
a finite dimensional system of ordinary differential equations.
Because these differential equations define a continuous vector field,  we are guaranteed the existence of a solution for
some  time interval $[0,T]$ thanks to the Peano-Cauchy theorem.
One  then  shows that the sequences of approximations $\{u^m\}_{m=1}^\infty, \{v^m\}_{m=1}^\infty $ are bounded in $L^2([0,T), V)$,
so that weakly convergent subsequences can be extracted.
Using Aubin's theorem one then proves that these sequences converge strongly in an appropriate space, 
and that their limits, $\bar{u}$ and  $\bar{v}$, satisfy the weak formulation of the problem.

One of the main difficulties we face comes from the nonlinear terms in the equation.
Because we are working with bilinear forms that come from integral operators, 
we are only able to obtain energy estimates in the $L^2$ norm. 
Consequently, we can only bound the approximating sequences in $L^2(0,T,L^2(\Omega))$, 
 and thus we cannot use Aubin's compactness theorem to extract a strongly convergent subsequence. As a result, the 
 weak limit $(\bar{u},\bar{v})$ does not necessarily satisfy the weak formulation of our problem. 
 In particular, it is not guaranteed that the nonlinear terms converge.

 To get around this issue we consider the following augmented systems.  In the Dirichlet case we
look at
\begin{equation}\label{e:weakform}
\begin{array}{r c l}
\langle u_t, \phi \rangle_{(V' \times V)} + d_u B_{D} [ u, \phi] & = &  ( - uv^2 + f(1-u), \phi)_{L^2(\Omega)},\\
\langle v_t, \phi \rangle_{(V' \times V)} + d_v B_{D} [ v, \phi] & =  &(  uv^2 - (f+\kappa)v), \phi)_{L^2(\Omega)},\\[1ex]
 \langle (u_{x})_t, \phi \rangle_{(V' \times V)} + d_u \Gamma ( u_x, \phi)_{L^2(\Omega)} & =&  d_u( J_D u, \phi)_{L^2(\Omega)}  +  ( - u_x v^2 - 2uvv_x - fu_x, \phi)_{L^2(\Omega)},\\
\langle (v_x)_t, \phi \rangle_{(V'\times V)} + d_v \Gamma(v_x, \phi)_{L^2(\Omega)} & =& d_v( J_D v, \phi)_{L^2(\Omega)} + (  u_xv^2 +2uvv_x - (f+\kappa)v_x, \phi)_{L^2(\Omega)},
\end{array}
\end{equation}
while in the Neumann case we study
\begin{equation}\label{e:weakformN}
\begin{array}{r c l}
\langle u_t, \phi \rangle_{(V' \times V)} + d_u B_{N} [ u, \phi] & = &  ( - uv^2 + f(1-u), \phi)_{L^2(\Omega)},\\
\langle v_t, \phi \rangle_{(V' \times V)} + d_v B_{N} [ v, \phi] & =  &(  uv^2 - (f+\kappa)v), \phi)_{L^2(\Omega)},\\[1ex]
\langle (u_{x})_t, \phi \rangle_{(V' \times V)} + d_u \Gamma (u_x, \phi)_{L^2(\Omega)} & =& d_u (J_N u,\phi)_{L^2(\Omega)} +  ( - u_x v^2 - 2uvv_x - fu_x, \phi)_{L^2(\Omega)},\\
\langle (v_x)_t, \phi \rangle_{(V'\times V)} + d_v \Gamma (v_x , \phi)_{L^2(\Omega)} & =& d_v (J_N v,\phi)_{L^2(\Omega)} + (  u_xv^2 +2uvv_x - (f+\kappa)v_x, \phi)_{L^2(\Omega)}.
\end{array}
\end{equation}
In this last set of equations the constant $\Gamma$ is defined as $\Gamma= \int_\R \gamma(|x|) \;dx$ in the Dirichlet case, or as $\Gamma= \int_\R \gamma_R(|x|) \;dx$ in Neumann case.
We also define the operators $J_{D,N}:V_{D,N} \longrightarrow L^2(\R)$ by
\[J_D u = \int_{\R} (u(y) - u(x) ) \partial_x \gamma(x,y) \;dy, \qquad J_N u = \int_{\tilde{\Omega}} (u(y) - u(x) ) \partial_x \gamma_R(x,y) \;dy.\]
Notice that by Hypotheses \ref{h:kernelN} and \ref{h:kernelD} we have that $\partial_x \gamma$ and $\partial_x \gamma_R$ are in $L^2(\R) \cap L^1(\R)$, so that the operators $J_{D,N}$ are well defined.
 In the analysis, we also impose the initial conditions,
 \[u(x,0) = u_0(x), \quad v(x,0) = v_0(x), \quad u_x(x,0) = \partial_x u_0(x), \quad v_x(x,0) = \partial_x v_0(x),\]
  for both problems, and to start we also assume that the unknowns $u_x,v_x$ are independent from $u,v$.

Before continuing, let us motivate our choice of auxiliary equations. In both cases, our starting point is the original 
nonlocal Gray-Scott model \eqref{e:gs_nonlocal}. Assuming for the moment that $u$ and $v$ are smooth, we can take a partial derivative with respect to $x$ of these equations and find,
\begin{align*}
\partial_t (\partial_x u)= & d_u \partial_x( K  u) - v^2 \partial_x u- 2 uv \partial_x v - f \partial_x u, \\
\partial_t( \partial_x v)= & d_v \partial_x (K  v) + v^2 \partial_x u + 2uv \partial_x v -(f+\kappa)\partial_x v.
 \end{align*}
 In the Dirichlet case, the last two equations in the augmented system \eqref{e:weakform} come from considering the weak formulation (in $L^2(\Omega)$) of the above expressions and
 the following calculation,
 \begin{align*}
 \partial_x(Ku) = 
 & \int_{\R} (u(y) - u(x) ) \partial_x \gamma(x,y) \;dy  - \partial_x u \int_{\R} \gamma(x,y) \;dy\\
 &=  J_Du   -  \Gamma \partial_x u.
 \end{align*}

On the other hand, since in the Neumann case the extended domain, $\tilde{\Omega}$, is a bounded subset of $\R$, we find that
\begin{align*}
 \partial_x(Ku) = 
 & \int_{\tilde{\Omega}} (u(y) - u(x) ) \partial_x \gamma_R(|x-y|) \;dy  - \partial_x u \int_{\tilde{\Omega}} \gamma_R(|x-y|) \;dy\\
 &=   J_N u   - \partial_x u \Gamma(x)
 \end{align*}
where $\Gamma(x) = \int_{\tilde{\Omega}} \gamma_R(|x-y|) \;dy$. Using this identity and the fact that $\Gamma(x) = \Gamma$, a constant, for $x \in \Omega$, we arrive at the last two equations of the weak formulation \eqref{e:weakformN}, shown above.

The remainder of this section is organized as follows.
In Subsection \ref{ss:basis} we define a special class of basis function, 
which we will then use to prove existence of weak solutions.
In Subsection \ref{ss:approx} we show that the Galerkin approximations, $u^m,v^m,u^m_x,v^m_x$, to systems \eqref{e:weakform} and \eqref{e:weakformN}
satisfy $\partial_xu^m=u^m_x , \partial_xv^m=v^m_x$ in $\Omega$.  This result will then help us prove that the sequences 
of approximating solutions we construct,
$\{ u^m\}, \{v^m\}$, are bounded in $L^2([0,T],H^1(\Omega))$, so that we may
apply Aubin's theorem. We perform this analysis in Subsection \ref{ss:bounds}, and then prove our main existence theorem
in Subsection \ref{ss:main}.

\subsection{The basis elements}\label{ss:basis}
To construct our sequence of approximate solutions, we  let $\{ \phi_n\}$ denote a basis for $V$
with the particular property that its elements do not satisfy any specific type 
of {\it local} boundary condition. 

For example, given a domain $\Omega = [ -L,L]$ and assuming nonlocal Dirichlet boundary constraints, we can pick as a basis for $V = V_D$ 
the set of functions 
\begin{equation}\label{e:basis}
\{ \phi_n\} = \{\frac{1}{2L}\} \cup \left\{\frac{1}{L} \cos\left( \frac{(2n-1) \pi x}{2 L} \right),  \;\frac{1}{L} \sin \left( \frac{(2n-1) \pi x}{2 L} \right) \right\},
\end{equation}
 with
 $n \in \N $.
One can check that this is an orthonormal set. To see why these functions
 form a basis for $V_D $, notice first that the full set
$\{\psi_m\} = \{1\} \cup \{ \cos( \frac{m \pi x}{2 L}),  \; \sin ( \frac{m \pi x}{2 L})\}$, with $m \in \N$, forms a basis for
the space $X = L^2( [-2L,2L))$ with periodic boundary conditions. 
Since $V_D$ is a subspace of $X$, it follows that these last functions can describe elements in $V_D$. However,
when the functions $\psi_m$ are restricted to the domain $[-L,L]$, one finds that some of them can be written
in terms of other elements in the set $\{\psi_m\}.$ 
In particular, using the inner product in $V_D \sim L^2([-L,L])$, one finds that
\[  \int_{-L}^L  \cos \left( \frac{m \pi x}{2 L} \right) ^2 \;dx = L, \quad  \int_{-L}^L  \sin \left( \frac{m \pi x}{2 L} \right) ^2 \;dx = L, \quad   \int_{-L}^L  \cos \left( \frac{n \pi x}{2 L} \right) \sin \left( \frac{m \pi x}{2 L} \right)  \;dx = 0,\]
while
\begin{align*}
  \int_{-L}^L  \cos \left( \frac{n \pi x}{2 L} \right) \cos \left( \frac{m \pi x}{2 L} \right)   \;dx  &= 
\frac{2L}{\pi} \left( \frac{\sin \left(\frac{ \pi}{2}  (m-n) \right) }{m-n} +\frac{\sin \left(\frac{ \pi}{2}  (m+n) \right) }{m+n}  \right),\\
  \int_{-L}^L  \sin \left( \frac{n \pi x}{2 L} \right) \sin \left( \frac{m \pi x}{2 L} \right)   \;dx  &= 
\frac{2L}{\pi} \left( \frac{\sin \left( \frac{ \pi}{2}  (m-n) \right) }{m-n} -\frac{\sin \left(\frac{ \pi}{2}  (m+n) \right) }{m+n}  \right).
\end{align*}
These last integrals are zero if $n$ and $m$ are both even or odd, but are non-zero if, for example, $m$ is even and $n$ is odd.
This implies that we can write the even terms, $ \cos( \frac{2 n \pi x}{2 L})$ and $ \sin ( \frac{2n \pi x}{2 L})$, as
a linear combination of  the odd terms, $ \cos( \frac{(2 n-1) \pi x}{2 L})$ and $\sin ( \frac{(2n-1) \pi x}{2 L})$, respectively.
We can therefore discard either all even terms or all odd terms and still have a complete orthogonal basis for $V_D$.
 We choose to discard the even terms, since by keeping the odd terms we are not imposing any kind of local boundary condition. Indeed, the elements in the set $\{\phi_n\}$  do not all satisfy periodic, homogeneous Dirichlet, or homogeneous Neumann boundary conditions on $[-L,L]$.

 In the case of Neumann boundary constraints, where  the space that we use is $V = V_N \subset L^2(\tO)$,
we can again consider the same set of basis functions, $\{ \phi_n\}$, as a starting point. 
One can then use the results from \cite{burkovska2021}, 
where the Neumann boundary constraint is viewed as an exterior problem,
to extend the elements in this set to functions that are defined in all of  $\tO = \Omega \cup \Omega_0$. 
These new functions, $\{ \tilde{\phi}_n\}$, then form an orthonormal basis
 (with respect to the $L^2(\Omega)$ inner product) for the space $V_N$.

To validate the above statements, we summarize the results from \cite{burkovska2021} in Lemma \ref{l:exteriorproblem}, and provide a sketch of the proof.
For this, we first need to set up the correct notation. In what follows we let $g \in L^2(\Omega)$ and consider the exterior problem associated with the Neumann constraints,
\begin{align}\label{e:exteriorp2}
K u = 0 & \qquad x \in \Omega_0\\
\nonumber
u = g  & \qquad x \in \Omega.
\end{align}
We also define the space $$V_N^0 = \{ u \in L^2(\Omega \cup \Omega_0) : B_N[u,u]< \infty, u(x) = 0 \quad \mbox{for} \quad x\in \Omega\},$$
with norm given by $\|  u \|_{V^0_N}  = \| u\|_{L^2(\Omega_0)} + |u|_{V^0_N} $, where $ |u|^2_{V^0_N}= B_N[u,u]$.
The weak formulation of the exterior problem \eqref{e:exteriorp2} then takes the form,
\begin{equation}\label{e:weakexterior}
 B_N[\hat{u},v] = ( K \tilde{g}, v)_{L^2(\Omega_0)} \qquad \forall v \in V^0_N.
 \end{equation}
where $\hat{u} = u-\tilde{g}$, and $\tilde{g}$ is the extension of $g$ by zero to $\tilde{\Omega}$.

\begin{Lemma}\label{l:exteriorproblem}
Let $g \in L^2(\Omega)$ and define $\tilde{g}$ as its extension by zero to $\tilde{\Omega}$. 
Then, the nonlocal Dirichlet problem \eqref{e:exteriorp2}
has a unique weak solution, $u \in L^2(\tilde{\Omega})$, given by $u = \hat{u} +\tilde{g}$, where $\hat{u} \in V^0_N$  satisfies
\eqref{e:weakexterior}. Moreover, we have the following inequality,
\[ \| u\|_{L^2(\Omega_0)} \leq C \| g\|_{L^2(\Omega)}\]
which holds for some $C = C(\gamma) >0$.
\end{Lemma}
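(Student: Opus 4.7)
The plan is to recast the exterior problem \eqref{e:exteriorp2} via the ansatz $u = \hat u + \tilde g$, where $\hat u \in V_N^0$ vanishes on $\Omega$ by construction, and to apply the Lax--Milgram theorem to the weak formulation \eqref{e:weakexterior} on the Hilbert space $V_N^0$. First I would check that $(V_N^0, \|\cdot\|_{V_N^0})$ is indeed Hilbert (routine, since $|\cdot|_{V_N^0}^2 = B_N[\cdot,\cdot]$ is induced by a symmetric positive bilinear form and $V_N^0$ is closed in the associated topology), and that the functional $v \mapsto (K\tilde g, v)_{L^2(\Omega_o)}$ is bounded on $V_N^0$. This follows from $\|\tilde g\|_{L^2(\tilde\Omega)} = \|g\|_{L^2(\Omega)}$, from the fact that $\gamma_R \in L^1(\mathbb R)$ yields $K\tilde g \in L^2(\tilde\Omega)$ via Young's inequality, and from the inclusion $\|v\|_{L^2(\Omega_o)} \leq \|v\|_{V_N^0}$.

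Next I would verify that $B_N$ is bounded and coercive on $V_N^0 \times V_N^0$. Boundedness is a direct Cauchy--Schwarz estimate on the symmetric integrand, mirroring the argument sketched for Lemma~\ref{l:BN}. For coercivity, the identity $|u|_{V_N^0}^2 = B_N[u,u]$ already controls the seminorm part of $\|\cdot\|_{V_N^0}$, so it suffices to prove a nonlocal Poincar\'e-type inequality $\|u\|_{L^2(\Omega_o)}^2 \leq C\, B_N[u,u]$ for every $u \in V_N^0$. This is the step I expect to be the main obstacle: because $\gamma_R$ has compact support of radius $R$, points near the outer boundary of $\Omega_o$ do not directly see $\Omega$, so a one-step estimate of the form $B_N[u,u] \geq \int_{\Omega_o}\!\int_\Omega u(x)^2 \gamma_R(x,y)\,dy\,dx$ only controls a portion of $\|u\|_{L^2(\Omega_o)}$. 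Instead, I would invoke the Mengesha--Du nonlocal Poincar\'e inequality \cite{mengesha2013,mengesha2014nonlocal} on the connected set $\tilde\Omega$, exploiting the fact that the only constant in $V_N^0$ is zero since $u|_\Omega \equiv 0$; this yields the required estimate with a constant depending only on $\gamma_R$ and the geometry of $\tilde\Omega$.

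With these ingredients in hand, Lax--Milgram produces a unique $\hat u \in V_N^0$ solving \eqref{e:weakexterior}, together with the a priori bound $\|\hat u\|_{V_N^0} \leq C\|K\tilde g\|_{L^2(\Omega_o)} \leq C'\|g\|_{L^2(\Omega)}$. Setting $u = \hat u + \tilde g$ gives $u = g$ on $\Omega$, and writing $B_N[\hat u, v] = -(K\hat u, v)_{L^2(\Omega_o)}$ for $v \in V_N^0$ turns \eqref{e:weakexterior} into $(Ku, v)_{L^2(\Omega_o)} = 0$ for all such $v$; since the restrictions $\{v|_{\Omega_o} : v \in V_N^0\}$ exhaust $L^2(\Omega_o)$, this forces the pointwise condition $Ku = 0$ on $\Omega_o$. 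Uniqueness of $u$ follows from uniqueness of $\hat u$, and the claimed bound $\|u\|_{L^2(\Omega_o)} \leq C\|g\|_{L^2(\Omega)}$ is a consequence of $\|u\|_{L^2(\Omega_o)} = \|\hat u\|_{L^2(\Omega_o)} \leq \|\hat u\|_{V_N^0}$.
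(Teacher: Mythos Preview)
Your proposal is correct and follows essentially the same route as the paper: recast via $u=\hat u+\tilde g$, apply Lax--Milgram on $V_N^0$ using the Mengesha--Du nonlocal Poincar\'e inequality for coercivity, and read off the $L^2(\Omega_o)$ bound from the a~priori estimate on $\hat u$ together with Young's inequality for $K\tilde g$. Your treatment is in fact slightly more complete, since you explicitly verify that the weak solution recovers $Ku=0$ on $\Omega_o$ and flag why a one-step pointwise estimate would not suffice for coercivity when the kernel horizon is smaller than the width of $\Omega_o$.
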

\begin{proof}
We consider the same bilinear form, $B_N$, from Definition \ref{d:BN}, except that now we view it as
an operator $B_N: V_N^0 \times V_N^0 \longrightarrow \R$. 
One can show that this bilinear form is continuous and bounded. Indeed, we have that
\begin{align*}
B_N[u,v] &= (-Ku,v)_{L^2(\Omega)} \\
& \leq \|Ku\|_{L^2(\tilde{\Omega})} \|v\|_{L^2(\tilde{\Omega})}\\
& \leq 2\| \gamma\|_{L^1(\R)} \|u\|_{L^2(\tilde{\Omega})} \|v\|_{L^2(\tilde{\Omega})}\\
& \leq 2\| \gamma\|_{L^1(\R)} \|u\|_{L^2(\Omega_0)} \|v\|_{L^2(\Omega_0)}\\
& \leq 2\| \gamma\|_{L^1(\R)} \|u\|_{V^0_N} \|v\|_{V^0_N}
\end{align*}
where the third line follows from extending $u$ by zero to $\R$, 
the relation $|Ku| \leq | \gamma \ast u| +|u| \| \gamma \|_{L^1(\R)}$, 
and Young's inequality for convolutions.

In addition, Proposition 1 in \cite{mengesha2013} shows that
there exists a constant $C>0$, such that  $\| u\|^2_{L^2(\tilde{\Omega})} \leq C B_N[u,u]$.
It then follows that the bilinear form is coercive, i.e.
\[ \|u\|^2_{V^0_N} \leq 2C B_N[u,u].\]
Therefore, by the Lax-Milgram theorem the weak formulation \eqref{e:weakexterior} has a unique solution $\hat{u}$, satisfying
$ \|\hat{u} \|_{V_N^0} \leq \| K \tilde{g} \|_{L^2(\Omega_0)} $. 
The inequality stated in the lemma then follows from the definition of $u$, i.e. $u = \hat{u} + \tilde{g}$, and
the following relations:
\[ \|u\|_{L^2(\Omega_0)} = \|\hat{u} \|_{L^2(\Omega_0)} \leq \|u\|_{V^0_N} \leq \| K \tilde{g} \|_{L^2(\Omega_0)}
=   \| K \tilde{g} \|_{L^2(\R)} \leq 2 \| \gamma\|_{L^1(\R)} \| g\|_{L^2(\Omega)}. \]
As before, the last two inequalities result from extending $g$ by zero to all of $\R$, the definition of the 
nonlocal operator $K$, and Young's inequality.
\end{proof}

\begin{Remark}
Suppose that $g \in H^1(\Omega)$ and that $u$ and $v$ solve the exterior problem \eqref{e:exteriorp2} with $u = g$ and $v = \partial_x g $ in $\Omega$.
Then, while it is clear that $v = \partial_x u$ for all $ x \in \Omega$, it is not necessarily true that $v = \partial_x u$ in the exterior domain, $\Omega_0$, since in general we do not have that $\partial_x(Ku) = K \partial_x u=0$ for $x \in \Omega_0$.
\end{Remark}

\subsection{An auxiliary system of equations}\label{ss:approx}

To show that the Galerkin approximations, $u^m,v^m,u^m_x,v^m_x$, to systems \eqref{e:weakform} and \eqref{e:weakformN}
satisfy $\partial_xu^m=u^m_x , \partial_xv^m=v^m_x$ in $\Omega$, 
in this subsection we consider the following auxiliary systems.

In the Dirichlet case, we use
\begin{equation}\label{e:weakaux}
\begin{array}{r c l}
\langle u_t, \phi \rangle_{(V' \times V)} + d_u B_{D} [ u, \phi] & = &  ( - \sigma(u)\sigma(v)^2 + f(1-u), \phi)_{L^2(\Omega)},\\
\langle v_t, \phi \rangle_{(V' \times V)} + d_v B_{D} [ v, \phi] & =  &(  \sigma(u)\sigma(v)^2 - (f+\kappa)v, \phi)_{L^2(\Omega)},\\[2ex]
\langle (u_x)_t, \phi \rangle_{(V' \times V)} + d_u \Gamma ( u_x, \phi)_{L^2(\Omega)} & = &  d_u (J_D u, \phi)_{L^2(\Omega)}   - (\sigma'(u)\sigma(v)^2 u_x, \phi)_{L^2(\Omega)}\\
& &  -2 ( \sigma(u) \sigma(v)\sigma'(v) v_x, \phi)_{L^2(\Omega)} -( f u_x, \phi)_{L^2(\Omega)},\\[1ex]
\langle (v_x)_t, \phi \rangle_{(V' \times V)} + d_v \Gamma ( v_x, \phi)_{L^2(\Omega)} & = & d_v (J_D u, \phi)_{L^2(\Omega)}+  (\sigma'(u)\sigma(v)^2 u_x, \phi)_{L^2(\Omega)} \\
& & + 2 ( \sigma(u) \sigma(v)\sigma'(v) v_x, \phi)_{L^2(\Omega)} -( (f+\kappa) v_x, \phi)_{L^2(\Omega)},
\end{array}
\end{equation}
while in the Neumann case we look at
\begin{equation}\label{e:weakauxN}
\begin{array}{r c l}
\langle u_t, \phi \rangle_{(V' \times V)} + d_u B_{N} [ u, \phi] & = &  ( - \sigma(u)\sigma(v)^2 + f(1-u), \phi)_{L^2(\Omega)},\\
\langle v_t, \phi \rangle_{(V' \times V)} + d_v B_{N} [ v, \phi] & =  &(  \sigma(u)\sigma(v)^2 - (f+\kappa)v, \phi)_{L^2(\Omega)},\\[2ex]
\langle (u_x)_t, \phi \rangle_{(V' \times V)} + d_u \Gamma ( u_x, \phi)_{L^2(\Omega)} & = & d_u (J_N u, \phi)_{L^2(\Omega)}   - (\sigma'(u)\sigma(v)^2 u_x, \phi)_{L^2(\Omega)} \\
& & -2 ( \sigma(u) \sigma(v)\sigma'(v) v_x, \phi)_{L^2(\Omega)} -( f u_x, \phi)_{L^2(\Omega)},\\[1ex]
\langle (v_x)_t, \phi \rangle_{(V' \times V)} + d_v  \Gamma ( v_x, \phi)_{L^2(\Omega)} & = & d_v (J_N u, \phi)_{L^2(\Omega)}  +  (\sigma'(u)\sigma(v)^2 u_x, \phi)_{L^2(\Omega)} \\
& &  + 2 ( \sigma(u) \sigma(v)\sigma'(v) v_x, \phi)_{L^2(\Omega)} -( (f+\kappa) v_x, \phi)_{L^2(\Omega)}.
\end{array}
\end{equation}

In both cases we consider the same initial conditions as before. We also define the  function $\sigma \in C^\infty(\R)$  by
\begin{equation}\label{e:sigma}
\sigma(z) = \left \{
\begin{array}{c c}
-2M & \mbox{if} \quad z< -2M\\
z & \mbox{if} \quad |z|\leq M\\
2M & \mbox{if} \quad z>2 M,
\end{array}
\right.
\end{equation}
for some $M>0$, with $\sigma$ 
satisfying $0 \leq \sigma'(z) \leq 1$ and $|\sigma(z)| \leq 2M$  for all $z\in \R$.

Notice that if both, $u,v$, are bounded in absolute value by a constant $D<M$, the systems shown above reduce to equations~\eqref{e:weakform} \blue{ and \eqref{e:weakformN}, respectively.}
Thus, our goal is to show that this condition is satisfied provided the $L^2(\Omega)$ norm of 
the initial functions, $u_0, v_0, \partial_x u_0, \partial_x v_0$, is small enough.
We then use this result to prove Proposition \ref{p:infinitybound} stated at the end of this subsection, where we show that  the Galerkin approximations $u^m,v^m,u^m_x,v^m_x$ to systems \eqref{e:weakform} and \eqref{e:weakformN} (or equivalently systems \eqref{e:weakfinite1}  and \eqref{e:weakfinite1N}, shown below, with $\sigma $ equal to the identity map) satisfy $\partial_x u^m = u^m_x$ and $\partial_x v^m= v^m_x$ on the domain $\Omega$.

We start by defining our Galerkin approximations for the variables $u,v, u_x,v_x$.
Given a fixed integer $m$, we select a set of $2m+1$ basis elements $\{\phi_k\}_{k=0}^{2m}$, with $\phi_0 = 1$ and with the property 
that $\partial_x \phi_k$ is in the span $\{ \phi_k\}_{k=0}^{2m}$, whenever $\phi_k$ is in this set. We then define
\begin{equation}\label{e:galerkin}
u^m =  \sum_{k=0}^{2m} d_k(t) \phi_k(x),\quad
 v^m =  \sum_{k=0}^{2m} e_k(t) \phi_k(x),\qquad
u^m_x =  \sum_{k=0}^{2m} \tilde{d}_k(t) \phi_k(x),\quad
 v^m_x=  \sum_{k=0}^{2m} \tilde{e}_k(t) \phi_k(x).
\end{equation}
satisfying
\[ u^m(x,0) = u_0(x), \quad v^m(x,0) = v_0(x), \quad u_x^m(x,0) = \partial_x u_0(x), \quad v^m_x(x,0) = \partial_x v_0(x).\]

\begin{Remark}\label{r:basis}
Notice that because of how the basis functions are defined, if say $\phi_j = \sin( \frac{(2j-1) \pi x}{2L} ) $ is part of the finite set $\{ \phi_k\}_{k=0}^{2m}$, then $\partial_x \phi_j = \frac{(2j-1) \pi }{2L} \cos( \frac{(2j-1) \pi x}{2L})  $ is not only in the span of this set, but is actually a constant multiple of an element in this same set.
\end{Remark}

Because in both cases, $V= V_D$ and $V= V_N$, the spaces $V \subset L^2(\Omega) \subset V'$ 
define a Gelfand triple, when the approximations \eqref{e:galerkin} are plugged back into systems \eqref{e:weakaux} and
\eqref{e:weakauxN} 
(with $\phi = \phi_k$) , one obtains an equivalent formulation for both problems,
\begin{equation}\label{e:weakfinite1}
\begin{array}{r c l}
( u^m_t, \phi_k )_{L^2(\Omega)} + d_u B_{D} [ u^m, \phi_k] & = &  ( - \sigma(u^m)\sigma(v^m)^2 + f(1-u^m), \phi_k)_{L^2(\Omega)},\\
(v^m_t, \phi_k )_{L^2(\Omega)} + d_v B_{D} [ v^m, \phi_k] & =  &(  \sigma(u^m)\sigma(v^m)^2 - (f+\kappa)v^m, \phi_k)_{L^2(\Omega)},\\[2ex]
( (u^m_x)_t, \phi_k )_{L^2(\Omega)}  + d_u \Gamma ( u^m_x, \phi_k)_{L^2(\Omega)} & = &  d_u (J_D u^m, \phi_k)_{L^2(\Omega)}  - (\sigma'(u^m)\sigma(v^m)^2 u^m_x, \phi_k)_{L^2(\Omega)} \\
& & -2 ( \sigma(u^m) \sigma(v^m)\sigma'(v^m) v^m_x, \phi_k)_{L^2(\Omega)}\\
& & -( f u^m_x, \phi_k)_{L^2(\Omega)},\\[1ex]
( (v^m_x)_t, \phi_k )_{L^2(\Omega)}+ d_v \Gamma ( v^m_x, \phi_k)_{L^2(\Omega)} & = & d_v (J_D  v^m, \phi_k)_{L^2(\Omega)} + (\sigma'(u^m)\sigma(v^m)^2 u^m_x, \phi_k)_{L^2(\Omega)}\\
& &  + 2 ( \sigma(u^m) \sigma(v^m)\sigma'(v^m) v^m_x, \phi_k)_{L^2(\Omega)} \\
& &-( (f+\kappa) v^m_x, \phi_k)_{L^2(\Omega)},
\end{array}
\end{equation}
\begin{equation}\label{e:weakfinite1N}
\begin{array}{r c l}
( u^m_t, \phi_k )_{L^2(\Omega)} + d_u B_{N} [ u^m, \phi_k] & = &  ( - \sigma(u^m)\sigma(v^m)^2 + f(1-u^m), \phi_k)_{L^2(\Omega)},\\
(v^m_t, \phi_k )_{L^2(\Omega)} + d_v B_{N} [ v^m, \phi_k] & =  &(  \sigma(u^m)\sigma(v^m)^2 - (f+\kappa)v^m, \phi_k)_{L^2(\Omega)},\\[2ex]
( (u^m_x)_t, \phi_k )_{L^2(\Omega)}  + d_u \Gamma( u^m_x, \phi_k)_{L^2(\Omega)} & = & d_u (J_N u^m, \phi_k)_{L^2(\Omega)}   - (\sigma'(u^m)\sigma(v^m)^2 u^m_x, \phi_k)_{L^2(\Omega)} \\
& & -2 ( \sigma(u^m) \sigma(v^m)\sigma'(v^m) v^m_x, \phi_k)_{L^2(\Omega)}\\
& & -( f u^m_x, \phi_k)_{L^2(\Omega)},\\[1ex]
( (v^m_x)_t, \phi_k )_{L^2(\Omega)}+ d_v \Gamma( v^m_x, \phi_k)_{L^2(\Omega)} & = & d_v (J_N  v^m, \phi_k)_{L^2(\Omega)} +  (\sigma'(u^m)\sigma(v^m)^2 u^m_x, \phi_k)_{L^2(\Omega)}\\
& &  + 2 ( \sigma(u^m) \sigma(v^m)\sigma'(v^m) v^m_x, \phi_k)_{L^2(\Omega)} \\
& &-( (f+\kappa) v^m_x, \phi_k)_{L^2(\Omega)}.
\end{array}
\end{equation}

Recall that in the Neumann case,  the basis elements in the Galerkin approximation,  $\tilde{\phi}$, consist of functions $\phi \in L^2(\Omega)$, 
which are extended to $\tilde{\Omega}$ by treating the Neumann constraint as an exterior problem (see \cite{burkovska2021} or Lemma \ref{l:exteriorproblem}). Thus, in this case, the functions $u^m,v^m,u^m_x,v^m_x$, given by \eqref{e:galerkin} are also defined on $\tilde{\Omega}$, so that the terms 
$B_N[u^m, \phi_k], B_N[v^m, \phi_k], (J_N u^m_x, \phi_k)_{L^2(\Omega)},(J_N v^m_x, \phi_k)_{L^2(\Omega)} $,
 in system \eqref{e:weakfinite1N} are all well defined.

Since $\{ \phi_n\}$ is an orthonormal basis, these two sets of equations
 can each be written as a system of nonlinear ordinary differential equations,
\[\begin{array}{r c l}
 I_{kj} d'_j + d_u M_{kj} d_j & =& F^{(1)}_k(d_j, e_j),\\
 I_{kj} e'_j + d_v M_{kj} e_j & = & F^{(2)}_k(d_j, e_j),\\
 I_{kj} \tilde{d}'_j + d_u \Gamma I_{kj} \tilde{d}_j & = &G^{(1)}_k(d_j, e_j, \tilde{d}_j, \tilde{e}_j),\\
 I_{kj} \tilde{e}'_j + d_v \Gamma I_{kj} \tilde{e}_j & = & G^{(2)}_k(d_j, e_j, \tilde{d}_j, \tilde{e}_j),
 \end{array}\]
where we have used  the convention that repeated indices represent summation in that index.
We also have that $I_{kj} = ( \phi_k , \phi_j)_{L^2(\Omega)}$ is the identity matrix,  $M_{kj} = B_D[\phi_k, \phi_j]$ in the Dirichlet case while $M_{kj} =  B_N[\phi_k, \phi_j]$ in the Neumann case, and $F^{(i)}, G^{(i)}$ denote 
the nonlinear terms of each equation, so that $F^{(i)}_k, G^{(i)}_k$ then represent the projections
of the nonlinearities onto the span of $\phi_k$. 

Since all nonlinearities are continuous, this system
 defines a Lipschitz continuous vector field in the variables $d_k,e_k,\tilde{d}_k,\tilde{e}_k$,
with a Lipschitz constant that depends on the function $\sigma$, and in particular on the upper bound $M$ (see equation \eqref{e:sigma}). However, notice that this Lipschitz constant is independent of the number of approximating functions used in \eqref{e:galerkin}.
As a result, we are able to apply the
Picard-Lindel\"of theorem and obtain the existence of
a constant $T=T(M)>0$, which is independent of $m$,  and a unique solution vector 
$\langle d_k(t),e_k(t), \tilde{d}_k(t), \tilde{e}_k(t) \rangle$, $k = 1,2,\cdots, m$,
which exists on the time interval $[0,T]$. In addition these functions live in the space $C^1[0,T]$.

The above discussion also allows to prove the next lemma.
\begin{Lemma}\label{l:derivatives}
Consider the system of ordinary differential equations defined either by \eqref{e:weakfinite1} or \eqref{e:weakfinite1N},  together with its solution vector 
$\langle d_k(t),e_k(t), \tilde{d}_k(t), \tilde{e}_k(t) \rangle$, $k = 1,2,\cdots, m$.
 When this solution vector is inserted into the Galerkin approximation \eqref{e:galerkin}, the functions $(u^m,v^m, u^m_x, v^m_x)$ satisfy,
\[ u^m_x = \partial_xu^m \quad \mbox{and} \quad v^m_x = \partial_x v^m\]
on the domain $\Omega$.
\end{Lemma}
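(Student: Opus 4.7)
The plan is to show that $(\partial_x u^m, \partial_x v^m)$ satisfies, in the finite-dimensional span $V_m := \spann\{\phi_1,\dots,\phi_m\}$, the same ODE system as $(u^m_x, v^m_x)$, and then invoke the uniqueness part of the Picard-Lindel\"of theorem (available since the systems \eqref{e:weakfinite1}--\eqref{e:weakfinite1N} have a Lipschitz right-hand side, with constant controlled by the cutoff $M$ in $\sigma$) to conclude the two pairs agree on $[0,T]$.

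The key structural fact is that the basis \eqref{e:basis} is stable under differentiation: the constant is annihilated, while the cosines and sines of frequency $\omega_n = (2n-1)\pi/(2L)$ are interchanged up to a scalar factor $\pm\omega_n$ by $\partial_x$. Hence $\partial_x V_m \subseteq V_m$, so $\partial_x u^m$ and $\partial_x v^m$ belong to $V_m$ and the comparison with $u^m_x, v^m_x$ is genuinely an identity of elements of $V_m$. The same stability passes to the exterior-extended basis $\{\tilde\phi_n\}$ of Subsection~\ref{ss:basis} used in the Neumann case.

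To derive the Galerkin evolution equation satisfied by $\partial_x u^m$, I would test the equation for $u^m$ against $\partial_x\phi_k\in V_m$, integrate by parts on $\Omega$ to move $\partial_x$ off the test function, and use the commutation identity $\partial_x(Ku^m)=K(\partial_x u^m)$, which is valid in the Dirichlet case because $\tO=\R$ and $\gamma\in L^2(\R)\cap L^1(\R)$. A parallel chain-rule computation on the nonlinear right-hand side $-\sigma(u^m)\sigma(v^m)^2 + f(1-u^m)$ reproduces the prescribed forcing of the $u^m_x$-equation in \eqref{e:weakfinite1} term by term. In the Neumann case the commutation identity is replaced by $\partial_x(Ku^m)=Gu^m-\Gamma\partial_x u^m$ derived earlier in this section, which is precisely why \eqref{e:weakfinite1N} features $\Gamma$ and $G$ in place of the bilinear form for the derivative unknowns.

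After confirming that the initial data for $(u^m_x, v^m_x)$ and $(\partial_x u^m, \partial_x v^m)$ match in $V_m$ (again a consequence of $\partial_x V_m \subseteq V_m$), uniqueness for the finite-dimensional Picard-Lindel\"of problem forces $u^m_x = \partial_x u^m$ and $v^m_x = \partial_x v^m$ on $\Omega$. The main obstacle is the careful accounting of the boundary contributions at $x=\pm L$ that appear when integrating by parts in both the diffusion and nonlinear terms; the cancellation between them, together with the explicit correction via $G$ and $\Gamma$ in the Neumann setting, is what ensures that the equation derived for $\partial_x u^m$ reproduces the prescribed equation for $u^m_x$ exactly.
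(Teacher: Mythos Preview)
Your plan is essentially the paper's argument: verify that $(\partial_x u^m,\partial_x v^m)$ solve the same finite-dimensional ODE system as $(u^m_x,v^m_x)$ with the same initial data, and conclude by Picard--Lindel\"of uniqueness. The paper's proof is in fact terser than yours---it does not isolate the closure property $\partial_x V_m\subseteq V_m$ or proceed via testing against $\partial_x\phi_k$ and integrating by parts; it simply asserts that the third and fourth equations were constructed precisely so that the spatial derivatives of solutions to the first two satisfy them, and then appeals to uniqueness. You are right to flag the boundary contributions at $x=\pm L$ as the point requiring care; the paper does not discuss them at all.

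One caveat: your claim that the stability $\partial_x V_m\subseteq V_m$ ``passes to the exterior-extended basis $\{\tilde\phi_n\}$'' in the Neumann case is neither correct in general (see the Remark following Lemma~\ref{l:exteriorproblem}, which notes that the exterior extension does not commute with $\partial_x$) nor needed. The conclusion of the lemma is asserted only on $\Omega$, where $\tilde\phi_n|_\Omega=\phi_n$ by construction, so the closure property of the original basis on $\Omega$ is all that is required.
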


\begin{proof}
We prove the results of the Lemma for the Dirichlet case, since the analysis for the Neumann case is very similar.
Suppose then that $u^m, v^m$ are of the form \eqref{e:galerkin} and that they solve the first set of equations in \eqref{e:weakfinite1}. 
Because the coefficients $d_k(t), e_k(t)$ are $C^1$ and the basis functions $\phi_k$ are in $C^\infty(\Omega)$ (since they are composed of sines and cosine functions), we then have that the derivatives, $\partial_x u^m$ and $\partial_x v^m$, are also elements in $V_D$. Plugging these functions in place of $u^m_x, v^m_x$ on
the second set of equations in system \eqref{e:weakfinite1}, we arrive at
\begin{equation}\label{e:modweak_or}
\begin{array}{r cl}
( (\partial_x u^m)_t, \phi_k )_{L^2(\Omega)}  + d_u \Gamma ( \partial_x u^m, \phi_k)_{L^2(\Omega)}  -  
d_u (J_D u^m, \phi_k)_{L^2(\Omega)} - ( G_\sigma^{(1)}( \partial_x u^m,\partial_x v^m), \phi_k) & = &  0,\\[1ex]
( (\partial_x v^m)_t, \phi_k )_{L^2(\Omega)}+ d_v \Gamma ( \partial_x v^m, \phi_k)_{L^2(\Omega)}   
- d_v (J_D  v^m, \phi_k)_{L^2(\Omega)} - ( G_\sigma^{(2)}(\partial_x u^m, \partial_x v^m), \phi_k) & = & 0,
\end{array}
\end{equation}
where we use $G_\sigma^{(i)}$ to denote the nonlinear maps that appear on the right hand side of \eqref{e:weakfinite1}.
Our goal is to show that the above equalities actually hold.

Because the identity $\partial_x(Ku) = J_D u - \Gamma \partial_x u$ is valid for all  those $u \in V_D$ which have derivatives, $\partial_x u $, also in $V_D$, 
we may write the left hand side of \eqref{e:modweak_or} as,
\begin{equation}\label{e:modweak}
\begin{array}{r cl}
( \partial_x(\partial_t u^m), \phi_k )_{L^2(\Omega)} - du ( \partial_x (K u^m), \phi_k)_{L^2(\Omega)}  - 
( \partial_x( F_\sigma^{(1)} (u^m,v^m) ), \phi_k)_{L^2(\Omega)},  \\[1ex]
( \partial_x(\partial_t v^m), \phi_k )_{L^2(\Omega)} - dv ( \partial_x (K v^m), \phi_k)_{L^2(\Omega)} - 
( \partial_x( F_\sigma^{(2)} (u^m,v^m) ), \phi_k)_{L^2(\Omega)}.
\end{array}
\end{equation}
Here, $F^{(i)}_\sigma(u,v)$ denotes the nonlinear maps appearing on the right hand side of the first two equations in \eqref{e:weakfinite1}.
The expressions in \eqref{e:modweak} follow from the
 the identity $\partial_x F^{(i)}_\sigma( u,v) = G^{(i)}_\sigma( \partial_x u, \partial_x v)$, which is valid for elements $u$ in $C^1(\Omega)$. 

Next, notice that letting
\begin{align*}
 N_1 := & \partial_t u^m - d_u  (K u^m) - F_\sigma^{(1)}(u^m,v^m),\\
 N_2 := & \partial_t v^m - d_v  (K v^m) - F_\sigma^{(2)}(u^m,v^m),
 \end{align*}
 the expressions \eqref{e:modweak} can be written as
 \[ ( \partial_x N_1, \phi_k)_{L^2(\Omega)},\]
 \[ ( \partial_x N_2, \phi_k)_{L^2(\Omega)}.\]
 Given the identity $B_D[u, \phi] = -(Ku, \phi)_{L^2(\Omega)}$, which holds for all $u \in V_D$, one recognizes that $( N_i, \phi_k)_{L^2(\Omega)} $, with $i =1,2$, are just the first two equations in system \eqref{e:weakfinite1}. Because $u^m$ and $v^m$ solve these two equations,  the projection of $N_i$ onto the span of $\{ \phi_k\}_{k=0}^{2m}$ is equal to zero. At the same time, since due to our choice of basis functions (see \eqref{e:galerkin} and Remark \ref{r:basis}), $\partial_x \phi_k$ is also a basis element whenever $\phi_k$ is in this set, it then follows that the projection of $\partial_x N_i$ onto the span of $\{ \phi_k\}_{k=0}^{2m}$ is also zero.
 As a result we obtain that $( \partial_x N_i, \phi_k)_{L^2(\Omega)} =0$ for $i =1,2$ and $k =1,2, \cdots, m$, and consequently the equations \eqref{e:modweak_or} are indeed valid.

Because the system of ordinary differential equations defined by the weak formulation of both, the Dirichlet and Neumann problem, has a unique solution, we conclude that $u^m_x = \partial_x u^m$ and $v^m_x = \partial_x v^m$.
Because
the equations themselves are defined for $x \in \Omega$,  the weak formulation of these problems is defined based on the $L^2(\Omega)$ inner product. Therefore, our result is only valid on $\Omega$.
\end{proof}

\begin{Remark}
Notice that in Lemma \ref{l:derivatives} we do not conclude that $u^m,v^m$ are in 
the space  $H^1(\tilde{\Omega})$, (recall $\tilde{\Omega} = \Omega \cup \Omega_0$), since this would
imply that the functions are continuous across the boundary $\partial \Omega$,
contradicting the results from \cite{du2019}.
 \end{Remark}

Lemmas \ref{l:bounds} and \ref{l:bounds_dx} stated next show that on the time interval $[0,T]$, the solutions 
to systems \eqref{e:weakfinite1} and  \eqref{e:weakfinite1N}, of the form \eqref{e:galerkin}, remain bounded in the $L^2(\Omega)$ norm with a bound that is independent of $m$.

\begin{Lemma}\label{l:bounds}
For fixed $m$, assume that $u^m, v^m$ are as in \eqref{e:galerkin}, and that they represent the first two components of the solution vector to systems
\eqref{e:weakfinite1} and \eqref{e:weakfinite1N}.
Then, there exists a small $\eps>0$ such that
\[
 \| u^m(t) \|_{L^2(\Omega)} +  \| v^m(t)\|_{L^2(\Omega) } \leq  \left(3+\sqrt{\frac{C}{A}}\right ) \left( \| u(0) \|_{L^2(\Omega)} + \| v(0) \|_{L^2(\Omega)} \right)   + \sqrt{B/A}+ 2\sqrt{| \Omega|},
 \]
 for all $t$ in $[0,T]$, with $A,B,C>0$  given by
\begin{align*}
A =& \left[ (1-\eps^2)(f+\kappa) -   \eps^2 |d_v-d_u| \right], \\
B = & \frac{1}{\eps^2} (| d_v - d_u| + \kappa ) | \Omega|   + \frac{f}{\eps^2} |\Omega|, \\
C = & \frac{1}{\eps^2} ( |d_v - d_u| + \kappa ).
\end{align*}
\end{Lemma}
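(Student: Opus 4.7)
The plan is an energy argument on the Galerkin system \eqref{e:weakfinite1}--\eqref{e:weakfinite1N}, exploiting two structural features of the saturated nonlinearity: the sign preservation $\sigma(u)u \ge 0$, and the exact cancellation $-\sigma(u^m)\sigma(v^m)^2 + \sigma(u^m)\sigma(v^m)^2 = 0$ when the first two equations are added. The first feature lets me control $\|u^m\|_{L^2(\Omega)}$ directly; the second lets me control $\|u^m+v^m\|_{L^2(\Omega)}$, from which $\|v^m\|_{L^2(\Omega)}$ follows by the triangle inequality. The bound on $\|u^m\|+\|v^m\|$ then assembles by arithmetic.

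For the $u^m$-bound, I would test the first line of \eqref{e:weakfinite1} (resp.\ \eqref{e:weakfinite1N}) with $\phi = u^m$. Since $\sigma$ is sign-preserving, $\sigma(u^m)u^m \ge 0$ pointwise, so combined with $\sigma(v^m)^2 \ge 0$ the cubic contribution $(\sigma(u^m)\sigma(v^m)^2, u^m)_{L^2}$ is non-negative and can be discarded along with $d_u B_{D,N}[u^m, u^m] \ge 0$. Estimating $f(1,u^m)_{L^2}$ via Young's inequality and applying Gronwall yields the uniform-in-$m$ bound $\|u^m(t)\|_{L^2(\Omega)}^2 \le \|u_0\|_{L^2(\Omega)}^2 + |\Omega|$ for all $t\in[0,T]$.

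For the $w^m$-bound with $w^m := u^m + v^m$, I would add the first two lines of the Galerkin system and test with $\phi = w^m$; the cubic terms annihilate identically. Using the algebraic identity
\[ d_u B[u^m, w^m] + d_v B[v^m, w^m] = d_u B[w^m, w^m] + (d_v - d_u) B[v^m, w^m], \]
discarding the non-negative $d_u B[w^m, w^m]$, and expanding
\[ f\|w^m\|^2 + \kappa(v^m, w^m)_{L^2} = f\|u^m\|^2 + (f+\kappa)\|v^m\|^2 + (2f+\kappa)(u^m, v^m)_{L^2}, \]
exposes the natural coercive term $(f+\kappa)\|v^m\|^2$. I would then apply Young's inequality with parameter $\eps$ to the three surviving cross terms $(d_v-d_u)B[v^m,w^m]$, $(2f+\kappa)(u^m,v^m)_{L^2}$, and the source $(f,w^m)_{L^2}$, with weights tuned so that the $\eps^2$-weighted portions subtract $\eps^2|d_v-d_u|$ and $\eps^2(f+\kappa)$ respectively from the coefficient of $\|v^m\|^2$, leaving precisely $A = (1-\eps^2)(f+\kappa) - \eps^2|d_v-d_u|$. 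The $1/\eps^2$-weighted pieces assemble into $B$ and $C\|u^m\|^2$.

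The result is a differential inequality of the form $\tfrac{d}{dt}\|w^m\|^2 + 2A\|v^m\|^2 \le 2B + 2C\|u^m\|^2$; combined with the elementary estimate $\|v^m\|^2 \ge \tfrac{1}{2}\|w^m\|^2 - \|u^m\|^2$ and the step-one bound on $\|u^m\|$, Gronwall bounds $\|w^m(t)\|$ uniformly in $m$. The triangle inequality $\|v^m\| \le \|w^m\| + \|u^m\|$ then yields $\|u^m\|+\|v^m\| \le 2\|u^m\| + \|w^m\|$, which after substituting the two Gronwall bounds and using $\sqrt{a+b}\le\sqrt{a}+\sqrt{b}$ produces the stated form. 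The main obstacle is the delicate bookkeeping of constants in the Young splits to match $A$, $B$, $C$ exactly; in particular $\eps$ must be small enough that $A > 0$, which is the content of the ``small $\eps$'' hypothesis in the lemma.
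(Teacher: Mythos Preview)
Your overall architecture matches the paper exactly: an energy estimate on $u^m$ alone (using $\sigma(u)u\ge 0$ to drop the cubic), then an energy estimate on $w^m=u^m+v^m$ (using the exact cancellation of the cubics), followed by Gr\"onwall and the triangle inequality. The $u^m$-step is essentially identical.

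The difference is in how the $w$-equation is rearranged. You factor out $d_u$ from the diffusive terms, leaving the cross term $(d_v-d_u)B[v^m,w^m]$, and you expand the linear zero-order terms so that the coercive piece is $(f+\kappa)\|v^m\|^2$; this forces you to convert back to $\|w^m\|^2$ via $\|v^m\|^2\ge\tfrac12\|w^m\|^2-\|u^m\|^2$ before Gr\"onwall can close. The paper instead factors out $d_v$ and rewrites the zero-order terms as $-(f+\kappa)w^m+\kappa u^m+f$, so that after testing with $w^m$ one obtains
\[
\tfrac12\tfrac{d}{dt}\|w^m\|^2 + d_v B_{D,N}[w^m,w^m] + (f+\kappa)\|w^m\|^2
= (d_v-d_u)B_{D,N}[u^m,w^m] + \kappa(u^m,w^m) + (f,w^m).
\]
Here the coercive term is directly $(f+\kappa)\|w^m\|^2$, and every right-hand cross term has the form (const)$\cdot\|u^m\|\,\|w^m\|$ or (const)$\cdot|\Omega|^{1/2}\|w^m\|$. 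A single application of Cauchy-with-$\eps$ on each then produces \emph{exactly} the constants $A,B,C$ stated in the lemma, with no auxiliary inequality needed. Your decomposition is a valid alternative and would yield a bound of the same qualitative shape, but your claim that the Young splits can be tuned to land on the stated $A,B,C$ is not quite right: with the cross terms $(d_v-d_u)B[v^m,w^m]$ and $(2f+\kappa)(u^m,v^m)$ you inevitably pick up different combinations (e.g.\ a $2f+\kappa$ in place of $\kappa$, and a direct $|d_v-d_u|\|v^m\|^2$ contribution rather than an $\eps^2$-weighted one). The paper's rearrangement is what makes the bookkeeping clean.
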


\begin{proof}
To improve exposition, from now on we drop the superscript $m$ from our notation and we let
 $\| \cdot \|= \| \cdot \|_{L^2(\Omega)}$ and $( \cdot, \cdot) = (\cdot, \cdot)_{L^2(\Omega)}$. 
 Multiplying the first equation of system \eqref{e:weakfinite1} (or \eqref{e:weakfinite1N} ) by $d_k$ and adding from $k=1$ through $m$, 
 we find that
\[ \frac{1}{2} \frac{d}{dt} \| u\|^2 + d_u B_{D,N}[u,u]  = - (\sigma(u) \sigma(v)^2, u)  + f( 1-u,u).\]
Because $ f( 1-u,u) = \frac{f}{2} \left[ - \| 1- u\|^2 - \|u\|^2 + |\Omega | \right]$,
while $(\sigma(u) \sigma(v)^2, u) >0 $,
 this expression simplifies to
\[ \frac{1}{2} \frac{d}{dt} \| u\|^2 + d_u B_{D,N}[u,u]  + \frac{f}{2} \|u\|^2 \leq \frac{f}{2} | \Omega|.\]
At the same time, since the bilinear form is nonnegative, i.e. $B_{D,N}[u,u] \geq 0$, we also find that  
\[ \frac{1}{2} \left[  \frac{d}{dt} \| u\|^2 + f \|u\|^2 \right] \leq \frac{f}{2} | \Omega|.\]
Then, using Gr\"onwall's inequality, we obtain that the expression
\begin{equation}\label{e:boundu}
 \| u(t) \|^2 \leq  \| u(0)\|^2 + |\Omega|
\end{equation}
holds for all $t \in [0,T]$.

Next, we define $w = u+ v$ and derive an upper bound for its $L^2$ norm, $\|w(t)\|$. 
Adding the first two equations of system \eqref{e:weakfinite1}
 and rearranging terms gives us
the following equation for $w$,
\begin{align*}
 ( w_t, \phi_k )_{L^2(\Omega)} & + d_v B_{D,N}[w, \phi_k]  + (f+\kappa) ( w, \phi_k)_{L^2(\Omega)}  \\
  & =  (d_v-d_u) B_{D,N}[u, \phi_k] + \kappa(u, \phi_k)_{L^2(\Omega)}  + (f, \phi_k)_{L^2(\Omega)}.
\end{align*}
Then, multiplying the equation by $d_k + e_k$ and adding from $k=1$ through $m$, 
we find that
\begin{align*}
\frac{1}{2}\frac{d}{dt} \|w\|^2 & + d_v B_{D,N}[w, w]  + (f+\kappa) \|w\|^2  \\
  & =  (d_v-d_u) B_{D,N}[u, w] + \kappa(u, w)_{L^2(\Omega)}  + (f, w)_{L^2(\Omega)}.
\end{align*}
This leads us to the next set of inequalities
\begin{align*}
 \frac{1}{2} \frac{d}{dt} \|w\|^2  + (f+\kappa) \|w\|^2 
&  \leq |d_v-d_u| \|u\| \|w\| + \kappa \|u\|\|w\|  + f|\Omega|^{1/2} \| w\| \\
& \leq  ( |d_v-d_u| + \kappa) \left[ \frac{1}{\eps^2} \|u\|^2 + \eps^2 \|w\|^2 \right]
+ f \left[ \frac{1}{\eps^2} |\Omega| + \eps^2 \|w \|^2 \right],
\end{align*}
where the first line follows from the boundedness of the bilinear form $B_{D,N}$ and an
application of the Cauchy-Schwartz inequality,
 while the second line is obtained by applying  Cauchy's inequality with $\eps$.

Next, by rearranging terms and using the bound for $\|u\|^2$ found above, we arrive at
\[  \frac{1}{2}  \frac{d}{dt} \|w\|^2 + A \| w\|^2 \leq B + C\|u(0)\|^2\]
where 
\begin{align*}
A =& \left[ (1-\eps^2)(f+\kappa) -   \eps^2 |d_v-d_u| \right], \\
B = & \frac{1}{\eps^2} ( |d_v - d_u| + \kappa ) | \Omega|   + \frac{f}{\eps^2} | \Omega|,\\
C = & \frac{1}{\eps^2} ( |d_v - d_u| + \kappa ) .
\end{align*}
Notice that we can always pick $\eps$ small enough so that all these constants are positive.
Applying Gr\"onwall's inequality we then find that the following expressions
\begin{align*}
\|w(t)\|^2 &\leq  \|w(0)\|^2 + \frac{B}{A} + \frac{C}{A} \| u(0)\|^2,\\
\|w(t)\| &\leq \left(1 +  \sqrt{ \frac{C}{A}} \right)  \|w(0)\| +\sqrt{ \frac{B}{A}},
\end{align*}
hold for all $t \in [0,T]$. Because $\| w \| = \| u + v||$, using the reverse triangle inequality and the above bounds for $\|u(t)\|$, 
we finally obtain,
\[ \| v(t) \| \leq \Big ( 2 + \sqrt{ \frac{C}{A}}  \Big)( \|u(0)\| + \|v(0)\|) +\sqrt{ \frac{B}{A}} + \sqrt{| \Omega|}.\]
The results of the lemma then follow by adding the $L^2$ bounds for $u(t)$ and $v(t)$.
\end{proof}

The following lemma show that the solutions $u^m_x,v^m_x$ to the second set of equations in
\eqref{e:weakfinite1} and \eqref{e:weakfinite1N} are bounded in the $L^2$ norm.


\begin{Lemma}\label{l:bounds_dx}
For a fixed $m$, suppose that $u^m_x, v^m_x$ are as in \eqref{e:galerkin}, and that they represent the solution to the last two components of the solution vector to system \eqref{e:weakfinite1} or \eqref{e:weakfinite1N}.
Then, 
  \[ \| u_x(t)\|_{L^2(\Omega)} + \| v_x(t)\|_{L^2(\Omega)} \leq 
  \sqrt{2} \left( \| u_x(0)\|_{L^2(\Omega)} + \| v_x(0)\|_{L^2(\Omega)} +  \sqrt{ \frac{ \tilde{D} }{2E}   } \right) \rme^{ ET},\]
  for all $t \in [0,T]$, where the constant $M$ is defined as in \eqref{e:sigma}, and
  \begin{align*}
  E = & \left( \frac{\tilde{C}}{2}+ 14M^2 - f \right) \\
  \tilde{C} = &  \| \partial_x \gamma \|_{L^1(\R)} \max(du,dv) \quad \mbox{Dirichlet case} \\
  \tilde{C} = &  \| \partial_x \gamma_R \|_{L^1(\R)} \max(du,dv) \quad \mbox{Neumann case} \\
  \tilde{D}= & \frac{\tilde{C}}{2} \left( \left(3+\sqrt{\frac{C}{A}}\right ) \left( \| u(0) \|_{L^2(\Omega)} + \| v(0) \|_{L^2(\Omega)} \right)   + \sqrt{B/A}+ 2\sqrt{| \Omega|} \right).
  \end{align*}
  with $A,B,C>0$ defined in Lemma \ref{l:bounds}.
 \end{Lemma}
 
 \begin{proof}
 Because the proof is similar for both systems, we illustrate the analysis using only \eqref{e:weakfinite1}.
 As in the previous proof, to simplify our notation we drop the superscript $m$, and just write $u, v, u_x,$ and $v_x$. 
We also let $\| \cdot \|= \| \cdot \|_{L^2(\Omega)}$ and $(\cdot, \cdot ) =(\cdot, \cdot)_{L^2(\Omega)}$.
We then multiply  the third and fourth equations of system \eqref{e:weakfinite1}  by $\tilde{d}_k$ and $\tilde{e}_k$, respectively, and add these from $k=1$ through $m$. This gives us two new equations that we then add together to
find,
\begin{align*}
\frac{1}{2} \frac{d}{dt} \| u_x\|^2 + \frac{1}{2} \frac{d}{dt} \| v_x\|^2  + &  d_u \Gamma \| u_x \|^2  + d_v\Gamma \|v_x\|^2
=  d_u (J_D u, u_x) + d_v(J_Dv, v_x) \\
& -( \sigma'(u) \sigma(v)^2 u_x, u_x) -2 (\sigma(u) \sigma(v) \sigma'(v) v_x, u_x) - f \| u_x\|^2\\
& + ( \sigma'(u) \sigma(v)^2 u_x, v_x) +2 (\sigma(u) \sigma(v) \sigma'(v) v_x, v_x) - (f+\kappa) \| v_x\|^2.
\end{align*}
 
 Because the constants $\Gamma, d_u,$ and $ d_v$ are nonnegative and  since both  terms, 
$ (\sigma'(u) \sigma(v)^2 u_x ,u_x)$ and $  \kappa \|v_x\|^2$, are positive, using equation \eqref{e:sigma}, we obtain
 \begin{align*}
\frac{1}{2} \frac{d}{dt} \| u_x\|^2 + \frac{1}{2} \frac{d}{dt} \| v_x\|^2   + f( \|u_x\|^2 + \|v_x\|^2)
& \leq  d_u | (J_Du,u_x)| + d_v |(J_Dv,v_x)|\\
& + \; 2 | (\sigma(u) \sigma(v) \sigma'(v) v_x, u_x) | + | ( \sigma'(u)  \sigma(v)^2 u_x, v_x) | \\
\nonumber
&+2 |(\sigma(u) \sigma(v) \sigma'(v) v_x, v_x)|,\\[1ex]
\leq 
&\;  \| \partial_x \gamma\|_{L^1(\R)} \max(d_u,d_v) \left( \|u_x \| \|u\| + \|v_x \| \|v\| \right)\\
& \;+  8M^2 (v_x,u_x) +  4M^2  (u_x,v_x) + 8 M^2 \|v_x\|^2\\[1ex]
\leq & \; \left(\frac{\tilde{C}}{2} + 14 M^2\right) ( \|u_x\|^2 + \|v_x\|^2) + \frac{\tilde{C}}{2} (\|u\|^2 + \|v\|^2)\\
\leq & \; \left(\frac{\tilde{C}}{2} + 14 M^2\right) ( \|u_x\|^2 + \|v_x\|^2) + \tilde{D}.
\end{align*}
 Here, the second inequality comes from applying Cauchy-Schwartz to the integral terms, followed by Young's inequality for convolutions. 
 The third inequality then results from applying the AM-GM inequality to the terms $(u_x,v_x)$, $\|u\| \|u_x\|$, and  $\|v\|\|v_x\|$ and the definition of $\tilde{C}$ stated in this lemma. Finally, in the last line we pick the constant $\tilde{D}$ so that
 \[\sup_{t \in [0,T]}   \frac{\tilde{C}}{2} (\| u(t)\|^2 + \| v(t)\|^2) \leq \tilde{D} =  \frac{\tilde{C}}{2}\left[ \left(3+\sqrt{\frac{C}{A}}\right ) \left( \| u(0) \|_{L^2(\Omega)} + \| v(0) \|_{L^2(\Omega)} \right)   + \sqrt{B/A}+ 2\sqrt{| \Omega|} \right].\]
 Here, the inequality follows from Lemma \ref{l:bounds}, and the constants $A,B,C>0$ are also defined as in this Lemma.
 
 Next, letting $w(t) = \| u_x(t)\|^2 + \| v_x(t)\|^2$ and $2E= \left(\tilde{C} + 28 M^2 -2f \right)>0$, the above inequality can be written as
\begin{align*}
 \frac{d}{dt} w - 2E w \leq \tilde{D},\\
 \frac{d}{dt} \left[ w \rme^{-2Et} \right] \leq \tilde{D} \rme^{-2Et},
 \end{align*}
since $\rme^{-2Et} >0$.
It then follows from Gronwall's inequality that
\[ w(t) \leq \left (w(0) + \frac{\tilde{D}}{2E} - \frac{\tilde{D}}{2E} \rme^{-2Et} \right ) \rme^{2Et} \leq \left(w(0) + \frac{\tilde{D}}{2E}\right) \rme^{2ET}. \]
 Given that $|a| + |b| \leq \sqrt{2} \sqrt{ a^2 + b^2} \leq \sqrt{2} (|a| + |b|)$ for any $a,b\in \R$,
this then leads to the desired result,
\[ \| u_x(t)\| + \| v_x(t)\| \leq \sqrt{2} \left( \| u_x(0)\| + \| v_x(0)\| + \sqrt{\frac{\tilde{D}}{2E}}\right) \rme^{E T}.\]
\end{proof}

We are now ready to prove the main result of this section. 

\begin{Proposition}\label{p:infinitybound}
Consider \blue{systems \eqref{e:weakfinite1} and \eqref{e:weakfinite1N} } with $\sigma$ the identity map, and initial conditions $u_0,v_0,u_{x,0},v_{x,0}$ satisfying,
$u_{x,0} = \partial_x u_0, v_{x,0} =\partial_x v_0 $. Then, there exists positive constants $C_1, C_2$, 
\blue{ and a time $T$}, such that if
\[ \|u_0\|_{L^2(\Omega)} + \| v_0\|_{L^2(\Omega)} < C_1, \qquad \| u_{x,0} \|_{L^2(\Omega)} + \| v_{x,0}\|_{L^2(\Omega)} < C_2,\]
 solutions, $u^m,v^m,u^m_x,v^m_x$, \blue{to equations \eqref{e:weakfinite1} or \eqref{e:weakfinite1N} }  of the form \eqref{e:galerkin} exist for $t \in [0,T]$, are unique, and remain bounded in the $L^2(\Omega)$ norm, uniformly in $m$. Moreover, $u^m_x =\partial_x u^m, v_x = \partial_x v^m $ on $\Omega$, for all $t \in [0,T]$, and
\[ \| u^m(t) \|_\infty + \| v^m(t) \|_\infty <\infty ,\qquad \forall t\in [0,T ] , \; \forall m \in \N. \]
\end{Proposition}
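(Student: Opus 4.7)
The plan is to work with the truncated systems \eqref{e:weakfinite1}--\eqref{e:weakfinite1N}, establish uniform-in-$m$ bounds on their Galerkin approximations, and then show that for sufficiently small initial data the cutoff $\sigma$ remains inactive, so the truncated and untruncated ($\sigma = \rmId$) systems coincide. I would carry out the Dirichlet and Neumann cases in parallel since the structural logic is identical.

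As a first step, fix a truncation level $M>0$ to be chosen later. The ODE system for the coefficient vector $\langle d_k, e_k, \tilde d_k, \tilde e_k\rangle$ has a Lipschitz vector field with constant depending on $M$ but independent of $m$, so the Picard--Lindel\"of theorem yields a unique $C^1$ solution on some interval $[0,T]$ with $T=T(M)$ independent of $m$, as already noted in the paragraph preceding Lemma \ref{l:derivatives}. Lemma \ref{l:derivatives} then identifies $u^m_x = \partial_x u^m$ and $v^m_x = \partial_x v^m$ on $\Omega$; Lemma \ref{l:bounds} controls $\|u^m\|_{L^2(\Omega)} + \|v^m\|_{L^2(\Omega)}$; and Lemma \ref{l:bounds_dx} (Dirichlet) or Lemma \ref{l:bounds_dx_N} (Neumann) controls $\|u^m_x\|_{L^2(\Omega)} + \|v^m_x\|_{L^2(\Omega)}$. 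Combining these yields a uniform bound of the form
\[
 \|u^m(t)\|_{H^1(\Omega)} + \|v^m(t)\|_{H^1(\Omega)} \le \mathcal{K}_1 + \mathcal{K}_2,
\]
where $\mathcal{K}_1$ carries an initial-data-independent baseline $\sqrt{B/A}+2\sqrt{|\Omega|}$ from Lemma \ref{l:bounds}, and $\mathcal{K}_2$ carries the exponential factor $\rme^{(14M^2-f)T}$ from Lemma \ref{l:bounds_dx} (with an analogous exponential arising in the Neumann estimate).

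In the second step, the one-dimensional Sobolev embedding $H^1(\Omega)\hookrightarrow L^\infty(\Omega)$ upgrades the previous estimate to
\[
 \|u^m(t)\|_{L^\infty(\Omega)} + \|v^m(t)\|_{L^\infty(\Omega)} \le C_S(\mathcal{K}_1 + \mathcal{K}_2), \qquad t\in[0,T],\; m\in\N.
\]
I then select the quantifiers in the order: first pick $M$ large enough that $C_S$ times the $\mathcal{K}_1$ baseline is below $M/4$; next shrink $T\le T(M)$ so the exponential in $\mathcal{K}_2$ is of moderate size; finally choose the thresholds $C_1, C_2$ so that the initial-data-dependent pieces of $\mathcal{K}_1$ and of $\mathcal{K}_2$ each stay below $M/4$. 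With these choices $\|u^m(t)\|_\infty, \|v^m(t)\|_\infty < M$ on $[0,T]$, hence $\sigma(u^m)=u^m$ and $\sigma(v^m)=v^m$ pointwise on $\Omega$. Consequently the Galerkin approximations produced for the truncated systems \eqref{e:weakfinite1}--\eqref{e:weakfinite1N} also solve the untruncated systems obtained by taking $\sigma = \rmId$, and uniqueness on $[0,T]$ is inherited from the Picard--Lindel\"of step.

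The main obstacle is the ordering of quantifiers in this closure: $\mathcal{K}_2$ depends on $M$ through the factor $\rme^{(14M^2-f)T}$, so na\"ively enlarging $M$ inflates the derivative bound rather than helping. Fixing $M$ first (to absorb the $\mathcal{K}_1$ baseline), then shrinking $T$, and only then restricting $C_1, C_2$, is what makes the bootstrap close. A secondary point worth flagging is that Lemma \ref{l:derivatives} identifies $u^m_x$ with $\partial_x u^m$ only on $\Omega$, but that is exactly the region on which the Sobolev embedding is applied and on which the $L^\infty$ bound is needed; no claim is or needs to be made about regularity of $u^m, v^m$ across $\partial\Omega$.
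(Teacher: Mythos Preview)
Your proposal is correct and follows essentially the same approach as the paper: work with the $\sigma$-truncated system, invoke Lemmas \ref{l:derivatives}, \ref{l:bounds}, and \ref{l:bounds_dx}/\ref{l:bounds_dx_N}, apply the one-dimensional Sobolev embedding, and then close the bootstrap by choosing $M$, $T$, and the initial-data thresholds so that $\|u^m\|_\infty + \|v^m\|_\infty < M$, forcing $\sigma = \rmId$. The only minor difference is that the paper distinguishes the two cases more explicitly: in the Dirichlet case it keeps $T = \tilde T$ (the Picard--Lindel\"of time) and absorbs the exponential $\rme^{(14M^2-f)\tilde T}$ entirely into the smallness of $C_2$, whereas in the Neumann case it must shrink $T<\tilde T$ because the derivative bound from Lemma \ref{l:bounds_dx_N} carries an additional baseline term $\sqrt{2\tilde D/E}$ that itself gets multiplied by the exponential; your uniform ``shrink $T$'' step handles both cases at once and is harmless.
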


\begin{proof}
Consider the system \eqref{e:weakfinite1} or \eqref{e:weakfinite1N}, with $\sigma$ as in \eqref{e:sigma} and with
 initial conditions as stated in this proposition.
Then, as shown in the introduction to this section, for any $M$ in the definition of the map $\sigma$,
there is a time $\tilde{T}$ such that this system has unique solutions, \blue{ $u^m, v^m, u^m_x, v^m_x$,}  
valid on $[0,\tilde{T}]$ and satisfying 
\blue{ $\partial_x u^m, = u^m_x,  \partial_x v^m = v^m_x$,}  (see Lemma \ref{l:derivatives}).
Lemmas \ref{l:bounds} and \ref{l:bounds_dx} then shown that these solutions are  bounded in the
$L^2(\Omega)$ norm for all $t \in [0,\tilde{T}]$, and that this bound is independent of $m$.

In addition, from Sobolev embeddings we know
that there is a constant $\bar{C}>0$, depending only on the domain $\Omega$, 
such that if $u^m(t)$ and $v^m(t)$ are in $H^1(\Omega)$ for all $t \in [0,\tilde{T}]$, then
\begin{align*}
 \| u^m(t)\|_\infty+ \| v^m(t) \|_\infty &\leq \tilde{C} ( \| u^m (t)\|_{H^1(\Omega)} + \| v^m(t) \|_{H^1(\Omega)}) \\
 & \leq  \underbrace{  \bar{C} \Big[ \| u^m(t) \|_{L^2(\Omega)} + \| v^m(t) \|_{L^2(\Omega)} \Big]}_{D_1} 
 +  \underbrace{ \bar{C} \Big[ \| u^m_x(t) \|_{L^2(\Omega)} + \| v^m_x(t) \|_{L^2(\Omega).} \Big] }_{D_2}
\end{align*}
Lemmas \ref{l:bounds} and \ref{l:bounds_dx} then give us the following bounds for $D_1$ and $D_2$,

 \begin{align*}
D_1/\bar{C} \leq &  \left(3 +\sqrt{\frac{C}{A}} \right) \left( \| u_0 \|_{L^2(\Omega)} + \| v_0 \|_{L^2(\Omega)} \right) + 2\sqrt{| \Omega|} + \sqrt{B/A} \\
D_2/\bar{C} \leq & \sqrt{2} \left( \| u_{x,0}\|_{L^2(\Omega)} + \| v_{x,0}\|_{L^2(\Omega)} +  \sqrt{ \frac{ \tilde{D}}{2E}   } \right) \rme^{ ( \tilde{C}/2 +14M^2  - f) \tilde{T}},
\end{align*}
Here the constants $A,B$ and $C$ are defined in Lemma \ref{l:bounds}, while the constants $\tilde{C}, \tilde{D}$, and $E$ are given in Lemma \ref{l:bounds_dx}. After rearranging, these bounds lead to
\begin{align*}
 \| u^m(t)\|_\infty+ \| v^m(t) \|_\infty & \leq  \bar{C} \left(3 +\sqrt{\frac{C}{A}} \right) \left( \| u_0 \|_{L^2(\Omega)} + \| v_0 \|_{L^2(\Omega)} \right)\\
 & + \bar{C} \sqrt{2} \left( \| u_{x,0}\|_{L^2(\Omega)} + \| v_{x,0}\|_{L^2(\Omega)} \right) \rme^{ ( \tilde{C}/2 +14M^2  - f) \tilde{T}} \\
 & + \bar{C} \left(   2\sqrt{| \Omega|} + \sqrt{B/A}  + \sqrt{ \frac{ \tilde{D}}{2E}} \rme^{ ( \tilde{C}/2 +14M^2  - f) \tilde{T}} \right).
 \end{align*}

We can then pick 
 $L^2(\Omega)$ bounds for the initial conditions, $u_0,v_0,u_{x,0}, v_{x,0}$, a constant $M$ and a time $T < \tilde{T}$,  such that each term in the right hand side of the above expression is less that $M/3$. Consequently,
\[
 \| u^m(t)\|_\infty+ \| v^m(t) \|_\infty < M\]
as desired.
\end{proof}

Given the results of Proposition \ref{p:infinitybound}, from now on we may consider only systems \eqref{e:weakform} and \eqref{e:weakformN}.
In addition, whenever we reference 
equations \eqref{e:weakfinite1} or equations \eqref{e:weakfinite1N}, we will always assume that the map $\sigma$ is given by the identity.

\subsection{Convergence of Approximating Sequences}\label{ss:bounds}
Our goal in this section is to show that the sequence of approximating solutions 
$\{ u^m, v^m\}$ has a strongly convergent subsequence in $L^2(0,T,L^2(\Omega))$.
To do this, we first  establish some additional energy estimates and then use Aubin's compactness theorem.

The following proposition shows that each sequence
$\{u^m\},\{v^m\},\{u_x^m\},\{v_x^m\},$ is
bounded in $L^2(0,T,V)$ uniformly in $m$. 
Moreover, it also establishes that each sequence of time derivatives, 
$\{u_t^m\},\{v_t^m\},\{(u_x)_t^m\},\{(v_x)_t^m\},$ is
is also bounded in $L^2(0,T,V')$, and in $L^2(0,T,H^{-1}(\Omega))$.

\begin{Proposition}\label{p:energybounds}
For any integer $m $, let $( u^m,v^m,u^m_x,v^m_x )$ denote the solutions to  system \eqref{e:weakfinite1}
\blue{ or \eqref{e:weakfinite1N},}
of the form \eqref{e:galerkin}.
Then, there exist a  
positive constant $D_1$,
 depending on $\| u_0\|_{L^2(\Omega)},$ $ \|v_0\|_{L^2(\Omega)},$
 $ \| u_{x,0}\|_{L^2(\Omega)}, $ $\| v_{x,0} \|_{L^2(\Omega)}$, the parameters $ |\Omega|, d_u,d_v, f,k$, 
 and a  time $T,$
   such that  
\[\begin{array}{c c c}
 \|u^m\|_{L^2( 0,T, V)} &
+ & \|u_x^m\|_{L^2( 0,T, V)} 
 \leq D_1, \\
 \|v^m\|_{L^2( 0,T, V)} &
+ &\|v_x^m\|_{L^2( 0,T, V)} 
 \leq D_1,
 \end{array}
 \qquad
 \begin{array}{c c c}
\|u^m_t\|_{L^2( 0,T, V')} &
+ & \|(u_x^m)_t\|_{L^2( 0,T, V')} 
  \leq D_1,\\
\|v^m_t\|_{L^2( 0,T, V')} &
+ & \|(v_x^m)_t\|_{L^2( 0,T, V')}  
 \leq D_1,\\
\end{array}
\]
where $V = V_D$ in the case of nonlocal Dirichlet boundary conditions, and $V= V_N$ in the case of nonlocal Neumann boundary conditions. 

In addition, we also obtain the existence of a constant $D_2>0$,
depending on the initial conditions, the system parameters, and a time $T$, such that
\[
\begin{array}{ c c}
\|u^m_t\|_{L^2( 0,T, H^{-1}(\Omega))}  & \leq D_2,\\
\|v^m_t\|_{L^2( 0,T, H^{-1}(\Omega))}  & \leq D_2.\\
\end{array}
\]
\end{Proposition}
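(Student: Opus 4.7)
The strategy has three distinct parts. First, I turn the pointwise-in-time $L^2(\Omega)$ estimates obtained earlier into space-time bounds. Second, I use the Galerkin weak formulation itself, together with the $L^\infty$ control from Proposition \ref{p:infinitybound}, to bound each time derivative as a linear functional on $V$. Third, I derive the $H^{-1}(\Omega)$ estimate as a consequence of the continuous embedding $V'\hookrightarrow H^{-1}(\Omega)$.

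For the first part, Lemma \ref{l:bounds} yields a uniform-in-$m$ bound on $\|u^m(t)\|_{L^2(\Omega)}+\|v^m(t)\|_{L^2(\Omega)}$ valid on $[0,T]$, while Lemmas \ref{l:bounds_dx} and \ref{l:bounds_dx_N} provide analogous bounds for $\|u_x^m(t)\|_{L^2(\Omega)}+\|v_x^m(t)\|_{L^2(\Omega)}$ in the Dirichlet and Neumann cases respectively. Squaring and integrating over $[0,T]$ produces the desired $L^2(0,T,L^2(\Omega))$ bounds; the norm equivalence between $L^2(\Omega)$ and $V$ (trivial for $V_D$, and Proposition \ref{p:burkovska} for $V_N$) then upgrades these to $L^2(0,T,V)$ bounds.

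For the $V'$ bound on $u_t^m$, I fix $t$, take an arbitrary $\phi\in V$ with $\|\phi\|_V\le 1$, and let $P_m\phi$ be the $L^2(\Omega)$-orthogonal projection of (the restriction to $\Omega$ of) $\phi$ onto $V^m=\mathrm{span}\{\phi_1,\dots,\phi_m\}$; in the Neumann case $P_m\phi$ is then extended to $\tilde\Omega$ by the exterior-problem recipe of Lemma \ref{l:exteriorproblem}. Since $u_t^m\in V^m$ one has $\langle u_t^m,\phi\rangle_{V'\times V}=(u_t^m,P_m\phi)_{L^2(\Omega)}$, and by linearity of the Galerkin equations this equals $-d_u B_{D,N}[u^m,P_m\phi]+(-u^m(v^m)^2+f(1-u^m),P_m\phi)_{L^2(\Omega)}$. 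Boundedness of $B_{D,N}$ (Lemma on $B_D$ and Lemma \ref{l:BN}), the $L^\infty$ bound $\|u^m(t)\|_\infty+\|v^m(t)\|_\infty\le M$ from Proposition \ref{p:infinitybound}, and $\|P_m\phi\|_V\le C\|\phi\|_V$ then give
\begin{equation*}
\|u_t^m(t)\|_{V'}\le C\bigl(\|u^m(t)\|_{L^2(\Omega)}+\|u^m(t)\|_\infty\|v^m(t)\|_\infty^2|\Omega|^{1/2}+f|\Omega|^{1/2}\bigr),
\end{equation*}
and a time integration produces the $L^2(0,T,V')$ bound from the bounds already established. The same argument handles $v_t^m$ verbatim. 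For $(u_x^m)_t$ and $(v_x^m)_t$ I proceed identically, using the third and fourth equations of \eqref{e:weakfinite1} or \eqref{e:weakfinite1N}; the only new ingredient in the Neumann case is the term $d_u(Gu^m,P_m\phi)_{L^2(\Omega)}$, which is controlled by $\|\partial_x\gamma_R\|_{L^1(\R)}\|u^m\|_{L^2(\tilde\Omega)}\|P_m\phi\|_{L^2(\Omega)}$ via Young's inequality, with the $L^2(\tilde\Omega)$ factor absorbed into the $L^2(\Omega)$ bound through norm equivalence.

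The $H^{-1}(\Omega)$ estimate for $u_t^m$ and $v_t^m$ is then immediate: since $H^1_0(\Omega)$ embeds continuously (by extension by zero) into $V_D$, and into $V_N$ after combining extension by zero with Poincaré's inequality on $\Omega$, we have $V'\hookrightarrow H^{-1}(\Omega)$ continuously, so the $L^2(0,T,V')$ bound furnishes the $L^2(0,T,H^{-1}(\Omega))$ bound with $D_2$ proportional to $D_1$. The main delicate point, and the only place where compact support of $\gamma$ in the Neumann case is used, is in bounding the $(Gu^m,P_m\phi)$ terms; everything else reduces to the boundedness of $B_{D,N}$, the $L^\infty$ bound of Proposition \ref{p:infinitybound}, and the Lemmas \ref{l:bounds}--\ref{l:bounds_dx_N}.
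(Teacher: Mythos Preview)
Your proof follows essentially the same strategy as the paper's: integrate the pointwise-in-time $L^2(\Omega)$ bounds from Lemmas \ref{l:bounds}--\ref{l:bounds_dx_N} to obtain the $L^2(0,T,V)$ estimates, then use the $L^2(\Omega)$-orthogonal projection onto $\mathrm{span}\{\phi_k\}_{k=1}^m$ together with boundedness of $B_{D,N}$ and the $L^\infty$ control from Proposition \ref{p:infinitybound} to bound the time derivatives in $V'$. The one notable difference is in the $H^{-1}(\Omega)$ estimate: the paper re-runs the $V'$ argument with test functions drawn from $H^1(\Omega)$, relying on the observation that $\{\phi_k\}$ is also a basis there and that $H^1(\Omega)\subset L^2(\Omega)\subset H^{-1}(\Omega)$ forms a Gelfand triple; you instead deduce it from a dual embedding $V'\hookrightarrow H^{-1}(\Omega)$. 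Your route is more direct, but your justification in the Neumann case is not quite right: extension by zero of $\phi\in H^1_0(\Omega)$ does \emph{not} land in $V_N$, since the extended function need not satisfy $K\phi=0$ on $\Omega_o$. The clean fix is to invoke the norm equivalence $V_N\cong L^2(\Omega)$ from Proposition \ref{p:burkovska} directly, which gives $V_N'\cong L^2(\Omega)\hookrightarrow H^{-1}(\Omega)$, or equivalently to use the exterior-problem extension of Lemma \ref{l:exteriorproblem} in place of extension by zero.
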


\begin{proof}

Recall first that the spaces $V_D$ and $V_N$ are equivalent to $L^2(\Omega)$.
The $L^2(0,T,V)$ bounds for $u^m,v^m,u_x^m,$ and $ v^m_x$ then
follow from Lemmas \ref{l:bounds} and \ref{l:bounds_dx}. We therefore only need to show 
that the time derivatives, $u^m_t, v^m_t , (u^m_x)_t,$ and $ (v^m_x)_t$ 
are bounded in $L^2(0,T, V')$, and that $u^m_t$ and $ v^m_t$ are bounded in $L^2(0,T, H^{-1}(\Omega))$.
We start with the $L^2(0,T, V')$ bounds.

As in  previous proofs, we drop the superscript $m$ from our notation and use $\| \cdot \| = \| \cdot \|_{L^2(\Omega)}$.
We let $\psi$ be an element in the space $V$ with $\| \psi\|=1$.
Using the finite set $\{ \phi_k\}_{k=0}^{2m}$ we decompose this function into two parts, 
$\psi = \psi_1 + \psi_2$, with $\psi_1 \in \spann\{ \phi_k\}$, and $\psi_2 \in (\spann\{ \phi_k\})^\perp$. 
It is then clear that $\| \psi_1 \|\leq \| \psi\| = 1$.

 Starting with the Dirichlet case, and looking at the first equation in \eqref{e:weakfinite1}, we obtain
\[ \langle u_t, \psi \rangle_{V'\times V} = ( u_t, \psi_1)_{L^2(\Omega)} = (F^{(1)}(u,v), \psi_1)_{L^2(\Omega)} - d_u B_{D}[ u,\psi_1], \]
where the first equality follows from $V \subset L^2(\Omega) \subset V'$ being a Gelfand triple, and the definition of $u = u^m$. Notice that the $L^2(0,T, V')$ bounds for $u_t$
 follow from finding a constant $\alpha>0$ such that
\[ (F^{(1)}(u,v), \psi_1)_{L^2(\Omega)} \leq \alpha \|\psi_1\|,\]
holds for all $t \in [0,T]$. Indeed, if this condition is satisfied, then
\[
| \langle u_t, \psi \rangle_{V'\times V} |   \leq \alpha \| \psi_1\| + d_u \| u \| \| \psi_1\| \leq   \alpha + d_u \| u \|. 
\]
As a result,
\[\| u_t\|_{V'}   \leq  \alpha + d_u \| u \|, \]
and it then follows that,
\[
\int_0^T \| u_t \|^2_{V'} \;dt  \leq \int_0^T   (\alpha^2  + d^2_u \| u \|^2 ) 
 \leq  T  (\alpha^2  + d^2_u ( \| u(0)\|^2 + | \Omega|) \leq D,\]
where the  second line follows from expression \eqref{e:boundu} in the proof of Lemma \ref{l:bounds} and $D$ represents a positive constant. 

Similarly, with the same notation as above we can write the third equation in \eqref{e:weakfinite1} as
\[ \langle (u_x)_t, \psi \rangle_{V'\times V} = ( u_t, \psi_1)_{L^2(\Omega)} = (F^{(1)}(u,v), \psi_1)_{L^2(\Omega)} - d_u \Gamma (u_x, \psi_1) + d_u (J_D u, \psi_1),
\]
from which it follows that
\[ | \langle (u_x)_t , \psi \rangle | \leq \alpha \|\psi_1\| + d_u \Gamma \|u_x\| \|\psi_1 \|+ 
d_u \| \partial_x \gamma \|_{L^1(\R)} \|u\| \|\psi_1\|. \]
Consequently,
\[ \|(u_x)_t\|_{V'} \leq \alpha + d_u \Gamma \|u_x\| +d_u \| \partial_x \gamma \|_{L^1(\R)} \|u\| . \]
and 
\begin{align*}
\int_0^T \| u_t \|^2_{V'} \;dt & \leq 4 \int_0^T   (\alpha^2  + d^2_u \Gamma^2 \| u_x \|^2 
+ d_u^2\| \partial_x \gamma \|^2_{L^1(\R)} \|u\|^2  ) \\
\int_0^T \| u_t \|^2_{V'} \;dt & \leq 4 T  \Big (\alpha^2  + d^2_u \Gamma^2 \bar{D}+ d_u^2\| \partial_x \gamma_R \|^2_{L^1(\R)}  \left ( \| u(0)\|^2 + | \Omega| \right)\Big)  \\
\int_0^T \| u_t \|^2_{V'} \;dt & \leq D,
\end{align*}
where as before the constant $D>0$ represents a bound for the right hand side appearing in the second line, the bounds on $\|u\|$ follow from expression \eqref{e:boundu} in the proof of Lemma \ref{l:bounds},
and the constant $\bar{D}$ corresponds to the bound for $\|u_x\|^2$ and is based on the results stated in Lemma \ref{l:bounds_dx}.

Notice that a similar analysis gives the bounds for the remaining quantities $v_t, (v_x)_t$, as well as for the solutions to \eqref{e:weakfinite1N},
 provided we
 find bounds for all nonlinear terms $F^{(i)}, G^{(i)}$, $i =1,2$.
Because the details are very similar in all cases, we only show the details for the nonlinear terms $F^{(1)}$ and $G^{(1)}$.

First, thanks to H\"older's inequality,
\begin{align*}
| (F^{(1)}(u,v), \psi_1 )_{L^2(\Omega)} | & = ( -uv^2 + f(1-u) , \psi_1)_{L^2(\Omega)} \\
& \leq \|u \|_\infty \|v\|_\infty \|v \| \| \psi_1\| + f  |\Omega|^{\frac{1}{2}} \| \psi_1 \| + f \| u\| \| \psi_1\|\\
& \leq ( \|u \|_\infty \|v\|_\infty \|v \| + f |\Omega| ^{\frac{1}{2}}+ f \| u\| ).
\end{align*}

Using Proposition \ref{p:infinitybound} and Lemma \ref{l:bounds}, we then find that the nonlinearity $F^{(1)}$ is indeed
bounded by a constant that depends on the initial conditions and the parameters of the problem.
Similarly,
\begin{align*}
| (G^{(1)}(u,v,u_x,v_x), \psi_1 )_{L^2(\Omega)}|  & = ( -u_xv^2  -2 uv v_x - f u_x , \psi_1)_{L^2(\Omega)} \\
& \leq  \| v\|_\infty^2 \| u_x \| \| \psi_1 \| + 2 \| u\|_\infty \| v\|_\infty \|v_x \| \| \psi_1 \| + f\| u_x \| \| \psi_1\|\\
& \leq  ( \| v\|_\infty^2 \| u_x \| + 2 \| u\|_\infty \| v\|_\infty \|v_x \|  + f\| u_x \| ).
\end{align*}
and the result again follows from Proposition \ref{p:infinitybound} and Lemma \ref{l:bounds_dx}.

Finally, we show that the time derivatives $u^m_t , v^m_t$ \blue{of the solutions $u^m, v^m$ to equations \eqref{e:weakform} and \eqref{e:weakformN},}  are also bounded in $L^2(0,T, H^{-1}(\Omega))$.
This result follows from {\color{blue} Lemma \ref{l:derivatives}  }
which shows that the solution vector satisfies 
\[u^m_x = \partial_x u^m, \quad v^m_x= \partial_x v^m,\]
on $\Omega$, together with the fact that the set $\{\phi_k\}$ is also a basis for $H^1(\Omega)$ and that the space  $H^1(\Omega) \subset L^2(\Omega) \subset H^{-1}(\Omega)$. As a result, the above proofs also hold in the case when $V' = H^{-1}(\Omega)$.
\end{proof}

Using the results from the above proposition, together with Alaogu's theorem, we may now extract  weakly convergent subsequences, which for simplicity we do not relabel:
\begin{equation}\label{e:c1}
\begin{array}{l r}
u^m \rightharpoonup \bar{u} &  \\[-1ex]
& \quad \mbox{in} \qquad L^2(0,T, V),\\
v^m \rightharpoonup \bar{v} &  
\end{array}
\hspace{10ex}
\begin{array}{l r}
u^m_t \rightharpoonup \bar{u}_t &  \\[-1ex]
& \quad \mbox{in} \qquad L^2(0,T, V'),\\
v^m_t \rightharpoonup \bar{v}_t &  
\end{array}
\end{equation}

 \begin{equation}\label{e:c2}
\begin{array}{l r}
u_x^m \rightharpoonup \bar{u}_x&  \\[-1ex]
& \quad \mbox{in} \qquad L^2(0,T, V),\\
v_x^m \rightharpoonup \bar{v}_x &  
\end{array}
\hspace{8ex}
\begin{array}{l r}
(u_x^m)_t \rightharpoonup (\bar{u}_x)_t &  \\[-1ex]
& \quad \mbox{in} \qquad L^2(0,T, V').\\
(v^m_x)_t \rightharpoonup (\bar{v}_x)_t &  
\end{array}
\end{equation}

Notice that because the spaces $V = V_D,V_N$ are equivalent to $L^2(\Omega)$, then Proposition \ref{p:infinitybound} implies
 that when restricted to the domain $\Omega$ the sequences $\{ u^m\}, \{v^m\},$ are also bounded in 
 $L^2(0,T,H^1(\Omega))$.  
Therefore, we may extract weakly convergent subsequences  in $L^2(0,T, H^1(\Omega))$,
and in the case of the time derivatives in $L^2(0,T, H^{-1}(\Omega))$. That is,
\begin{equation}\label{e:c3}
\begin{array}{l r}
u^m \rightharpoonup \bar{u} &  \\[-1ex]
& \quad \mbox{in} \qquad L^2(0,T, H^1(\Omega)),\\
v^m \rightharpoonup \bar{v} &  
\end{array}
\hspace{10ex}
\begin{array}{l r}
u^m_t \rightharpoonup \bar{u}_t &  \\[-1ex]
& \quad \mbox{in} \qquad L^2(0,T, H^{-1}(\Omega)).\\
v^m_t \rightharpoonup \bar{v}_t &  
\end{array}
\end{equation}
This last point, combined with Aubin's compactness theorem (see \cite{aubin1963} and the Appendix), then implies that 
there is a further subsequence which converges strongly in $L^2(0, T, L^2(\Omega))$.

\subsection{Main Theorem}\label{ss:main}
We are now ready to show the existence of solutions to the weak formulation of our problem.
More precisely we prove the following theorem.
\begin{Theorem**}  
Let $u_0,v_0,\partial_x u_0,\partial_x v_0$ be in $L^2(\Omega)$. Then,
there exists positive constants $C_1,C_2$ and  $T$, such that if
\[ \| u_0\|_{L^2(\Omega)} + \| v_0\|_{L^2(\Omega)} <C_1,\quad \| \partial_x u_0\|_{L^2(\Omega)} + \| \partial_x v_0\|_{L^2(\Omega)} <C_2,\]
 the system of equations
\begin{align} 
\label{e:uweak}
\langle u_t, \phi \rangle_{(V' \times V)} + d_u B_{D,N} [ u, \phi] & =  ( - uv^2 + f(1-u), \phi)_{L^2(\Omega)},\\
\label{e:vweak}
\langle v_t, \phi \rangle_{(V' \times V)} + d_v B_{D,N} [ v, \phi] & =  (  uv^2 - (f+\kappa )v), \phi)_{L^2(\Omega)},
\end{align}
has a unique weak solution  $(u,v) \in [L^2(0,T, H^1(\Omega)] \times [L^2(0,T, H^1(\Omega)] $ 
valid on the time interval $[0,T]$,
and satisfying $u(x,0) = u_0$ and $v(x,0)=v_0$.

\end{Theorem**}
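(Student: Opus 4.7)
The plan is to combine the Galerkin setup from Subsection~\ref{ss:approx}, the uniform bounds of Proposition~\ref{p:energybounds}, and the weak/strong compactness summarized in \eqref{e:c1}--\eqref{e:c3} to pass to the limit in the discrete equations and identify $(\bar u, \bar v)$ as the required weak solution. Existence follows from passing to the limit; uniqueness follows from a standard energy estimate for the difference of two solutions.

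First I would fix an integer $N$ and a test function of the form $\phi(x,t) = \sum_{k=1}^{N} c_k(t) \phi_k(x)$, with $c_k \in C^1([0,T])$. For every $m \geq N$, multiplying the Galerkin equations of \eqref{e:weakfinite1} or \eqref{e:weakfinite1N} by $c_k(t)$, summing in $k$, and integrating in time produces, schematically,
\begin{equation*}
\int_0^T \langle u^m_t, \phi\rangle_{V'\times V}\,dt + d_u \int_0^T B_{D,N}[u^m,\phi]\,dt
= \int_0^T\bigl(-u^m(v^m)^2 + f(1-u^m),\,\phi\bigr)_{L^2(\Omega)}\,dt,
\end{equation*}
and analogously for $v^m$. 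The time-derivative pairing passes to the limit because $u^m_t \rightharpoonup \bar u_t$ weakly in $L^2(0,T,V')$, and the nonlocal bilinear form passes because $B_{D,N}[\,\cdot\,,\phi]$ defines a bounded linear functional on $L^2(0,T,V)$ (by the boundedness results in Section~\ref{s:preliminaries}) while $u^m \rightharpoonup \bar u$ weakly in $L^2(0,T,V)$. The linear term $f(1-u^m)$ and the term $(f+\kappa)v^m$ are handled identically.

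The crux is the cubic nonlinearity $u^m (v^m)^2$. Here I would use the second half of \eqref{e:c3}: the sequences $\{u^m\},\{v^m\}$ are bounded in $L^2(0,T,H^1(\Omega))$ while $\{u^m_t\},\{v^m_t\}$ are bounded in $L^2(0,T,H^{-1}(\Omega))$, so Aubin's compactness theorem yields subsequences with $u^m \to \bar u$ and $v^m \to \bar v$ strongly in $L^2(0,T,L^2(\Omega))$. Combined with the uniform $L^\infty$ bound supplied by Proposition~\ref{p:infinitybound}, a subsequence converges a.e.\ on $\Omega \times [0,T]$ with a common $L^\infty$ dominating function, so dominated convergence gives $u^m(v^m)^2 \to \bar u \bar v^2$ in $L^2(0,T,L^2(\Omega))$. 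Thus the right-hand sides pass to the limit, and a density argument in the test space $V$ yields \eqref{e:uweak}--\eqref{e:vweak} for all $\phi \in V$ and a.e.\ $t$. The initial data is attained because $\bar u, \bar v \in L^2(0,T,H^1(\Omega))$ with $\bar u_t, \bar v_t \in L^2(0,T,H^{-1}(\Omega))$ embed continuously into $C([0,T],L^2(\Omega))$, and a standard integration by parts against $\phi_k$ identifies the trace at $t=0$ with $u_0, v_0$ respectively (using the convergence of $u^m(0) \to u_0$ and $v^m(0) \to v_0$ in $L^2(\Omega)$).

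For uniqueness, suppose $(u_1,v_1)$ and $(u_2,v_2)$ are two weak solutions with the same initial data and set $U = u_1 - u_2$, $V = v_1 - v_2$. Testing the difference of the $u$-equations against $U$ and the difference of the $v$-equations against $V$, using the nonnegativity and boundedness of $B_{D,N}$, the $L^\infty$ bounds on $u_i, v_i$ from Proposition~\ref{p:infinitybound}, and the elementary algebraic identity $u_1v_1^2 - u_2 v_2^2 = U v_1^2 + u_2(v_1+v_2)V$, I would bound the nonlinear contributions by $C(\|U\|_{L^2(\Omega)}^2 + \|V\|_{L^2(\Omega)}^2)$ and apply Gr\"onwall's inequality, forcing $U \equiv V \equiv 0$ on $[0,T]$. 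The main obstacle in the whole argument is precisely the cubic term: without the auxiliary system for $u_x,v_x$ introduced in Subsection~\ref{ss:approx}, one would not have the $H^1$-bounds needed to invoke Aubin's theorem, and the weak limit of $u^m(v^m)^2$ could not be identified with $\bar u \bar v^2$.
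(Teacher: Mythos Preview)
Your existence argument is essentially identical to the paper's: fix a finite test function $w=\sum_{k\le N}a_k(t)\phi_k$, integrate the Galerkin equations in time, pass the linear and bilinear terms to the limit via weak convergence in $L^2(0,T,V)$ and $L^2(0,T,V')$, and handle $u^m(v^m)^2$ by combining Aubin's theorem (from the $H^1$/$H^{-1}$ bounds in \eqref{e:c3}) with the uniform $L^\infty$ control of Proposition~\ref{p:infinitybound} and dominated convergence; the initial-data identification via integration by parts with $w(T)=0$ is also the same.

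One remark on your uniqueness paragraph, which the paper's own proof in fact omits: Proposition~\ref{p:infinitybound} bounds the \emph{Galerkin approximations} $u^m,v^m$ in $L^\infty$, not an arbitrary second weak solution $(u_2,v_2)$. For the Gr\"onwall step you should instead use that any solution in the class $L^2(0,T,H^1(\Omega))$ satisfies $\|u_i(t)\|_{L^\infty(\Omega)}, \|v_i(t)\|_{L^\infty(\Omega)}\in L^2(0,T)$ by the one-dimensional Sobolev embedding, which yields $\tfrac{d}{dt}(\|U\|^2+\|V\|^2)\le C(t)(\|U\|^2+\|V\|^2)$ with $C\in L^1(0,T)$; Gr\"onwall with an integrable coefficient then gives $U\equiv V\equiv 0$.
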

\begin{proof}
Let $u^m,v^m \in V$ be the first two components of the solution to systems \blue{ \eqref{e:weakfinite1} and \eqref{e:weakfinite1N}} of the form \eqref{e:galerkin}. 
In the Dirichlet case we assume $V = V_D$, while in the Neumann case we let $V = V_N$, (see Definitions \ref{e:D} and \ref{e:N}, respectively).
From Propositions \ref{p:infinitybound} and \ref{p:energybounds}, 
 we know that there is a constant $T$ such that these functions are solutions to the  equations \eqref{e:uweak} and \eqref{e:vweak}
 that are bounded in 
$L^2(0, T,H^1(\Omega)) \subset L^2(0,T,V)$, 
 and with time derivatives that are bounded in 
 $L^2(0, T,V')$. 
 Thus, we may extract weakly convergent subsequences, as in \eqref{e:c1}.
 
 Fix an integer $N$ and choose a function $w \in C^1([0,T], V)$ of the form
 \begin{equation}\label{e:w}
 w = \sum_{k=0}^N a_k(t) \phi_k(x),
 \end{equation}
 with $a_k(t)$ smooth. Let $\{\phi_k\}_{k=0}^{2m}$ be a set of basis elements with $2m\geq N$.
Multiplying equations 
 \eqref{e:uweak} and \eqref{e:vweak} by $a_k(t)$, summing them from $k=1$ through $N$, and integrating with respect to time
 gives us the following system,
 which is valid for all $m$,
 \begin{align}\label{e:weaktimeintegral}
 \int_0^{T} \langle u^m_t, w \rangle + d_u B_{D,N}[u^m, w] \;dt & 
 =  \int_0^{T} (F^{(1)}(u^m,v^m,w)_{L^2(\Omega)} \;dt,\\
 \nonumber
 \int_0^{T} \langle v^m_t, w \rangle + d_v B_{D,N}[v^m, w] \;dt & 
 =  \int_0^{T} (F^{(2)}(u^m,v^m,w)_{L^2(\Omega)} \;dt.
\end{align}
It is straight forward to check that  by passing to a subsequence the left hand sides converge to
 \begin{align*}
 \int_0^{T} \langle u^m_t, w \rangle + d_u B_{D,N}[u^m, w] \;dt & \longrightarrow \int_0^{T} \langle \bar{u}_t, w \rangle  + d_u B_{D,N}[\bar{u}, w] \;dt, \\
 \int_0^{T} \langle v^m_t, w \rangle + d_v B_{D,N}[v^m, w] \;dt &\longrightarrow \int_0^{T} \langle \bar{v}_t, w \rangle  + d_v B_{D,N}[\bar{v}, w] \;dt .
 \end{align*}
Indeed, the convergence of the first terms in the two lines above follows directly from the weak convergence in $L^2(0,T, V')$ 
of the time derivatives of the approximating solutions, while a short computation, shown next,  proves the result for the second terms.

In the case of nonlocal Dirichlet boundary conditions,
\begin{align*}
B_D[u^m,w]  & = \frac{1}{2} \int_\R \int_\R (u^m(y,t) - u^m(x,t)) \gamma(x,y) (w(y,t) - w(x,t) ) \;dy\;dx\\
& = -   \int_\Omega u^m(x,t) \int_\R  (w(y,t) - w(x,t) ) \gamma(x,y)  \;dy\; dx\\
& = -  \int_{\Omega } u^m(x,t) g(x,t) \;dx ,
\end{align*}
where in the last line we defined $g(x,t) = w \ast \gamma -  \Gamma w $, with $\Gamma = \int_\R \gamma(z)\;dz>0$. We 
 check below that  $ g \in L^2(0,T, V)$, so that passing to a subsequence and taking the limit
we arrive at
$\int_0^{T} B_D[u^m,w] \rightarrow \int_0^{T}B_D[\bar{u},w]$.
Indeed, notice that
\begin{align*}
\|g\|_{L^2(0,T,L^2(\Omega))} &= \|w \ast \gamma - \Gamma w \|_{L^2(0,T,L^2(\Omega))}\\
& \leq \|w \ast \gamma \|_{L^2(0,T,L^2(\Omega))}  + \Gamma \|w \|_{L^2(0,T,L^2(\Omega))}.
\end{align*}
Since $\gamma$ is a radial function in $L^1(\R)$,
 then Young's inequality implies $$\| \gamma \ast w\|_{L^2(\Omega)} \leq \| \gamma \ast w\|_{L^2(\R)} \leq \| \gamma\|_{L^1(\R)} \|\; \| w\|_{L^2(\R)} =   \| \gamma\|_{L^1(\R)} \|\; \| w\|_{L^2(\Omega)}.$$ Since $V \simeq L^2(\Omega)$ and $w \in  L^2(0,T, V)$, it follows that $g$ is in $ L^2(0,T, V)$, as well.
 
 In the case of nonlocal Neumann boundary conditions, similar arguments show that
 $\int_0^{T}B_N[u^m, w] \rightarrow \int_0^{T} B_N[\bar{u}, w]$,
 provided that both, $u^m $ and the basis elements, $\phi_k$, in definition of $w$, are in the 
 space $V_N$ (see Lemma \ref{l:exteriorproblem}).

Next, we show that the nonlinear terms, as expressed in the right hand sides of the expression 
\eqref{e:weaktimeintegral}, also converge to the correct limit as $m$ goes to infinity.
To show that
\[ \int_0^{T}\Big (F^{(i)}(u^m,v^m),w \Big )_{L^2(\Omega)} \;dt \longrightarrow \int_0^{T}  \Big (F^{(i)}(\bar{u},\bar{v}) ,w \Big )_{L^2(\Omega)} \;dt,\]
for $i =1,2$, with
\begin{equation}\label{e:nonlinearities}
 F^{(1)}(u,v) = -uv^2 +(f-1)u \qquad F^{(2)}(u,v) = uv^2 -(f+\kappa) v,
 \end{equation}
  first notice that the only nonlinearity in the expressions for $F^{(i)}$ comes from the term $uv^2$.
  The linear terms can easily be seen to satisfy the above limit thanks to the weak
convergence of the approximating sequences $u^m,v^m$.
To show the result for the terms $uv^2$, we
 use the Dominated convergence theorem. 
 
 Since $w$ is of the form \eqref{e:w}, and we use basis functions, $\phi_k$, which are continuous on our bounded 
 domain $\Omega$, then
 \begin{align*}
 \int_0^{T} \int_\Omega | u^m(v^m)^2 w- \bar{u} \bar{v}^2w| \;dx \;dt 
 & \leq  \|w\|_{L^\infty([0,T] \times \Omega)} \int_0^{T} \int_\Omega | u^m(v^m)^2 - \bar{u} \bar{v}^2| \;dx \;dt. 
\end{align*}
From the discussion in Section \ref{ss:bounds}, the weak limits \eqref{e:c3}, and Aubin's Theorem, we know that we can extract subsequences, $\{u^{m_j}\}$ and $\{v^{m_j}\}$, which converge strongly in $L^2(0,T, L^2(\Omega))$, and sub-subsequences
(which we do not relabel) that converge also point-wise to  $\bar{u}$, $\bar{v}$.  
Hence, $u^{m_j}(v^{m_j})^2$ also converges point-wise to
 $\bar{u} \bar{v}^2 $, for almost all $(x,t) \in \Omega \times [0,T]$ .
At the same time, Proposition \ref{p:infinitybound}
 shows that $\| u^{m_j}(t) \|_{L^\infty(\Omega)}, \| v^{m_j}(t) \|_{L^\infty(\Omega)} < D$ for a.e $t \in [0,T]$ and some positive constant $D$.
As a result, $| u^{m_j}(v^{m_j})^2(x,t)|< D^3$ for a.e. $(t,x) \in [0,T] \times \Omega$, uniformly in $m$.
Since the conditions of the Dominated convergence theorem are satisfied, the convergence written in \eqref{e:nonlinearities} then follows.

We therefore find that
 \begin{align*}
 \int_0^{T} \langle \bar{u}_t, w \rangle + d_u B_{D,N}[\bar{u}, w] \;dt & =  \int_0^{T} (F^{(1)}(\bar{u},\bar{v}),w)_{L^2(\Omega)} \;dt,\\
 \int_0^{T} \langle \bar{v}_t, w \rangle + d_v B_{D,N}[\bar{v}, w] \;dt & =  \int_0^{T} (F^{(2)}(\bar{u},\bar{v}),w)_{L^2(\Omega)} \;dt.
\end{align*}
for all $w \in L^2(0,T,V)$, since the subspace $C^1(0, T,V)$ is a dense subset. Consequently
 \begin{align*}
 \langle \bar{u}_t, w \rangle + d_u B_{D,N}[\bar{u}, w]  & =   (F^{(1)}(\bar{u},\bar{v}),w)_{L^2(\Omega)}, \\
\langle \bar{v}_t, w \rangle + d_v B_{D,N}[\bar{v}, w]  & =   (F^{(2)}(\bar{u},\bar{v}),w)_{L^2(\Omega)} .
\end{align*}
for all $w \in V$ and a.e. $t \in [0,T]$.

Finally, we show that the limits $\bar{u} , \bar{v}$ satisfy the initial conditions of the problem.
Notice that from  the weak convergence \eqref{e:c3}, it follows that $\bar{u},\bar{v} \in  C^1([0,T],L^2(\Omega)) \sim C^1([0,T],V) $.
Then, assuming  $w \in C^1([0,T], V)$ with $w(T) = 0$, we see that the expressions shown above can be written as,
 \begin{align*}
 \int_0^{T} \langle -\bar{u}, w_t \rangle + d_u B_{D,N}[\bar{u}, w]  \;dt& =  \int_0^{T}  (F^{(1)}(\bar{u},\bar{v}),w)_{L^2(\Omega)} \;dt + (\bar{u}(0), w(0))_{L^2(\Omega)},\\
 \int_0^{T} \langle -\bar{v}, w_t \rangle + d_v B_{D,N}[\bar{v}, w] \;dt  & =   \int_0^{T} (F^{(2)}(\bar{u},\bar{v}),w)_{L^2(\Omega)} \;dt + (\bar{v}(0), w(0))_{L^2(\Omega)}.
\end{align*}
Similarly, the sequence of approximations take the form,
 \begin{align*}
 \int_0^{T} - \langle u^m, w_t \rangle + d_u B_{D,N}[u^m, w] \;dt & 
 =  \int_0^{T} (F^{(1)}(u^m,v^m),w)_{L^2(\Omega)} \;dt + (u^m(0), w(0))_{L^2(\Omega)},\\
 \int_0^{T} - \langle v^m, w_t \rangle + d_v B_{D,N}[v^m, w] \;dt & 
 =  \int_0^{T} (F^{(2)}(u^m,v^m),w)_{L^2(\Omega)} \;dt +(v^m(0), w(0))_{L^2(\Omega)}.
\end{align*}
If we denote  the initial conditions for $u$ and $v$ by $g_1, g_2$, then by assumption $u^m(0) \rightharpoonup g_1$
and $v^m(0) \rightharpoonup g_2$ in $V$, or equivalently in $L^2(\Omega)$. 
 Since $w(0)$ is arbitrary, it follows that $\bar{u}(0) = g_1$ and $\bar{v}(0) = g_2$.

\end{proof}

\section{Numerical Illustrations}\label{sec:num}

In this section, we introduce and investigate the convergence properties of two numerical schemes that approximate the Gray-Scott model with Dirichlet and Neumann nonlocal boundary constraints (BC). The time discretization is based on a backward difference formula of order one, and the space discretization relies on the finite element method using Lagrange polynomial functions. We first recall the nonlocal Gray-Scott model considered on a one dimensional domain $\Omega$ over a time interval $[0,T]$:
\begin{align} \label{e:GSnum}
 u_t - d_u \blue{K u} + uv^2 -f (1-u) = & q_u, \\
 \nonumber
 v_t - d_v \blue{K v} - uv^2 + (f+\kappa )v  = & q_v.
\end{align}
We note that, unlike the equations \eqref{e:GS} studied in the rest of the paper, the above equations have been supplemented by two source terms,
$q_u$ and $q_v$, in order to investigate the convergence properties of our schemes using manufactured solutions. 
We remind the reader that in the case of nonlocal Neumann boundary constraints,
the computational domain is extended to $\tilde{\Omega}=\Omega \cup \Omega_0$, which leads us to solve the following problem:
$$ \blue{K u}= q_u, \quad  \blue{K v} = q_v \quad \text{ on } \Omega_0.$$

\subsection{Time marching} 
Let $\tau>0$ be a time step such that $\tau N=T$ with $N\geq 1$ an integer. 
We set $t_n=n\tau$ for any integer $n\geq 0$. For any time dependent function $\phi$, we set $\phi^n=\phi(t_n)$.  We introduce two time discretizations of the Gray-Scott model for nonlocal Dirichlet and Neumann boundary constraints, where first order backward formulas are used to approximate the time derivatives $\partial_t u$ and $\partial_t v$.
The nonlinear terms proportional to $uv^2$ are made explicit using first order time extrapolation. The schemes then read as follows.

\noindent First order scheme for Dirichlet problem on $\tilde{\Omega}=\Omega$ (DBDF1):
\begin{align} \label{e:GS_BDF1}
\frac{u^{n+1}-u^n}{\tau} - d_u \blue{K u^{n+1}} - f(1-u^{n+1}) = - u^n (v^n)^2 + q_u^{n+1}, \\
\nonumber
\frac{v^{n+1}-v^n}{\tau} - d_v \blue{K v^{n+1}}  + (f+\kappa )v^{n+1} = u^n (v^n)^2 + q_v^{n+1}.
\end{align}
\noindent First order scheme for Neumann problem on $\tilde{\Omega}=\Omega \cup \Omega_0$ (NBDF1):
\begin{align} \label{e:GS_BDF}
\frac{u^{n+1}-u^n}{\tau} \mathbbm{1}_\Omega  - d_u \blue{K u^{n+1}} 
- f(1-u^{n+1})\mathbbm{1}_\Omega 
= - u^n (v^n)^2\mathbbm{1}_\Omega + q_u^{n+1}, \\
\nonumber
\frac{v^{n+1}-v^n}{\tau} \mathbbm{1}_\Omega- d_v \blue{K v^{n+1}} 
+ (f+\kappa)v^{n+1}\mathbbm{1}_\Omega 
= u^n (v^n)^2\mathbbm{1}_\Omega + q_v^{n+1},
\end{align}
where the function $\mathbbm{1}_\Omega$ is equal to one in the domain $\Omega$ and zero elsewhere.

\subsection{Space discretization} 
The domain $\tilde{\Omega}$ is discretized using a sequence of conforming shape regular meshes $(E_h)_{h>0}$. Each element $E$ of the mesh $E_h$ is  an interval included in $\tilde{\Omega}$ of size $h_E$. The mesh size, denoted by $h$, is then define as $h:=\max_{E\in E_h} h_E$.
The unknowns $u,v$ are approximated using continuous piece-wise linear Lagrange nodal finite element  $\mathbb{P}_1$. Meaning that $(u,v)$ are approximated in the following finite element space:
\begin{equation} \label{eq:FEM_space}
X_h:= \{ g \in C^0(\tilde{\Omega}) \; | \; g_{|E} \in \mathbb{P}_1(E), \; \forall E \in E_h   \}.
\end{equation}

The fully discretized algorithm reads: Find $(u,v)\in X_h$ such that the following holds for all $g \in X_h$:
\begin{align} \label{e:GS_BDF1_FEM}
\int_{\Omega} \frac{u^{n+1}-u^n}{\tau}  g \;dx
- d_u \int_{\tilde{\Omega}} \blue{K u^{n+1}}  g \;dx
- \int_{\Omega} f(1-u^{n+1})  g \;dx
=
-\int_{\Omega}  u^n (v^n)^2 g \;dx
+\int_{\tilde{\Omega}}  q_u^{n+1} g\;dx , \\
\nonumber
\int_{\Omega}  \frac{v^{n+1}-v^n}{\tau}  g \;dx
- d_v \int_{\tilde{\Omega}}  \blue{K v^{n+1}}   g \;dx
+ \int_{\Omega} (f+k)v^{n+1}  g \;dx
= 
\int_{\Omega} u^n (v^n)^2  g \;dx
+\int_{\tilde{\Omega}}  q_v^{n+1} g \;dx,
\end{align}
where we recall that $\Omega=\tilde{\Omega}$ in the case of nonlocal Dirichlet boundary constraints. 
As the $L^2$ error is bounded by $\mathcal{O}(\tau+h^2)$, 
under classic CFL condition (i.e. $\tau \sim h$),  the above 
algorithm is expected to converge with order one. We refer to \cite{brenner2008mathematical,ern2004theory} 
for more information on finite element methods. 
The following sections investigate the convergence 
properties of the above algorithm when using nonlocal Dirichlet 
and Neumann boundary constraints.

\subsection{Manufactured tests with nonlocal Dirichlet boundary conditions}
We study the convergence properties in $L^2$-norm of the above algorithm with nonlocal Dirichlet boundary constraints.
The computational domain is set to $\Omega=[0,1]$ and the final time is set to $T=1$. The manufactured solutions are defined by:
$$u(x,t) =x^2 \cos(\pi x/2) e^{-x + x^2 -t}, \qquad  v(x,t) = \sin(x) (1-x) e^{-x+x^2} (10+xt)\cos(t^2)/300.$$ 
The kernel function is defined as follows
$$\gamma(x,y) = e^{-(x-y)^2}.$$
The problems parameters are respectively set to $d_u=0.05, d_v=0.01, k=2$ and $f=6$. 
The source terms $q_u$ and $q_v$ are computed accordingly. We perform a series of tests on five uniform grids 
with respective mesh size $h$ set to $0.05, 0.025, 0.0125, 0.00625$, and $0.003125$. The time step is set to $\tau=2h$.
The relative errors in the $L^2$-norm obtained  with the DBDF1 algorithm are displayed in 
Table~\ref{tab:num_test_1_BDF1}. We recover a rate of convergence equal to one, which is compatible with the
expected theoretical rate $\mathcal{O}(\tau+h^2)$. 
\begin{table}[ht]
\centering
\begin{tabular}{|c|c|c|c|c|c|c|} \hline  
 \multicolumn{3}{|c|}{$L^2$-norm of error}   & \multicolumn{2}{|c|}{$u$}
 & \multicolumn{2}{|c|}{$v$} \\ \hline \hline
&  $h$  & $n_{df}$ & Error  & Rate  & Error  & Rate \\ \hline
 \multirow{5}{*}{$\tau= 2 h$}  
 & $ 0.05$& 20 			& 1.43E-2 	& - 		& 3.72E-2		&  -    \\ \cline{2-7}
& $ 0.025$& 40 		&  6.00E-3	& 1.25	& 	1.96E-2 	&  0.92  \\ \cline{2-7}
&$0.0125$ & 80 		& 2.70E-3		& 	1.15	& 	1.01E-2 	& 0.96 \\  \cline{2-7}
&$0.00625$ & 160 	& 1.30E-3		&  1.05	& 5.10E-3		&  0.99 \\  \cline{2-7}
&$0.003125$ & 320 	& 6.29E-4		&  1.05	& 2.60E-3		& 0.97  \\ 
 \hline
\end{tabular}
\caption{Convergence tests for BDF1 algorithm with Dirichlet BC. The mesh size is denoted by $h$, the time step by $\tau$, and the degrees
of freedom per unknown by $n_{df}$.}
\label{tab:num_test_1_BDF1}
\end{table}

\subsection{Manufactured tests with nonlocal Neumann boundary conditions}
We now study the convergence properties of the algorithm for a problem with nonlocal Neumann boundary constraints, where the kernel function $\gamma$ has a finite horizon $R=2$.
The problem domain, solutions, kernel and parameters are defined as follows:
\begin{itemize}
\item $\Omega=[-8,8]$ and  $T=1$.
\item $\Omega_0=[-10,-8]\cup[8,10]$.
\item Kernel: $\gamma(x,y) = 0.5 e^{|x-y|} \mathbbm{1}_{|x-y|\leq 2}.$
\item Solution $u(x,t) = (x-10)(x+10) \cos(t)/100. $
\item Solution $v(x,t) =  \sin(\pi x/10) e^{-t^2}/2.$
\item Parameters: $d_u=0.05, \quad d_v=0.01, \quad k=3, \quad f=2$.
\end{itemize}
We perform a series of tests using the NBDF1 algorithm and five uniform grids with a mesh size $h$ varying from $0.5$ to $0.03125$, and
set the time step to $\tau=h/5$. As shown in table \ref{tab:num_test_2_BDF1}, the results
are consistent with a theoretical convergence order of  $\mathcal{O}(\tau+h^2)$. We note that the number of degrees of freedom
reported in the table \ref{tab:num_test_2_BDF1} represents the number of degrees of 
freedom on the computational domain $\Omega\cup\Omega_0$.  
\begin{table}[ht]
\centering
\begin{tabular}{|c|c|c|c|c|c|c|} \hline  
 \multicolumn{3}{|c|}{$L^2$-norm of error}   & \multicolumn{2}{|c|}{$u$}
 & \multicolumn{2}{|c|}{$v$} \\ \hline \hline
&  $h$  & $n_{df}$ & Error  & Rate  & Error  & Rate \\ \hline
 \multirow{5}{*}{$\tau= h/5$}  
& $ 0.5$& 40 		& 2.40E-3	& - 	& 4.30E-3	&  -    \\ \cline{2-7}
& $ 0.25$& 80 		& 1.40E-3	& 0.78	& 2.30E-3	& 0.90   \\ \cline{2-7}
&$0.125$ & 160 	& 7.25E-4	& 0.95	& 1.20E-3	& 0.94\\  \cline{2-7}
&$0.0625$ & 320 	& 3.72E-4	&  0.96	& 6.09E-4	&  0.98 \\  \cline{2-7}
&$0.03125$ & 640 	& 1.89E-4	&  0.98	& 3.08E-4	& 0.98 \\ 
 \hline
\end{tabular}
\caption{Convergence tests for BDF1 algorithm with Neumann BC. The mesh size is denoted by $h$, the time step by $\tau$ and the degrees
of freedom per unknown by $n_{df}$.}
\label{tab:num_test_2_BDF1}
\end{table}

\subsection{Toward pattern formation}
As mentioned in the introduction, the local Gray-Scott model is known for generating a wealth of interesting
spatio-temporal structures, \cite{pearson1993}. In the one-dimensional case, one can prove existence 
of periodic patterns as well as pulse and multi-pulse solutions (see \cite{doelman1997, morgan2000} for the respective proofs).
We use this fact as motivation for investigating
the emergence of patterns in the nonlocal Gray-Scott model.  The numerical schemes 
constructed here lend themselves well for investigating pulse and multi-pulse solutions. For
simplicity, we only focus on pulse solutions and we consider only homogeneous Neumann boundary constraints.
 Our goal is to numerically study 
the effects of nonlocal diffusion on the formation of these patterns.

To run the simulations we use as
 computational domain  $\Omega=[-40,40]$ and outer domain $\Omega_0 = [-45,-40] \cup[ 40, 45]$.
 For the kernel we consider the exponential function,
 $$\gamma(x,y) = A e^{-a|x-y|} \mathbbm{1}_{|x-y|\leq R}$$
  where  $R$ is the kernel's horizon, $a$ is a dispersive range and the parameter $A$ is picked so that the kernel's average is equal to one, i.e. $A=\dfrac{a}{2(1-e^{-aR})}.$
Physical parameters are chosen  within the range of values for which pulse solutions are known 
to exist in the 'local' Gray-Scott model. More precisely, we take
$d_u=1, \quad d_v=0.01, \quad k=0.0977,$ and $ f=0.01$.
We also set the horizon to $R=5$, and take as initial conditions 
$$u(x,0) = 1-0.3 e^{-10x^2}, \quad v(x,0) =  e^{-10 x^2}.$$

 In order to compare our results to those of the local model, 
we rescale the \blue{map $Ku$} by a factor $C$, such that the nonlocal diffusion operator $CKu$ tends to the Laplacian as the parameter $a$ tends to infinity. This leads us to define $C$ as follows:
$$C=\frac{a^3}{A(2-exp^{-aR}(1+aR(2+aR)))}.$$

To find pulse solutions, which are steady-state solutions, we set the mesh size $h = 0.05$, the time step $\tau =0.01$, and we run the scheme until the relative error between two successive iterations is les than a tolerance of $tol = 10^{-5}$.
Figure \ref{fig:pulse_neumann} shows the profile of the solution, $(u,v)$, for various values of the parameter $a$.
We find that  pulse solutions become narrower as the value of the parameter $a$ decreases. Physically, this corresponds to an increase in the dispersive range. Moreover, when $a$ is too small the peak of the pulse caves in, giving us a `batman-like' profile.
On the other hand, as the value of the parameter $a$ increases, the pulse profile resembles more and more that of the `local' Gray-Scott model, as expected.

 \begin{figure}[t] 
   \centering
  \subfigure[]{ \includegraphics[width=0.45\textwidth]{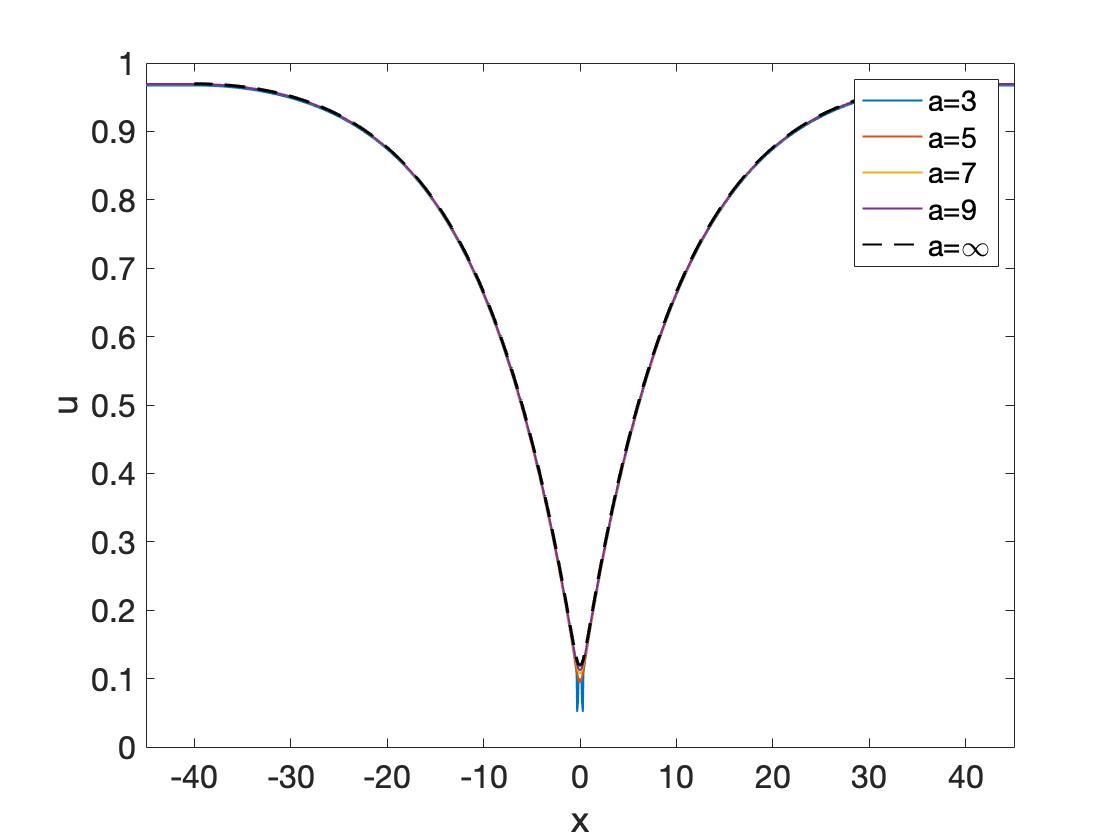} }
   \subfigure[]{\includegraphics[width=0.45\textwidth]{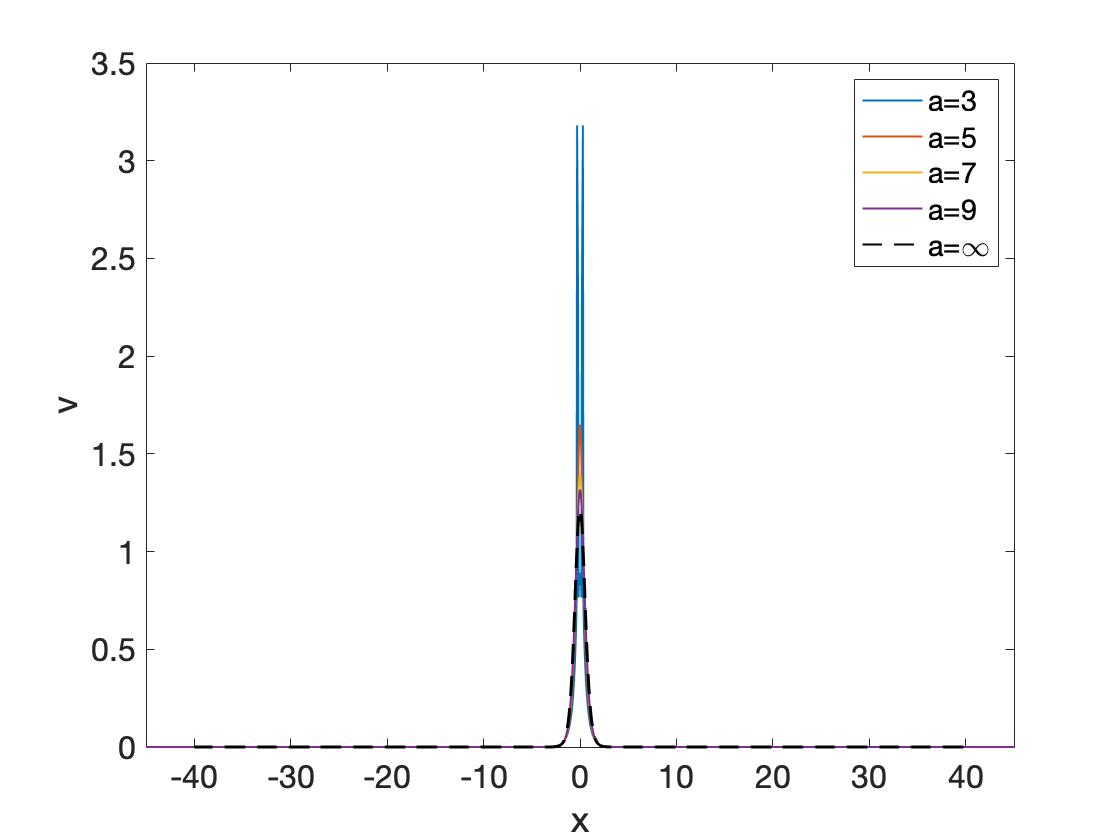} }
  \subfigure[]{ \includegraphics[width=0.45\textwidth]{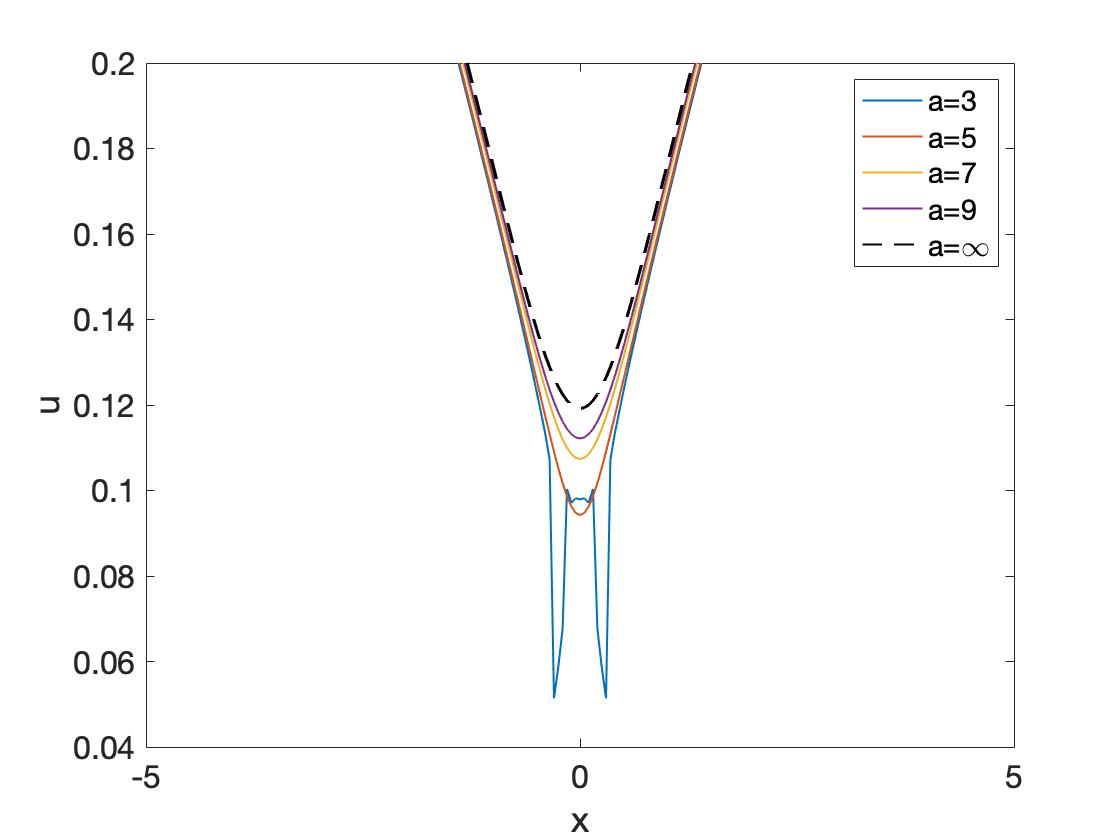} }
   \subfigure[]{\includegraphics[width=0.45\textwidth]{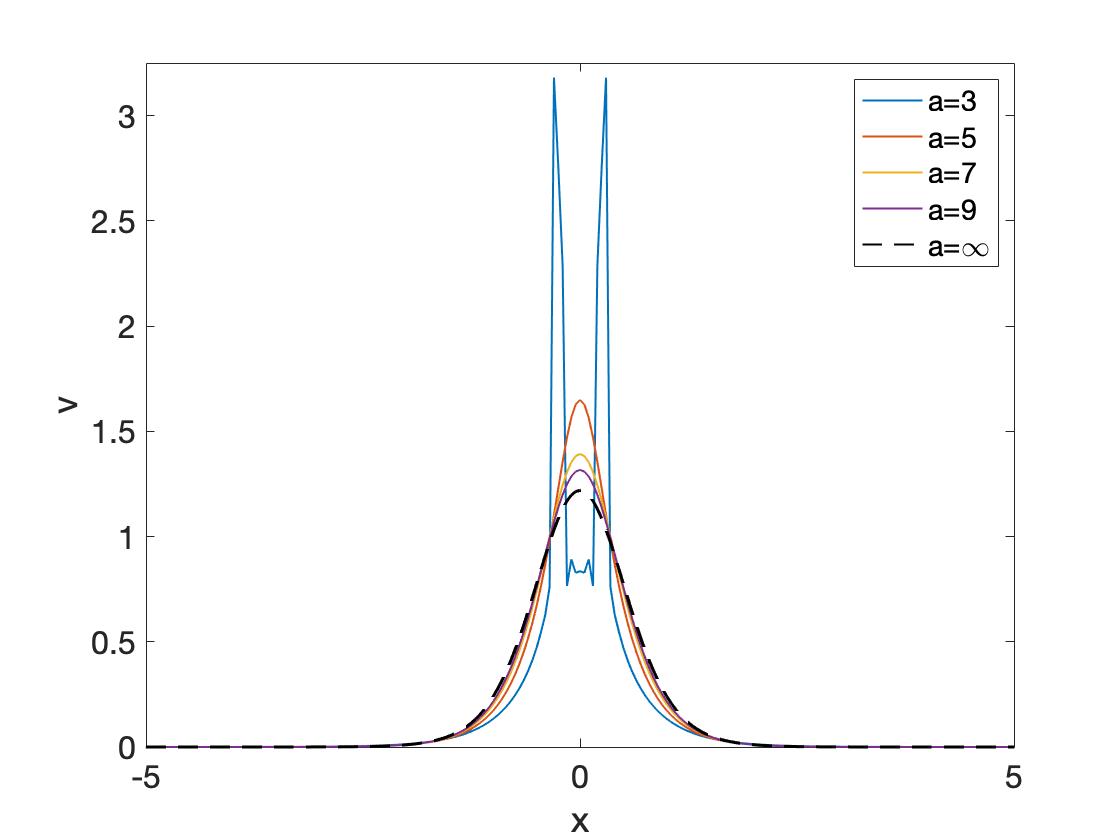} }
   \caption{Pulse solutions for the nonlocal Gray-Scott model for different values of the dispersive range 
   $a=\{ 3,5,7,9\}$ (solid lines) and for the local Gray-Scott model $a= \infty$ (dashed line). Figures a) and b) depict the $u$ and $v$ profiles for the solution, respectively. Figures c) and d) zoom in into a neighborhood of the origin. Curves that are closer to dashed line correspond to larger values of $a$. Other parameters used are specified in the text.}
   \label{fig:pulse_neumann}
\end{figure}

Thus, our results show that there is a critical value of the parameter $a$ that signals a transition from pulse solutions
to a different type of pattern. While our numerical results suggest that these new patterns exhibit a profile with two peaks, it is not clear if this steady-state is the result of the boundary conditions used, or if one could find these type of solutions when considering the problem on the whole real line.
Indeed, this double-peaked pattern could be the result of the single pulse solutions beginning to split into
two, but not having enough room to separate. 
Preliminary results show that when the computational domain is extended  to $\Omega = [-50,50]$, the solution
still converges to the same profile near the center of the domain, see Figure \ref{fig:aa_3}. 
\blue{Similarly, when refining the mesh from $h =0.05$ to $h= 0.025$ and then to $h=0.0125$, we observed that the solution converged to the same profile as the  one shown in the Figure \ref{fig:aa_3}.}
This suggest that the `batman'-like pattern is indeed a steady state solution. 

\begin{figure}[ht] 
   \centering
  \subfigure[]{ \includegraphics[width=0.45\textwidth]{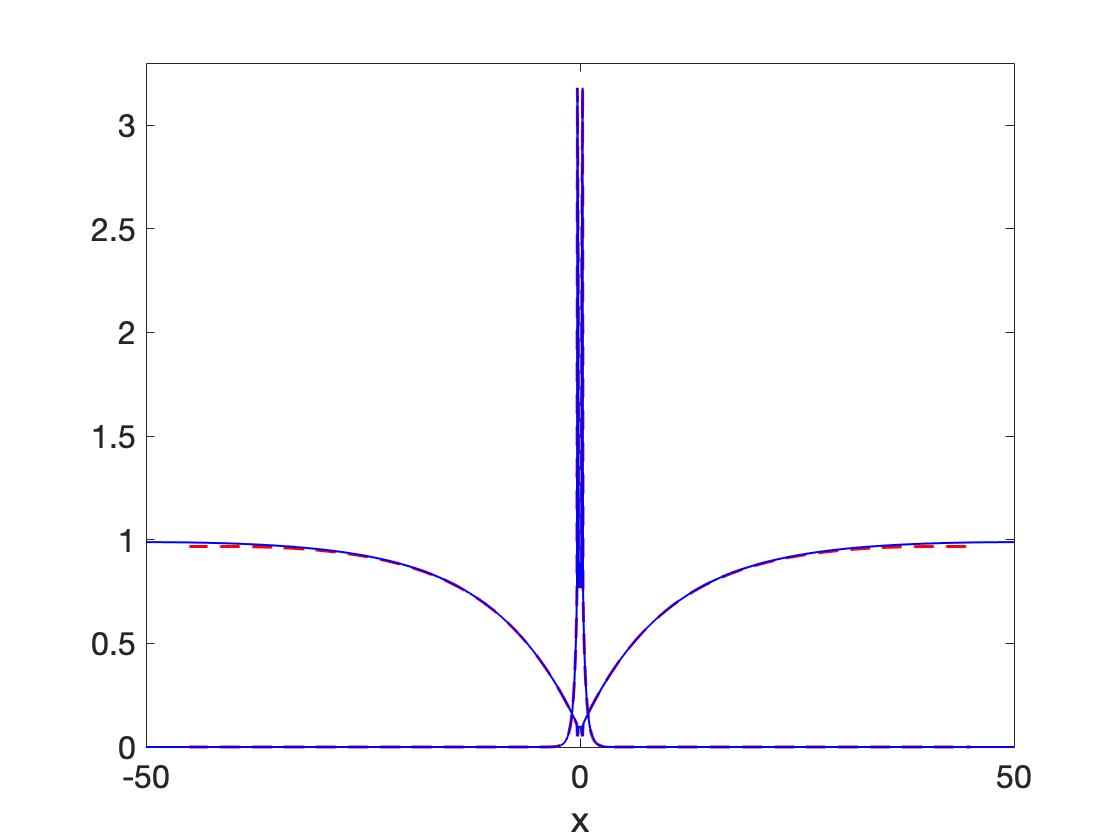} }
   \subfigure[]{\includegraphics[width=0.45\textwidth]{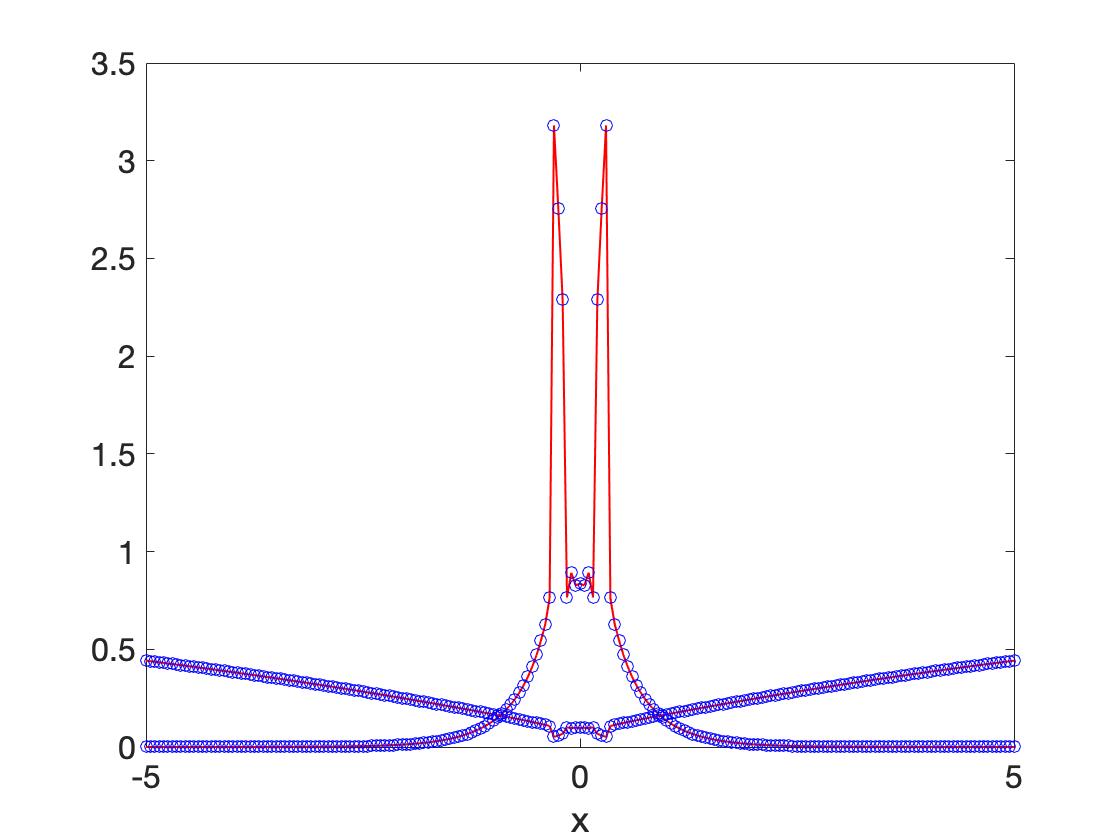} }
  \caption{Pulse solutions for the nonlocal Gray-Scott model with $ a= 3$.
   Figure a) depicts the $u$ and $v$ profiles of the solution when $\Omega = [-40,40]$ (dashed-red line) and $\Omega = [-50,50]$ (solid-blue line). Figures b) zooms in into a neighborhood of the origin, where the solid-red line corresponds to $\Omega = [-40,40]$    and the blue circles to $\Omega= [-50,50]$. }
   \label{fig:aa_3}
\end{figure}

\section{Discussion}
\label{s:discussion}

In this paper we studied a nonlocal Gray-Scott model 
where diffusion is replaced by long-range dispersal.
To describe  these nonlocal interactions we considered 
\blue{a nonlocal diffusion operator $K$}
that uses  an $L^1$ symmetric, positive kernel.
We looked at the equations posed on a bounded domain with
nonlocal homogeneous Dirichlet and Neumann boundary constraints.
In the Dirichlet case we used a spatially extended kernel,
while in the Neumann case we assumed the kernel has compact support.
We proved the existence of weak solutions and constructed
 numerical schemes for both types of boundary conditions.
Finally, we tested our algorithm by investigating the role that nonlocal diffusion has
in shaping pulse solutions in the nonlocal Gray-Scott model.

To our knowledge, our results represent the first proof for the existence of small-time
weak solutions for a system of integro-differential equations with cubic nonlinearities and nonlocal diffusion.
Because the bilinear form generated by the nonlocal operator has as its domain a
Sobolev space with little regularity, it is not possible to immediately apply  the 'standard' Galerkin approach
that is generally used
to prove existence of weak solutions of nonlinear parabolic equations.
To overcome this difficulty, the method presented here relied on extending the system of equations 
to include expressions for two additional unknowns. We showed that at the level of the Galerkin approximation
these  unknowns are the same as the derivatives of the original variables, but only on the domain of interest, $\Omega$, and not necessarily on the extended domain $\tilde{\Omega}$.
This allowed us to obtain further regularity for the solutions \blue{on $\Omega$}, which we could then use
in combination with Aubin's compactness theorem to prove weak convergence of the nonlinear terms.

To construct our algorithms we used finite elements for the space discretization and an implicit Euler scheme for the time stepping.
The proposed numerical method follows the approach taken by Du et al \cite{du2012, du2013},
where a notion of flux that is compatible with the nonlocal operator is defined. This
in turn allows one to properly define nonlocal Dirichlet and Neumann boundary constraints.
More precisely, to implement homogeneous nonlocal Dirichlet constraints we set the value of the unknowns to zero 
outside the computational domain, while for homogeneous Neumann constraints we required
that the unknowns, $u,v$, satisfy
as $\blue{K u}=0$ or $\blue{K v}=0$ on the outer domain $\Omega_0$.
To test the numerical schemes we ran non-trivial numerical experiments and showed that
both algorithms have an order of convergence equal to one.

Our motivation for considering the nonlocal Gray-Scott equations was two fold. 
First, the original Gray-Scott equations are known to give rise to interesting 
patterns that include periodic structures, pulse solutions, multi-pulse solutions, 
and other interesting spatio-temporal patterns. Our long term goal here is to understand
how nonlocal diffusion and the associated nonlocal boundary constraints affect the formation of these structures. 
In this paper we investigated only pulse solutions when the problem is assumed to have Neumann boundary constraints.
Our numerical results showed that the dispersive range, $a$, of the nonlocal operator affects
the shape of the pattern. As the value of $a$ decreases (corresponding to an 
increase in the dispersive range of the operator), solutions exhibit sharper, taller peaks.
Our simulations also suggest that there is a critical value of the parameter $a= a_c$ 
such that if $a$ falls below this value, the solution is no longer a single pulse, but 
exhibits instead a `batman' like profile. This pattern seems to be a new steady state solution,
not found in the local case. However, a detailed analysis of the nonlocal equations is necessary to prove 
the existence of these structures.

{ The above discussion suggests a number of future directions to consider.
Because the discretization of the nonlocal operator results in a dense matrix,
numerical simulations can take up significant time.
This can be partially remedied by
implementing adaptive mesh algorithms or by using nonuniform mesh. Because pulse solutions
are characterized by two different spatial scales, with wide regions where the pattern
changes slightly mixed with small  areas where the solution changes rather quickly, by picking a suitable adaptive mesh these features can be exploited to reduce the system size.

 In addition, because pulse solutions, as well as other
spatio-temporal patterns, constitute steady states of the equation, finding these solutions
by evolving the system of equations can take very long computational times.
 Therefore, the approach used here
 is not the most efficient  method for investigating
bifurcations as parameters vary. 
Future work should focus on developing continuation methods for nonlocal 
problems that account as well for nonlocal boundary constraints. 
These methods, which are based on the implicit function theorem,
are specifically designed to track steady-state solutions of ordinary differential equations
as parameters are varied. Examples of software packages designed for this purpose include AUTO-07P \cite{doedel2007}, and MatCon \cite{matcont}.
In the past decades this approach has also been adapted for finding steady-states of hyperbolic partial differential equations \cite{pde2path}, and for integro-differential equations with local or periodic boundary conditions \cite{avitabile2020, helmut2020, rankin2014, sherratt2019}.
However, to our knowledge there are no continuation methods for nonlocal operators with 
the types of nonlocal boundary constraints considered here.
 We intend to fill this void in future work.

Our second motivation for considering the Gray-Scott equations
comes from their connection to
the generalized Klausmeier model \cite{doelman2013}, a system of reaction diffusion equations
that is used to describe vegetation patterns.
In particular, it is known that by a suitable change of coordinates, the spatially homogeneous steady-states of the Gray-Scott 
equations can be mapped to those of the Klausmeier model. Both systems also give rise to similar patterns. Indeed the extended generalized Klausmeier model
is able to support periodic patterns \cite{doelman2013} as well as pulse and multi-pulse solutions \cite{sewalt2017}.
However, as mentioned in the introduction, a disadvantage of these equations is their use 
of the Laplace operator to model seed dispersal.
While this modeling  choice  still provides valuable insight into the formation of vegetation patterns, 
  it is now widely accepted that nonlocal convolution operators provide a better description
  of  dispersal effects in biological applications
  \cite{bullock2017}. Indeed,  a number of vegetation models 
  use this assumption to describe seed dispersal when studying
  pattern formation, \cite{pueyo2008, pueyo2010, baudena2013}, or when looking at the spread, or invasion, of plants
  \cite{allen1996, powell2004}. 
  It would be interesting to  study how nonlocal boundary constraints affect pattern formation in these
  systems. We plan to address this challenge in future work.
  }


\section{Appendix}
A proof of the following result can be found in \cite{aubin1963}
\begin{Theorem}[Aubin's Compactness Theorem]\label{t:Aubin}
Let $X_0,X, X_1$ be three Banach spaces such that $$X_0 \subset X \subset X_1,$$ where the injections are continuous,
$X_0$ and $X_1$ are both reflexive, and the injection $X_0 \longrightarrow X$ is compact.
Let $T>0$ be a fixed finite number, let $1< p,q< \infty$, and define
\[ \mathcal{Y} :=\{ v \in L^p(0,T;X_0): v' =\frac{dv}{dt} \in L^q(0,T;X_1)\}.\]
Then, $\mathcal{Y}$ is a Banach space when it is equipped with the norm
\[ \|v\|_\mathcal{Y} = \| v\|_{L^p(0,T; X_0)} + \| v'\|_{L^q(0,T; X_1)}.\]
Furthermore, the injection $\mathcal{Y}$ into $L^p(0,T;X)$ is compact.
\end{Theorem}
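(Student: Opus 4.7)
The plan is to prove the two claims separately: first the Banach space structure of $\mathcal{Y}$, which is routine, and then the compact embedding $\mathcal{Y} \hookrightarrow L^p(0,T;X)$, which is the substantial content. For completeness I would take a Cauchy sequence $\{v_n\}$ in $\mathcal{Y}$ and use completeness of $L^p(0,T;X_0)$ and $L^q(0,T;X_1)$ to obtain limits $v$ and $w$. The identification $w = v'$ would follow by testing against $\phi \in C_c^\infty(0,T)$ and passing to the limit in the distributional identity $\int_0^T v_n' \phi\,dt = -\int_0^T v_n \phi'\,dt$, where the continuous embedding $X_0 \hookrightarrow X_1$ makes both sides well-posed.

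For the compactness statement, the strategy rests on two ingredients. The first is Ehrling's lemma: because $X_0 \hookrightarrow X$ is compact and $X \hookrightarrow X_1$ is continuous, for every $\eta > 0$ there exists $C_\eta > 0$ such that $\|u\|_X \leq \eta \|u\|_{X_0} + C_\eta \|u\|_{X_1}$ for all $u \in X_0$. I would prove this by the standard compactness-contradiction argument. Given a bounded sequence $\{v_n\} \subset \mathcal{Y}$, reflexivity of $X_0$ and $X_1$ together with $1 < p,q < \infty$ yields a subsequence (not relabeled) with $v_n \rightharpoonup v$ in $L^p(0,T;X_0)$ and $v_n' \rightharpoonup v'$ in $L^q(0,T;X_1)$. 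Applying Ehrling pointwise in $t$ to $w_n := v_n - v$, raising to the $p$-th power, and integrating gives
\[
\|w_n\|_{L^p(0,T;X)}^p \leq 2^{p-1} \eta^p \|w_n\|_{L^p(0,T;X_0)}^p + 2^{p-1} C_\eta^p \|w_n\|_{L^p(0,T;X_1)}^p .
\]
Since $\|w_n\|_{L^p(0,T;X_0)}$ is uniformly bounded and $\eta$ is arbitrary, the proof reduces to establishing strong convergence $w_n \to 0$ in $L^p(0,T;X_1)$.

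The second ingredient is a vector-valued Fr\'echet--Kolmogorov compactness criterion. Equicontinuity of time translates comes directly from the bound on $v_n'$: writing $v_n(t+h) - v_n(t) = \int_t^{t+h} v_n'(s)\,ds$ and applying H\"older's inequality in $s$ yields
\[
\|v_n(\cdot + h) - v_n(\cdot)\|_{L^p(0,T-h;X_1)} \leq C\, h^{1-1/q}
\]
uniformly in $n$. Combined with pointwise (in suitable averaged sense, see below) relative compactness in $X_1$ --- which follows from compactness of the composite embedding $X_0 \hookrightarrow X \hookrightarrow X_1$ together with the $L^p(0,T;X_0)$-bound on the $v_n$ --- this delivers a subsequence converging strongly in $L^p(0,T;X_1)$. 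Uniqueness of the weak limit forces the strong limit to equal $v$, and combined with the Ehrling reduction above we conclude $v_n \to v$ in $L^p(0,T;X)$.

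The main obstacle lies in making rigorous the "pointwise in $t$" control: the functions $v_n$ are only defined almost everywhere as maps into $X_0$, and the bound on $v_n'$ only controls $v_n(t)$ in $X_1$, not $X_0$. The standard remedy is to work with Steklov averages $v_n^h(t) := h^{-1}\int_t^{t+h} v_n(s)\,ds$: for each fixed $h > 0$ these take values in $X_0$ with norm controlled by $\|v_n\|_{L^p(0,T;X_0)}$, are continuous in $t$, and so by Arzel\`a--Ascoli applied in $X_1$ (using compactness of $X_0 \hookrightarrow X_1$) possess convergent subsequences in $C([0,T-h]; X_1)$, hence in $L^p(0,T-h;X_1)$. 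The translation estimate above further shows $v_n^h \to v_n$ in $L^p(0,T;X_1)$ uniformly in $n$ as $h \to 0$. A diagonal extraction over a sequence $h_k \to 0$ then yields the desired strongly convergent subsequence, completing the proof.
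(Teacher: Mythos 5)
The paper does not actually prove this statement: Theorem~\ref{t:Aubin} appears in the Appendix as a quoted classical result with a pointer to \cite{aubin1963}, so there is no in-paper argument to compare against. Your sketch reconstructs the standard proof of the Aubin--Lions compactness theorem, and it is correct in outline: Ehrling's interpolation inequality $\|u\|_{X}\le \eta\|u\|_{X_0}+C_\eta\|u\|_{X_1}$ reduces the problem to strong convergence in $L^p(0,T;X_1)$, and that convergence is obtained from the uniform translation estimate $\|v_n(\cdot+h)-v_n\|_{L^p(0,T-h;X_1)}\le C\,h^{1-1/q}$ together with Arzel\`a--Ascoli applied to the Steklov averages $v_n^h$, which for fixed $h$ are uniformly bounded in $X_0$ (hence pointwise relatively compact in $X_1$) and equicontinuous. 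Two small points that a complete writeup should not skip: first, the passage from convergence of $v_n^{h}$ on $[0,T-h]$ to convergence of $v_n$ on all of $(0,T)$ requires controlling the tail interval $(T-h,T)$; this follows because each $v_n$ has a continuous $X_1$-valued representative that is uniformly bounded (average the identity $v_n(t)=v_n(s)+\int_s^t v_n'\,d\sigma$ over $s\in(0,T)$) and uniformly H\"older of exponent $1-1/q$, so the tail contributes $O(h^{1/p})$ uniformly in $n$. Second, the identification of the strong $L^p(0,T;X_1)$ limit with the weak $L^p(0,T;X_0)$ limit $v$ does follow from uniqueness of weak limits, as you assert, since strong convergence implies weak convergence in the larger space. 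With these details filled in, your argument is a complete and standard proof; it is more self-contained than what the paper offers, at the cost of length.
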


\bibliographystyle{plain}
\bibliography{nonlocal_bib_all}

\end{document}